\documentclass[reqno,12pt]{amsart}
 \usepackage{amssymb}
 \usepackage{latexsym}
 \usepackage{graphicx}
 \usepackage{multicol}
 \usepackage{mathrsfs}
 \usepackage{young}
 \usepackage[vcentermath,enableskew,stdtext]{youngtab}
 \usepackage[left=1.11in,right=1.11in,top=1.1in,bottom=1.14in]{geometry}
 \usepackage[all]{xy}
    \SelectTips{cm}{10}     
    \everyxy={<2.5em,0em>:} 
 \usepackage{fancyhdr}      
 \linespread{1}
 \usepackage{multicol}
 \usepackage{multirow}
 \usepackage{cases}
 \usepackage{stmaryrd, bm, enumitem}



\newcounter{ctr}
\theoremstyle{plain}
\newtheorem{theorem}{Theorem}[section]

\newtheorem*{lemma*}{Lemma}
\newtheorem{lemma}[theorem]{Lemma}
\newtheorem{corollary}[theorem]{Corollary}
\newtheorem{proposition}[theorem]{Proposition}

\theoremstyle{definition}
\newtheorem{definition}[theorem]{Definition}

\newtheorem{remark}[theorem]{Remark}
\newtheorem{example}[theorem]{Example}




\newcommand{\aut}{\text{\rm Aut}\,}

\newcommand{\Cbasis}{\ensuremath{\mathcal{C}}}
\newcommand{\CC}{\ensuremath{\mathbb{C}}}

\newcommand{\End}{\text{\rm End}}

\newcommand{\FF}{\ensuremath{\mathbb{F}}}

\renewcommand{\H}{\ensuremath{\mathscr{H}}}

\renewcommand{\hom}{\text{\rm Hom}}

\newcommand{\idelm}{\ensuremath{id}}

\newcommand{\mat}[1]{\ensuremath{\left( %
        \begin{array}{cccccccccccccccccccccccccccc} #1 \end{array}\right)}}

\newcommand{\QQ}{\ensuremath{\mathbb{Q}}}
\DeclareMathOperator{\rad}{rad}

\newcommand{\RR}{\ensuremath{\mathbb{R}}}

\newcommand{\ZZ}{\ensuremath{\mathbb{Z}}}

\newcommand{\be}{\begin{equation}}
\newcommand{\ee}{\end{equation}}
\newcommand{\Res}{\text{\rm Res}}
\renewcommand{\S}{\ensuremath{\mathcal{S}}}
\newcommand{\tsr}{\ensuremath{\otimes}}
\newcommand{\C}{\ensuremath{C^{\prime}}} 

\newcommand{\nsbr}[1]{{\ensuremath{\check{#1}}}}

\renewcommand{\u}{\ensuremath{u}}  
\newcommand{\ui}{\ensuremath{u^{-1}}} 

\newcommand{\Tab}{\ensuremath{\mathcal{T}}}



\renewcommand{\ng}{\text{-}}

\newcommand{\pnp}{\ensuremath{P \neq NP}}
\newcommand{\pvnp}{\ensuremath{P \text{ vs. } NP}}

\newcommand{\nsH}{\nsbr{\H}}

\newcommand{\two}{[2]}
\newcommand{\sP}{\mathcal{P}}
\newcommand{\sQ}{\mathcal{Q}}
\newcommand{\field}{\ensuremath{K}}

\newcommand{\f}{\ensuremath{f}}


\newcommand{\thetaalg}{\mathscr{A}_\theta}
\newcommand{\dualtheta}{\ensuremath{\#}}
\newcommand{\dualone}{\ensuremath{\diamond}}
\newcommand{\compositions}[2]{\ensuremath{\text{SComp}_{#1,#2}}}

\newlength{\cellsize}
\cellsize=2.5ex
\newcommand\tableau[1]{
\vcenter{
\let\\=\cr
\baselineskip=-16000pt \lineskiplimit=16000pt \lineskip=0pt
\halign{&\tableaucell{##}\cr#1\crcr}}}


\newcommand{\tableaucell}[1]{{%
\def \arg{#1}\def \void{}%
\ifx \void \arg
\vbox to \cellsize{\vfil \hrule width \cellsize height 0pt}%
\else \unitlength=\cellsize
\begin{picture}(1,1)
\put(0,0){\makebox(1,1){$#1$}} \put(0,0){\line(1,0){1}}
\put(0,1){\line(1,0){1}} \put(0,0){\line(0,1){1}}
\put(1,0){\line(0,1){1}}
\end{picture}%
\fi}}

\author{Jonah Blasiak}\thanks{The author was supported by an NSF postdoctoral fellowship.}
\title{Nonstandard braid relations and Chebyshev polynomials}

\address{Department of Mathematics, Drexel University, Philadelphia, PA 19104}
\email{jblasiak@gmail.com} \keywords{Hecke algebra, Chebyshev polynomials, cellular algebra, Kronecker problem, geometric complexity theory}

\begin{document}
\maketitle
\begin{abstract}
A fundamental open problem in algebraic combinatorics is to find a positive combinatorial formula for Kronecker coefficients, which are multiplicities of the decomposition of the tensor product of two  $\S_r$-irreducibles into irreducibles.  Mulmuley and Sohoni attempt to solve this problem using canonical basis theory, by first constructing a nonstandard Hecke algebra $B_r$, which, though not a Hopf algebra, is a $\u$-analogue of the Hopf algebra $\CC \S_r$ in some sense (where $\u$ is the Hecke algebra parameter).  For  $r=3,$ we study this Hopf-like structure in detail. We define a nonstandard
Hecke algebra $\nsH^{(k)}_3 \subseteq \H_3^{\tsr k}$, determine its irreducible representations over $\QQ(\u)$, and show that it has a presentation with a nonstandard braid relation that involves Chebyshev polynomials evaluated at  $\frac{1}{\u + \ui}$.  We generalize this to Hecke algebras of dihedral groups.  We go on to show that these nonstandard Hecke algebras have bases similar to the Kazhdan-Lusztig basis of $\H_3$ and are cellular algebras in the sense of Graham and Lehrer.
\end{abstract}

\section{Introduction}
\label{s Introduction}
Let  $\S_r$ denote the symmetric group on $r$ letters and let $M_\nu$ be the $\S_r$-irreducible corresponding to the partition $\nu$.
The Kronecker coefficient $g_{\lambda \mu \nu}$ is the multiplicity of $M_\nu$ in the tensor product $M_\lambda \otimes M_\mu$.  A fundamental and difficult open problem in algebraic combinatorics is to find a positive combinatorial formula for these coefficients. Although this problem has been studied since the early twentieth century, the general case still seems out of reach. In the last ten years this problem has seen a resurgence of effort, perhaps because of its recently discovered connections to quantum information theory \cite{BWZ} and complexity theory \cite{GCT4}.
Much of the recent progress has been  for Kronecker coefficients indexed by two two-row shapes, i.e., when $\lambda$ and $\mu$ have two rows: an explicit, though not positive, formula was given by Remmel and Whitehead in \cite{RW} and further improvements were made by Rosas \cite{Rosas} and Briand-Orellana-Rosas \cite{BOR}.  Briand-Orellana-Rosas \cite{BOR, BOR2} and Ballantine-Orellana \cite{BO} have also made progress on the special case of reduced Kronecker coefficients, sometimes called the stable limit, in which the first part of the partitions $\lambda,\mu,\nu$ is large.

In a series of recent papers, Mulmuley, in part with Sohoni and Narayanan, describes an approach to $\pvnp$ and related lower bound problems in complexity theory using algebraic geometry and representation theory, termed geometric complexity theory.  Understanding Kronecker coefficients, particularly, having a good rule for when they are zero, is critical to their plan. In fact, Mulmuley gives a substantial informal argument claiming that if certain difficult separation conjectures like $\pnp$ are  true, then there is a $\# P$ formula for Kronecker coefficients and a polynomial time algorithm that determines whether a Kronecker coefficient is nonzero \cite{GCT6}.  Thus from the complexity-theoretic perspective, there is hope that Kronecker coefficients will have nice formulae like those for Littlewood-Richardson coefficients, though experience suggests they will be much harder.

A useful perspective for studying tensor products of  $\S_r$-modules is to endow the group algebra  $\ZZ\S_r$ with the structure of a Hopf algebra.  The coproduct is
$\Delta :\ZZ \S_r \to \ZZ \S_r \tsr \ZZ \S_r$,  $w \mapsto w \tsr w$, and the $\ZZ\S_r$-module  $M_\lambda \tsr M_\mu$ is then defined to be the restriction of the $\ZZ\S_r \tsr \ZZ \S_r$-module  $M_\lambda \boxtimes M_\mu$ along  $\Delta$.

In \cite{GCT4}, Mulmuley and Sohoni attempt to use canonical bases to understand Kronecker coefficients by constructing an algebra defined over $\ZZ[\u,\ui]$ that carries some of the information of the Hopf algebra $\ZZ \S_r$ and specializes to it at $\u=1$.  Specifically, they construct the nonstandard Hecke algebra  $\nsH_r$ (denoted  $B_r$ in \cite{GCT4}), which is a subalgebra of the tensor square of the Hecke algebra $\H_r$ such that the inclusion $\nsbr{\Delta} : \nsH_r \hookrightarrow \H_r \tsr \H_r$ is a $\u$-analogue of the coproduct  $\Delta$ of $\ZZ \S_r$ (see Definition \ref{d nonstandard Hecke algebra}).  The goal is then to break up the Kronecker problem into two steps \cite{GCT7}:
\begin{enumerate}
\item Determine the multiplicity $n_{\lambda,\mu}^\alpha $ of an irreducible $\nsH_r$-module  $\nsbr{M}_\alpha$ in the tensor product $M_\lambda \otimes M_\mu$.
\item Determine the multiplicity $m^\nu_\alpha$ of the  $\S_r$-irreducible  $M_\nu$ in $\nsbr{M}_\alpha|_{\u =1}$.
\end{enumerate}
The resulting formula for Kronecker coefficients is
\be g_{\lambda \mu \nu}= \sum_\alpha n_{\lambda,\mu}^\alpha m^\nu_\alpha.
\ee
Thus a positive combinatorial formula for $n_{\lambda,\mu}^\alpha$ and $m^\nu_\alpha$ would yield  one for Kronecker coefficients.

However, this approach meets with serious difficulties.  The defining relations of the algebras $\nsH_r$ seem to be extremely complicated and remain mysterious even for $r=4$. Problem (1) seems to be within reach, and, in the forthcoming paper \cite{B4}, we solve it in the two-row case. For problem (2), the hope is to find a canonical basis of $\nsbr{M}_\alpha $ that has a cellular decomposition into  $\S_r$-irreducibles at  $\u=1$, however this seems to be extremely difficult.

In this paper we study a family of algebras $\nsH^{(k)}_3$ that contains  $\H_3$ and  $\nsH_3$ as cases  $k=1,2$.  We discover a remarkable connection between the defining relations of these algebras and Chebyshev polynomials $T_k(x)$.  Specifically, we show that  $\nsH^{(k)}_3$ is generated by $\sP_1^{(k)}, \sP_2^{(k)}$ and has a relation, which we call the nonstandard braid relation, that generalizes the braid relation for $k=1$:
\begin{multline}
\label{e nonstandard braid relation 0}
\sP_1^{(k)}
(\sP^{(k)}_{21} - (\two^{k} a^{(1)})^2) (\sP^{(k)}_{21} - (\two^{k} a^{(2)})^2) \dots (\sP^{(k)}_{21} - ([2]^k a^{(k)})^2)= \\
\sP_2^{(k)}
(\sP^{(k)}_{12} - (\two^{k} a^{(1)})^2) (\sP^{(k)}_{12} - (\two^{k} a^{(2)})^2) \dots (\sP^{(k)}_{12} - ([2]^k a^{(k)})^2),
\end{multline}
where $\two = \u+\ui$, $\sP^{(k)}_{i_1 i_2} = \sP^{(k)}_{i_1} \sP^{(k)}_{i_2}$, and the coefficient $a^{(j)}$ is equal to $T_j(\frac{1}{\two}).$

Chebyshev polynomials come up in several places in nearby areas of algebra, however their appearance here seems to be genuinely new. For example, Chebyshev polynomials appear in the criterion for semisimplicity of Temperley-Lieb and Jones algebras \cite{GHJ, GrahamLehrer} (also see \cite{BM}).  In \cite{BM} they appear in three ways---as just mentioned, in giving the dimension of a centralizer algebra of a Temperley-Lieb algebra, and in calculating the decomposition of a Brauer algebra module into Temperley-Lieb algebra modules.  In this paper, Chebyshev polynomials appear as the coefficients in the relations of an algebra.

This paper is organized as follows.  In Section \ref{s Nonstandard Hecke Algebras} we define the nonstandard Hecke algebras $\nsH^{(k)}_W$ for any Coxeter group  $W$ and establish some of their basic properties.  Section \ref{s A generalization of the braid relation} contains our main results---a description of the irreducible representations of $\nsH^{(k)}_3$ (Theorem \ref{t nsH S3 2D reps}) and a presentation for  $\nsH^{(k)}_3$ (Theorem \ref{t S3 coprod k relations}).
In Section \ref{s Generalizations to dihedral groups} we generalize these results to nonstandard Hecke algebras  $\nsH^{(k)}_W$ of dihedral groups.  In this case, the nonstandard braid relation involves a multivariate version of Chebyshev polynomials and the $\nsH^{(k)}_W$-irreducibles are parameterized by signed compositions of  $k$ (see Definition \ref{d signed compositions}).
We further show (Section \ref{s cellular basis}) that the nonstandard Hecke algebras   of dihedral groups have bases generalizing the Kazhdan-Lusztig basis of $\H_3$ and are cellular algebras in the sense of Graham and Lehrer \cite{GrahamLehrer}.


\section{Nonstandard Hecke Algebras}
\label{s Nonstandard Hecke Algebras}
After recalling the definition of the (standard) Hecke algebra  $\H_W$ of a Coxeter group  $W$, we introduce the nonstandard Hecke algebra  $\nsH_W$ of \cite{GCT4}.  Hecke algebras are not Hopf algebras in a natural way, and the nonstandard Hecke algebra  $\nsH_W$ is in a sense the smallest deformation of $\H_W$ that also deforms the Hopf algebra structure of the group algebra  $\ZZ W$. We show that the Hecke algebra $R \H_{\S_2}$ is a Hopf algebra (for suitable rings  $R$).  We then define the sequence of algebras $\nsH^{(k)}_W$, $k \geq 1$, that begins with $\H_W$ ($k=1$) and $\nsH_W$ ($k=2$).  We record some basic facts about the representation theory of these algebras and define anti-automorphisms that behave like the antipode of a Hopf algebra.

\subsection{}
\label{ss coxeter groups}
Let $(W,S)$ be a Coxeter group with length function $\ell$.
We work over the ground ring $\mathbf{A} = \ZZ[\u,\ui]$, the ring of Laurent polynomials in the indeterminate $\u$.

\begin{definition}
The \emph{Hecke algebra} $\H_W$ of a Coxeter group $(W, S)$ is the free $\mathbf{A}$-module with basis $\{T_w :\ w\in W\}$ and relations generated by
\begin{equation}\label{hecke eq}
\begin{array}{ll}T_uT_v = T_{uv} & \text{if } \ell(uv)=\ell(u)+\ell(v)\\
 (T_s - \u)(T_s + \ui) = 0 & \text{if } s\in S.\end{array}
\end{equation}
\end{definition}

For any $J\subseteq S$, the \emph{parabolic subgroup} $W_J$ is the subgroup of $W$ generated by $J$.
We let $(\H_W)_J$ denote the subalgebra of $\H_W$ with $\mathbf{A}$-basis $\{T_w:\ w\in W_J\}$, which is isomorphic to $\H_{W_J}$.

For any commutative ring $\field$ and ring homomorphism $\mathbf{A} \to \field$, let $\field \H_W = \field \tsr_{\mathbf{A}} \H_W$.  We will often let $\field = \QQ(u),$ the quotient field of $\mathbf{A}$.  If the ring $\field$ is understood and $\mathbf{A} \to \field$ is given by $\u \mapsto z$, then we also write $\H_W|_{\u = z}$ for $\field \tsr_\mathbf{A} \H_W$.  The Hecke algebra $\H_W$ over $\mathbf{A}$ is the \emph{generic Hecke algebra of  $W$} and $\field \H_W$ is a \emph{specialization} of $\H_W$.

We are particularly interested in the type $A$ case in this paper, and in this case  $(W,S) = (\S_r, \{s_1,\ldots,s_{r-1}\})$ and we abbreviate $\H_{\S_r}$ by $\H_r$.

\subsection{}
\label{ss theta involution}
The $\u$-integers are $[k] := \frac{\u^k - \u^{-k}}{\u - \ui} \in \mathbf{A}$. We also use the notation  $[k]$ to denote the set $\{1,\ldots,k\}$, but these usages should be easy to distinguish from context. We also set $\f = [2]^2$ because this constant appears particularly often.

Let $\C_s = T_s + \ui$ and  $C_s = T_s - \u$ for each  $s \in S$.  These are the simplest unsigned and signed Kazhdan-Lusztig basis elements.  They are also proportional to the primitive central idempotents of  $\field (\H_W)_{\{s\}} \cong \field \H_2$, provided the constant $[2]$ is invertible in $\field$;  define $p_+ = \frac{1}{[2]} \C_{s_1}$ (respectively $p_- = - \frac{1}{[2]} C_{s_1}$) to be the idempotent corresponding to the trivial (respectively sign) representation of $\field \H_2$.

Write $\epsilon_+, \epsilon_-$ for the one-dimensional trivial and sign representations of $\H_W$, which are defined by
\[ \begin{array}{cccc}
  \epsilon_+ : \C_s \mapsto \two, && \epsilon_- : \C_s \mapsto 0, & s \in S.
\end{array} \]
We identify these algebra homomorphisms $\epsilon_+, \epsilon_-: \H_W \to \mathbf{A}$ with left $\H_W$-modules in the usual way.


There is an $\mathbf{A}$-algebra automorphism $\theta : \H_W \to \H_W$ defined by $\theta(T_s) = - T_s^{-1},\ s \in S$. Let $1^{\text{op}}$ be the $\mathbf{A}$-anti-automorphism of $\H_W$ given by $1^{\text{op}}(T_w) = T_{w^{-1}}$.
Let $\theta^{\text{op}}$ be the $\mathbf{A}$-anti-automorphism of $\H_W$ given by $\theta^{\text{op}} = \theta \circ 1^{\text{op}} = 1^{\text{op}} \circ \theta$.  We will establish some basic properties of these anti-automorphisms in \textsection\ref{ss theta on nsH k reps}.

Let  $\eta$ be the unique $\mathbf{A}$-algebra homomorphism from  $\mathbf{A}$ to $\H_W$.
At  $\u=1$, the maps $\eta, \epsilon_+, 1^\text{op}$ specialize to the unit, counit, and antipode of the Hopf algebra  $\ZZ W$.

\subsection{}
\label{ss nonstandard Hecke algebra definition}
Here we introduce the nonstandard Hecke algebra  $\nsH_W$ from \cite{GCT4} (denoted  $B_r$ there in the case  $W = \S_r$), and show that $R \nsH_2$ is isomorphic to $R \H_2$ (for suitable $R$), thereby giving a Hopf algebra structure on $R \H_2$.  We also show that the anti-automorphism $1^\text{op}$ behaves like an antipode of the Hopf algebra-like object $\nsH_W$.

\begin{definition}
\label{d nonstandard Hecke algebra}
The \emph{nonstandard Hecke algebra}  $\nsH_W$ is the subalgebra of $\H_W \tsr \H_W$ generated by the elements
\[ \sP_s := \C_s \tsr \C_s + C_s \tsr C_s, \ s \in S. \]
We let $\nsbr{\Delta}:\nsH_W \hookrightarrow   \H_W \tsr \H_W$ denote the canonical inclusion, which we think of as a deformation of the coproduct $\Delta_{\ZZ W} :\ZZ W \to \ZZ W \tsr \ZZ W$,  $w \mapsto w \tsr w$.
\end{definition}
The nonstandard Hecke algebra is also the subalgebra of $\H_W \tsr \H_W$ generated by
\[ \sQ_s := \f - \sP_s = - \C_s \tsr C_s - C_s \tsr \C_s, \ s \in S. \]
Despite their simple definition, the nonstandard Hecke algebras seem to be extremely difficult to describe in terms of generators and relations.  Indeed, the main purpose of this paper is to work out such a presentation for dihedral groups $W$.
For the easiest case $W = \S_2$, the story is quite nice.

\begin{proposition}
\label{p H2 Hopf algebra}
Set  $R = \mathbf{A}[\frac{1}{[2]}]$ and  $R_1 = \ZZ[\frac{1}{2}]$ (so that  $R_1$ is the  $\u=1$ specialization of  $R$).  We have $R \nsH_2 \cong R\H_2$ by $\frac{1}{\f} \sP_1 \mapsto p_+$. Then
\begin{list}{\emph{(\roman{ctr})}} {\usecounter{ctr} \setlength{\itemsep}{1pt} \setlength{\topsep}{2pt}}
\item $R \H_2$ is a Hopf algebra with coproduct $\Delta =\nsbr{\Delta}$, antipode $1^{\text{op}}$, counit $\epsilon_+$, and unit $\eta$.
\item the Hopf algebra $R \H_2|_{\u = 1}$, with Hopf algebra structure coming from (i), is isomorphic to the group algebra $R_1 \S_2$ with its usual Hopf algebra structure.
\end{list}
Moreover, the Hopf algebra structure of (i) is the unique way to make the algebra $R\H_2$ into a Hopf algebra so that (ii) is satisfied.
\end{proposition}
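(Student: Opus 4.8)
The plan is to exploit that, since $[2]$ is a unit in the integral domain $R$, the idempotents $p_+,p_-$ split $R\H_2$ as an $R$-algebra as $R p_+\oplus R p_-\cong R\times R$; a Hopf structure on such an algebra is essentially a group-scheme structure on the two-point scheme $\spec R\H_2$, and once we know which point is the identity section the group law is forced to be that of $\ZZ/2$. Concretely, I would argue as follows. The unit of any Hopf structure on $R\H_2$ is the structure map $\eta$, so nothing is to be chosen there. The counit is an $R$-algebra homomorphism $R\H_2\to R$; since $R$ has no idempotents besides $0$ and $1$, such a map sends $\{p_+,p_-\}$ bijectively onto $\{1,0\}$, so it is either $\epsilon_+$ (with $p_+\mapsto 1$) or $\epsilon_-$ (with $p_+\mapsto 0$). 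Here (ii) enters: the counit must specialize at $\u=1$ to the augmentation of $R_1\S_2$ under the canonical identification $R\H_2|_{\u=1}\cong R_1\S_2$, $T_w\mapsto w$; as $\epsilon_+(T_{s_1})=\u$ specializes to $1$ while $\epsilon_-(T_{s_1})=-\ui$ specializes to $-1$, the counit must be $\epsilon_+$.

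Next I would pin down the coproduct. Writing $R\H_2\tsr_R R\H_2\cong R\times R\times R\times R$ via the four orthogonal idempotents $p_i\tsr p_j$, $i,j\in\{+,-\}$, an $R$-algebra map $\Delta\colon R\H_2\to R\H_2\tsr_R R\H_2$ is determined by the subset $A\subseteq\{+,-\}^2$ with $\Delta(p_+)=\sum_{(i,j)\in A}p_i\tsr p_j$. The counit axioms $(\epsilon_+\tsr\id)\circ\Delta=\id=(\id\tsr\epsilon_+)\circ\Delta$, applied to $p_+$, force $(+,+)\in A$ and $(+,-),(-,+)\notin A$, leaving only $A=\{(+,+)\}$ or $A=\{(+,+),(-,-)\}$. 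In the first case $p_+$ is grouplike and hence invertible in $R\H_2$; but $p_+$ is a nontrivial idempotent (as $p_-=1-p_+\neq 0$), which is impossible. So $\Delta(p_+)=p_+\tsr p_+ + p_-\tsr p_-$, and since $\frac{1}{\f}\sP_1=p_+\tsr p_+ + p_-\tsr p_-$ (a one-line computation from $\sP_1=\C_{s_1}\tsr\C_{s_1}+C_{s_1}\tsr C_{s_1}$, $\C_{s_1}=[2]p_+$, $C_{s_1}=-[2]p_-$), transporting along the isomorphism $R\nsH_2\cong R\H_2$ shows this $\Delta$ is exactly $\bar{\Delta}$. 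Finally the antipode is the unique two-sided convolution inverse of $\id_{R\H_2}$ in $\hom_R(R\H_2,R\H_2)$, hence is determined by $(\Delta,\epsilon)=(\bar{\Delta},\epsilon_+)$ and equals the antipode $1^{\text{op}}$ of (i). This gives the uniqueness.

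The main obstacle will be the step that chooses $\epsilon_+$ over $\epsilon_-$, which is the only place (ii) is genuinely used. The algebra $R\H_2$ has a nontrivial $R$-algebra automorphism swapping $p_+$ and $p_-$, and conjugating the structure of (i) by it produces a formally different Hopf structure that is still \emph{abstractly} Hopf-isomorphic to $R_1\S_2$; so (ii) has to be read as the statement that the canonical specialization $T_w\mapsto w$ is itself a Hopf isomorphism. It is this rigidity --- equivalently, the rigidity of the identity section of the group scheme $\spec R\H_2$ --- that breaks the tie, and care is needed to state it correctly. Everything else is the elementary bookkeeping of idempotents in $R\times R$ and $R\times R\times R\times R$ indicated above.
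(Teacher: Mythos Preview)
Your argument is correct and takes a somewhat different route from the paper's for the uniqueness claim. You pin down the counit first (using (ii) to select $\epsilon_+$ over $\epsilon_-$), then use the counit axiom together with the fact that grouplike elements of a Hopf algebra are invertible to force $\Delta(p_+)=p_+\tsr p_+ + p_-\tsr p_-$. The paper instead goes straight for the coproduct: it lists the idempotents of $R\H_2\tsr R\H_2\cong R^4$, notes that $\Delta(p_+)$ must be one of them (other than the identity), and then invokes (ii) directly to single out the unique idempotent that specializes at $\u=1$ to $\Delta_{R_1\S_2}(p_+)=p_+\tsr p_+ + p_-\tsr p_-$; the counit is afterwards determined by $\Delta$, and among the two anti-automorphisms $1^{\text{op}},\theta^{\text{op}}$ only $1^{\text{op}}$ satisfies the antipode axiom. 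Your approach is more structural---it is essentially the classification of group-scheme structures on $\spec(R\times R)$ over a connected base---and your remark that (ii) is needed precisely to break the $p_+\leftrightarrow p_-$ symmetry is sharper than the paper's formulation. On the other hand, the paper verifies parts (i) and (ii) more explicitly (checking the coassociativity and counit identities on $p_+$ by hand and deferring the antipode axiom to the next proposition), whereas you leave those checks implicit in the group-scheme framing; it would be worth stating once that the resulting structure is that of the group algebra $R[\ZZ/2]$ via $g\mapsto p_+-p_-$, so that coassociativity and the antipode are visibly inherited.
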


\begin{proof}
The isomorphism $R \nsH_2 \cong R\H_2$ is immediate from the observation that $\nsbr{\Delta} (\frac{1}{\f} \sP_s) = p_+ \tsr p_+ + p_- \tsr p_-$ is an idempotent in  $R \H_2 \tsr R \H_2$.

The axiom for the antipode is a special case of Proposition \ref{p hecke algebra antipode} to come (and also easy to check directly). We need to check the axioms
\[ (\epsilon_+ \tsr \idelm) \circ \Delta = \idelm = (\idelm \tsr \epsilon_+) \circ \Delta, \]
\[ (\Delta \tsr \idelm) \circ \Delta = (\idelm \tsr \Delta) \circ \Delta, \]
which is easy. For example, for the second, observe that both sides applied to $p_+$ yield
\[ p_+ \tsr p_+ \tsr p_+ + p_+ \tsr p_- \tsr p_- + p_- \tsr p_+ \tsr p_- + p_- \tsr p_- \tsr p_+. \]

It is straightforward to see that (ii) is satisfied.  We check only that the coproduct commutes with specialization, and omit verifying that this is also so for the antipode, the counit, and the unit:
\begin{multline*}
\Delta|_{\u =1}(\frac{1}{2} (1 + s_1)) = \frac{1}{2} (1 + s_1) \tsr \frac{1}{2} (1 + s_1) + \frac{1}{2} (1 - s_1) \tsr \frac{1}{2} (1 - s_1) \\
= \frac{1}{2} (1 \tsr 1 + s_1 \tsr s_1) = \Delta_{R_1 \S_2}(\frac{1}{2} (1 + s_1)), \end{multline*}

where  $\Delta|_{\u =1}$ is the specialization of $\Delta $ and $\Delta_{R_1 \S_2}$ is the usual coproduct on  $R_1 \S_2$.

For the uniqueness statement, we use that $ R \H_2 \tsr R \H_2$ is isomorphic to a product of matrix algebras. Explicitly,
\[ R \H_2 \tsr R \H_2 \cong R (p_+ \tsr p_+) \oplus R (p_+ \tsr p_-) \oplus R (p_- \tsr p_+) \oplus R (p_- \tsr p_-) . \]
The map $\Delta$ is determined by  $\Delta(p_+)$, and the image of  $\Delta$ is isomorphic to  $R \H_2$ if and only if  $\Delta(p_+)$ is an idempotent not equal to the identity.  We also have that $\Delta_{R_1 \S_2}(p_+) = p_+ \tsr p_+ + p_- \tsr p_-$.  The only idempotent of $R \H_2 \tsr R \H_2$ that specializes to $\Delta_{R_1 \S_2}(p_+)$ at  $\u = 1$ is $p_+ \tsr p_+ + p_- \tsr p_-$, hence this must be  $\Delta(p_+)$ as desired.
Additionally, the counit is determined uniquely by the comultiplication; the only anti-automorphisms of $R \H_2$ are  $1^\text{op}$ and  $\theta^{\text{op}}$, and only $1^\text{op}$ satisfies the required axiom.
\end{proof}
%

Analogous to the trivial and sign representations of  $\H_W$, there are one-dimensional trivial and sign representations of $\nsH_W$, which we denote by $\nsbr{\epsilon}_{+}$ and $\nsbr{\epsilon}_{-}$:
\[ \begin{array}{cccc}
  \nsbr{\epsilon}_{+} : \sP_{s} \mapsto \two^{2}, && \nsbr{\epsilon}_{-} : \sP_s \mapsto 0, & s \in S.
\end{array} \]

For the next proposition, let $\eta,1^{\text{op}},\theta^\text{op}$ be as in \textsection\ref{ss theta involution}, and let $\mu$ be the multiplication map for $\H_W$.

\begin{proposition} \label{p hecke algebra antipode}
The involutions $1^{\text{op}}$ and $\theta^{\text{op}}$ are antipodes in the following sense:
\begin{flalign}
\label{e antipode 1}
\mu \circ (1^{\text{op}} \tsr 1) \circ \nsbr{\Delta} &= \eta \circ \nsbr{\epsilon}_+,  \\
\label{e antipode 2}
\mu \circ (\theta^{\text{op}} \tsr 1) \circ \nsbr{\Delta} &= \eta \circ \nsbr{\epsilon}_-,
\end{flalign}
where these are equalities of maps from $\nsH_W$ to $\H_W$.
\end{proposition}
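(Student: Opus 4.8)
The plan is to exploit the fact that, although $\mu\circ(1^{\text{op}}\tsr1)\circ\bar{\Delta}$ and $\mu\circ(\theta^{\text{op}}\tsr1)\circ\bar{\Delta}$ are not algebra homomorphisms, they are $\mathbf{A}$-linear, so it suffices to verify \eqref{e antipode 1} and \eqref{e antipode 2} on an $\mathbf{A}$-spanning set of $\nsH_W$. Since $\nsH_W$ is by definition the subalgebra of $\H_W\tsr\H_W$ generated by the $\sP_s$, one such spanning set consists of the monomials $\sP_{s_1}\sP_{s_2}\cdots\sP_{s_n}$, $s_i\in S$, $n\geq0$ (with $n=0$ giving $1\tsr1$). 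Write $F_+=\mu\circ(1^{\text{op}}\tsr1)$ and $F_-=\mu\circ(\theta^{\text{op}}\tsr1)$, viewed as $\mathbf{A}$-linear maps $\H_W\tsr\H_W\to\H_W$, so that the left-hand sides of \eqref{e antipode 1} and \eqref{e antipode 2} are $F_+\circ\bar{\Delta}$ and $F_-\circ\bar{\Delta}$.

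The heart of the argument is a one-step recursion describing how $F_\pm$ interacts with left multiplication by a generator. For any $z=\sum_i a_i\tsr b_i\in\H_W\tsr\H_W$ we have $\sP_s z=\sum_i\C_s a_i\tsr\C_s b_i+C_s a_i\tsr C_s b_i$, so, using that $1^{\text{op}}$ is an anti-automorphism fixing $\C_s$ and $C_s$, $F_+(\sP_s z)=\sum_i 1^{\text{op}}(a_i)(\C_s^2+C_s^2)b_i$. The quadratic relation for $T_s$ gives $\C_s^2=[2]\C_s$, $C_s^2=-[2]C_s$ and $\C_s-C_s=[2]$, hence $\C_s^2+C_s^2=[2]^2=\f$ is a central scalar and $F_+(\sP_s z)=\f\,F_+(z)$. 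Running the same computation for $F_-$, now with $\theta^{\text{op}}$ an anti-automorphism satisfying $\theta^{\text{op}}(\C_s)=-C_s$ and $\theta^{\text{op}}(C_s)=-\C_s$, gives $F_-(\sP_s z)=-\sum_i\theta^{\text{op}}(a_i)(C_s\C_s+\C_s C_s)b_i$, and $\C_s C_s=C_s\C_s=0$ (again the quadratic relation), so $F_-(\sP_s z)=0$.

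Peeling generators off the left one at a time and using $F_+(1\tsr1)=1=F_-(1\tsr1)$, these recursions give $F_+(\sP_{s_1}\cdots\sP_{s_n})=\f^{n}$ and $F_-(\sP_{s_1}\cdots\sP_{s_n})=0$ for $n\geq1$ (and $=1$ for $n=0$). On the other side, $\bar{\epsilon}_+$ and $\bar{\epsilon}_-$ are algebra homomorphisms $\nsH_W\to\mathbf{A}$ with $\bar{\epsilon}_+(\sP_s)=\f$ and $\bar{\epsilon}_-(\sP_s)=0$ (being the restrictions to $\nsH_W$ of $\epsilon_+\tsr\epsilon_+$ and $\epsilon_+\tsr\epsilon_-$, which send $\sP_s$ to $\epsilon_+(\C_s)^2=\f$ and to $\epsilon_+(\C_s)\epsilon_-(\C_s)=0$ respectively), so $\eta\circ\bar{\epsilon}_+$ and $\eta\circ\bar{\epsilon}_-$ agree with $F_+\circ\bar{\Delta}$ and $F_-\circ\bar{\Delta}$ on every monomial $\sP_{s_1}\cdots\sP_{s_n}$. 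By $\mathbf{A}$-linearity this proves \eqref{e antipode 1} and \eqref{e antipode 2} on all of $\nsH_W$.

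The only real obstacle is conceptual: because $F_\pm\circ\bar{\Delta}$ is not multiplicative, one cannot verify the antipode identities on algebra generators as one would for a homomorphism — the resolution is to work with the redundant spanning set of all monomials in the $\sP_s$, which is legitimate since every map in sight is $\mathbf{A}$-linear. Beyond that, everything reduces to the elementary facts about $1^{\text{op}}$, $\theta^{\text{op}}$, $\C_s$ and $C_s$ used above, all immediate from the defining quadratic relation of $\H_W$; with these in hand the proof is exactly the short calculation sketched here. (Equivalently, one can expand $\bar{\Delta}(\sP_{s_1}\cdots\sP_{s_n})$ as a sum of elementary tensors $D\tsr D$ and collapse each resulting palindrome $1^{\text{op}}(D)\,D$ from the inside out, but the $\sP_s z$–recursion is cleaner.)
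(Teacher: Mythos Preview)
Your proof is correct and follows essentially the same approach as the paper's: both arguments exploit the anti-automorphism to collapse the ``middle'' factor in $\mu\circ(S\tsr 1)(\sP_s z)$ (or $\sQ_s z$) to a scalar, then iterate over monomials in the generators. The only cosmetic difference is that the paper phrases \eqref{e antipode 1} in terms of $\sQ_s=\f-\sP_s$, showing directly that $\mu\circ(1^{\text{op}}\tsr1)\circ\bar{\Delta}$ annihilates the two-sided ideal generated by the $\sQ_s$, whereas you stay with $\sP_s$ and obtain the equivalent scaling recursion $F_+(\sP_s z)=\f\,F_+(z)$.
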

\begin{proof}
The right-hand side of (\ref{e antipode 1}) is the algebra homomorphism defined by $\eta \circ \nsbr{\epsilon}_+(\sQ_s) = 0$, $s \in S$. This is a linear map from  $\nsH_W$ to  $\H_W$ which sends the two-sided ideal generated by the  $\sQ_s$ to 0 and sends 1 to 1, and there is only one linear map with these properties.  To see that the left-hand side is also this linear map, first observe that it takes $\sQ_s$ to $0$ and $1$ to $1$:
\be
\label{e antipode on Qs}
\mu \circ (1^{\text{op}} \tsr 1) (\sQ_s) = \mu(\sQ_s) = - \C_s C_s - C_s \C_s = 0,\ s \in S.
\ee
Next, let $\nsbr{\Delta}(\sQ_s) = \sum_i a^{(i)} \tsr b^{(i)}$ and $\nsbr{\Delta}(x) = \sum_j c^{(j)} \tsr d^{(j)}$ for some $x \in \nsH_W$. Since $\nsbr{\Delta}$ is an algebra homomorphism, there holds
\be
 \begin{array}{cll}
 \mu \circ (1^{\text{op}} \tsr 1) \circ \nsbr{\Delta}(\sQ_s x) &=& \mu \circ (1^{\text{op}} \tsr 1) \left( \sum_i (a^{(i)} \tsr b^{(i)}) \sum_j (c^{(j)} \tsr d^{(j)}) \right) \\
 &=& \sum_{i, j} 1^{\text{op}}(c^{(j)}) 1^{\text{op}}(a^{(i)}) b^{(i)} d^{(j)} \\
 &=& 0,
 \end{array}
 \ee
where the last equality is by (\ref{e antipode on Qs}).

We can similarly show that $\mu \circ (1^{\text{op}} \tsr 1) \circ \nsbr{\Delta}(x \sQ_s) = 0$ for any $x \in \nsH_W$. Thus the two-sided ideal generated by the $\sQ_s$ is sent to zero, so the left and right-hand sides of (\ref{e antipode 1}) agree. Equation (\ref{e antipode 2}) is proved in a similar way by replacing $\sQ_s$ with $\sP_s$ and $\epsilon_+$ with $\epsilon_-$ above.
\end{proof}

\subsection{}
By Proposition \ref{p H2 Hopf algebra}, we can define $\Delta^{(k)} : R \H_2 \to R \H_2^{\tsr k}$ inductively by $\Delta^{(k)} := (\Delta^{(k-1)} \tsr 1) \circ \Delta = (1 \tsr \Delta^{(k-1)}) \circ \Delta$ and $\Delta^{(2)} := \Delta$. Explicitly,
\be
\label{e explicit coproduct}
\Delta^{(k)}(p_+) = \sum_{\substack{ \mathbf{a} \in \{ +, - \}^{k} \\ |\{ i : a_i = - \}| \text{ is even}} } p_{a_1} \tsr \dots \tsr p_{a_k}.
\ee
It is now natural to generalize $\nsH_{W}$ as follows.
\begin{definition}
The \emph{nonstandard Hecke algebra} $\nsH^{(k)}_W$ is the subalgebra of $\H_W^{\tsr k}$ generated by the $\sP^{(k)}_s := \two^{k} \Delta_s^{(k)}(p_{+})$ for all $s \in S$, where $\Delta_s^{(k)} = \iota_s^{\tsr k} \circ \Delta^{(k)}$ and $\iota_s$  is the inclusion $\H_2 \cong (\H_{W})_{\{s\}} \hookrightarrow \H_W$.  Let $\nsbr{\Delta}^{(1^{k})} : \nsH^{(k)}_{W} \hookrightarrow \H_{W}^{\tsr k}$ be the canonical inclusion.
\end{definition}

Set $\sQ^{(k)}_s = [2]^k - \sP^{(k)}_s$.  The set $S$ of simple reflections of $ W$ will always be denoted $\{s_1,\ldots,s_{r-1}\}$ in this paper, and we let $\sP^{(k)}_{i_1 i_2 \ldots i_l}$ be shorthand for  $\sP^{(k)}_{s_{i_1}} \sP^{(k)}_{s_{i_2}} \cdots \sP^{(k)}_{s_{i_l}}$.  Note that $ \nsH^{(2)}_W = \nsH_W$,  $\sP^{(2)}_s = \sP_s$ and $ \nsH^{(1)}_W = \H_W$,  $\sP^{(1)}_s = \C_s$, and we set  $ \nsH^{(0)}_W = \mathbf{A}$.

\begin{remark}
Proposition \ref{p H2 Hopf algebra} supports the idea that the $\nsH^{(k)}_W$ are  ``the smallest approximation to a Hopf algebra on $\H_W$ deforming the Hopf algebra $\ZZ W$.''  An interesting problem is to make this precise by showing that $\prod_{k \geq 0} \nsH^{(k)}_W$ (or a similar algebra) is a universal object in some categorical sense. This may be closely related to the problem of constructing a right adjoint to the forgetful functor from Hopf algebras to algebras, which was recently done in  \cite{Agore}.
\end{remark}

For any nonnegative integers $k_l, k_r$ with  $k_l + k_r =k$, we have $\Delta^{(k)} = (\Delta^{(k_l)} \tsr \Delta^{(k_r)}) \circ \Delta$; applying this to  $\two^{k} p_{+}$ yields
\renewcommand{\minalignsep}{0pt}
\begin{flalign}
\label{e sP^k definition1} \sP^{(k)}_s &= \sP^{(k_l)}_s \tsr \sP^{(k_r)}_s
  + \sQ^{(k_l)}_s \tsr \sQ^{(k_r)}_s \\
\label{e sP^k definition2} &= 2 \sP^{(k_l)}_s \tsr \sP^{(k_r)}_s - [2]^{k_l} \tsr \sP^{(k_r)}_s - \sP^{(k_l)}_s \tsr [2]^{k_r} + [2]^{k}
\end{flalign}
for all $s \in S$. As a consequence, there is a canonical inclusion
\be
\label{e delta kl kr definition}
\nsbr{\Delta}^{(k_{l}, k_{r})} :  \nsH^{(k)}_{W} \hookrightarrow \nsH^{(k_{l})}_{W} \tsr \nsH^{(k_{r})}_{W}, \
 \sP^{(k)}_s \mapsto \sP^{(k_l)}_s \tsr \sP^{(k_r)}_s
  + \sQ^{(k_l)}_s \tsr \sQ^{(k_r)}_s.
\ee
Thus any pair $\nsbr{M}_{l}, \nsbr{M}_{r},$ where $\nsbr{M}_{l}$ is a $\nsH^{(k_{l})}_{W}$-module and $\nsbr{M}_{r}$ is a $\nsH^{(k_{r})}_{W}$-module, gives rise to the $\nsH^{(k)}_{W}$-module $\nsbr{M}_{l} \tsr \nsbr{M}_{r}$ by restriction.

We have the following commutativity property
\be
\label{e commutativity nsH k}
\Res_{\nsH^{(k)}_{W}} (\nsbr{M}_{l} \tsr \nsbr{M}_{r}) \cong \Res_{\nsH^{(k)}_{W}} (\nsbr{M}_{r} \tsr \nsbr{M}_{l}),
\ee
where the isomorphism is given by the map swapping tensor factors; this is an $ \nsH^{(k)}_W$-module homomorphism by \eqref{e sP^k definition1}.

There are one-dimensional trivial and sign representations of $\nsH^{(k)}_W$ which generalize those for  $\nsH_W = \nsH^{(2)}_W$ defined previously. We also denote these by  $\nsbr{\epsilon}_{+}$ and $\nsbr{\epsilon}_{-}$:
\[ \begin{array}{cccc}
  \nsbr{\epsilon}_{+} : \sP^{(k)}_{s} \mapsto \two^{k}, && \nsbr{\epsilon}_{-} : \sP^{(k)}_s \mapsto 0, & s \in S.
\end{array} \]

For a ring homomorphism $\field \to \mathbf{A}$, we have the specialization $\field \nsH^{(k)}_W := \field \tsr_{\mathbf{A}} \nsH^{(k)}_W$ of the nonstandard Hecke algebra.  Also, we often abuse notation and write $\nsbr{\epsilon}_+$ for the $\field \nsH^{(k)}_W$-module  $\field \tsr_\mathbf{A} \nsbr{\epsilon}_+$, when  $\field$ is understood from context; the same goes for all the other one-dimensional representations in this paper.

\subsection{}
\label{ss theta on nsH k reps}
Here we record some useful results about the anti-automorphisms $1^\text{op}$,  $\theta^\text{op}$ of  $\H_W$ and their corresponding anti-automorphisms $1^\text{op}$,  $(\theta^{(k)})^\text{op}$ of  $\nsH^{(k)}_W$.
Many of the observations here are also made in \cite[\textsection 10]{GCT4}.

Any anti-automorphism $S$ of an $\mathbf{A}$-algebra $H$ allows us to define duals of $H$-modules: let $\langle , \rangle: M \tsr M^* \to \mathbf{A}$ be the canonical pairing, where $M^*$ is the $\mathbf{A}$-module  $\hom_{\mathbf{A}}(M,\mathbf{A})$. Then the $H$-module structure on  $M^*$ is defined by
\[ \langle m,h m'\rangle = \langle S(h)m,m' \rangle \text{ for any } h \in H,\ m \in M, m' \in M^*.\]

We write $M^\dualone$ (respectively  $M^\dualtheta$) for the  $\H_W$-module dual to $M$ corresponding to the anti-automorphism  $1^\text{op}$ (respectively  $\theta^\text{op}$).

We note that for $W = \S_r$ dualization $M \mapsto M^\dualtheta$ corresponds to transposing partitions.
\begin{proposition}[\cite{B4} (see also {\cite[Exercises 2.7, 3.14]{Mathas}})]
\label{p dual basis to C' S = 1 theta op}
Let $M_\lambda$ be the Specht module of  $\H_r$ of shape $\lambda$.
Then
\[
\begin{array}{lcr}
 M_\lambda^\dualone \cong M_{\lambda} &  \text{and} & M_\lambda^\dualtheta \cong M_{\lambda'},
\end{array}
\]
where  $\lambda'$ is the transpose of the partition $\lambda.$
\end{proposition}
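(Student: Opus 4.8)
The plan is to establish the two isomorphisms by working with an explicit cellular/Specht-type basis of $M_\lambda$ and computing the Gram matrix of the associated bilinear form. First I would recall that $M_\lambda$ carries a natural $\H_r$-invariant bilinear form $\langle\,,\,\rangle_\lambda$ with respect to the anti-automorphism $1^{\text{op}}$: concretely, if one realizes $M_\lambda$ via the Murphy basis (or the Kazhdan–Lusztig / seminormal basis), this form is the standard one, and the key fact is that $\langle T_w m, m'\rangle_\lambda = \langle m, T_{w^{-1}} m'\rangle_\lambda$ for all $w$, i.e.\ $\langle h m, m'\rangle_\lambda = \langle m, 1^{\text{op}}(h)m'\rangle_\lambda$. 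Since $M_\lambda$ is irreducible over $\QQ(u)$, this form is nondegenerate there, so it induces an $\H_r$-module isomorphism $M_\lambda \xrightarrow{\sim} M_\lambda^\dualone$, $m \mapsto \langle m, -\rangle_\lambda$; this is precisely the first claimed isomorphism. For the integral or generic statement one checks that the form takes values in $\mathbf{A}$ on the chosen basis and has determinant a unit in $\QQ(u)$, which suffices for the $\QQ(u)$-statement we need here.

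For the second isomorphism, $M_\lambda^\dualtheta \cong M_{\lambda'}$, I would combine the first with the sign-twist. Recall the automorphism $\theta$ satisfies $\theta(T_s) = -T_s^{-1}$, equivalently $\theta(C_s') = -C_s$ up to the swap of signed and unsigned generators; tensoring any module $M$ by the one-dimensional sign representation $\epsilon_-$ gives a module $M \otimes \epsilon_-$ whose underlying action is $M$ pulled back along $\theta$ (since $\theta$ is, up to an inner-type adjustment, the composite of the identity with sign-twist on the generators). The classical statement — which is exactly the content of \cite[Exercises 2.7, 3.14]{Mathas} — is that twisting the Specht module $M_\lambda$ by sign yields $M_{\lambda'}$, i.e.\ $\theta^* M_\lambda \cong M_{\lambda'}$ as $\H_r$-modules. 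Now observe $\theta^{\text{op}} = \theta \circ 1^{\text{op}}$, so the $\theta^{\text{op}}$-dual of $M_\lambda$ is the $1^{\text{op}}$-dual of $\theta^* M_\lambda$; applying the first isomorphism to $\theta^* M_\lambda \cong M_{\lambda'}$ gives
\[
M_\lambda^\dualtheta \;\cong\; (\theta^* M_\lambda)^\dualone \;\cong\; M_{\lambda'}^\dualone \;\cong\; M_{\lambda'},
\]
as desired.

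The main obstacle, and the only genuinely non-formal point, is the claim that sign-twisting the Specht module transposes the shape, $\theta^* M_\lambda \cong M_{\lambda'}$. Over the group algebra at $u=1$ this is the familiar fact $S^\lambda \otimes \mathrm{sgn} \cong S^{\lambda'}$, but at the Hecke-algebra level one has to be careful about which sign convention ($T_s \mapsto u$ versus $T_s \mapsto -u^{-1}$) is in force and to pin down that $\theta$ implements the correct twist relative to the chosen realization of $M_\lambda$; the cleanest route is to verify it on a cellular basis by transposing tableaux and tracking how the dominance order and the form behave under $\theta$. Since the paper only needs the $\QQ(u)$-statement and cites Mathas for precisely this input, I would simply invoke that reference for this step and assemble the rest as above; this is why the proposition is stated as a citation of \cite{B4} and \cite{Mathas} rather than proved in full. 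One should also double-check the degenerate/orientation issue that $1^{\text{op}}$ fixes each $M_\lambda$ and does not itself transpose — this follows because $1^{\text{op}}$ fixes $T_s$ and hence fixes the generators of each parabolic, so it cannot swap $\lambda \leftrightarrow \lambda'$, consistent with self-duality under $1^{\text{op}}$.
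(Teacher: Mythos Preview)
The paper does not prove this proposition at all: it is stated with attribution to \cite{B4} and \cite{Mathas} and no proof environment follows. There is therefore nothing in the paper to compare your argument against. Your proposal is a correct sketch of the standard argument --- nondegeneracy of the canonical $1^{\text{op}}$-invariant form on the (generically irreducible) Specht module gives $M_\lambda^\dualone \cong M_\lambda$, and then the factorization $\theta^{\text{op}} = \theta \circ 1^{\text{op}}$ together with the sign-twist identity $\theta^* M_\lambda \cong M_{\lambda'}$ yields $M_\lambda^\dualtheta \cong M_{\lambda'}$ --- and you already note that the nontrivial input $\theta^* M_\lambda \cong M_{\lambda'}$ is precisely what is being cited from Mathas. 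So your write-up is entirely in line with the paper's intent, which was simply to quote this result rather than prove it.
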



Let $\thetaalg$ be the subgroup of automorphisms of $\H_{W}^{\tsr k}$ generated by
\[\theta_i = 1 \tsr \cdots \tsr 1 \tsr \theta \tsr 1 \tsr \cdots \tsr 1,\]
where the $\theta$ appears in the $i$-th tensor factor.
Let $\thetaalg^0$ be the subgroup of $\thetaalg$ generated by $\theta_i \theta_j$ for  $i,j \in [k]$.  This is a subgroup of index $2$ and consists of the involutions having an even number of $\theta$'s and the rest $1$'s.
\begin{proposition}
\label{p theta on nsH k}
The elements of $\thetaalg$ behave well upon restriction to $\aut(\nsH^{(k)}_W)$:
\begin{list}{\emph{(\roman{ctr})}} {\usecounter{ctr} \setlength{\itemsep}{1pt} \setlength{\topsep}{2pt}}
\item $\alpha(\sP_s^{(k)}) = \begin{cases} \sP^{(k)}_{s} & \text{if } \alpha \in \thetaalg^{0} \\
\sQ^{(k)}_{s} & \text{if } \alpha \in \theta_{1} \thetaalg^{0} \end{cases} \text{ for all } s \in S.$
\item $\nsH^{(k)}_W$ is invariant under the action of $\thetaalg$, i.e. $\alpha(\nsH^{(k)}_W) = \nsH^{(k)}_W$ for all $\alpha \in \thetaalg$.
\item  The restriction of an element of $\thetaalg$ to an automorphism of $\nsH^{(k)}_W$ (which is well-defined by (ii)) corresponds to the map $\thetaalg \to \aut (\nsH^{(k)}_W), \theta_i \mapsto \theta^{(k)}$, where $\theta^{(k)}: \nsH^{(k)}_W \to \nsH^{(k)}_W$ is the  $\mathbf{A}$-algebra homomorphism determined by  $\theta^{(k)}(\sP^{(k)}_s) = \sQ^{(k)}_s$ for all $s \in S$.
\item The homomorphism $\theta^{(k)}$ is an involution of  $\nsH^{(k)}_W$.
\item The inclusion  $\nsbr{\Delta}^{(k_l,k_r)}$ of \eqref{e delta kl kr definition} induces a map \newline $\aut( \nsH^{(k_l)}_W \tsr \nsH^{(k_r)}_W) \to \aut( \nsH^{(k)}_W)$, which sends  $\theta^{(k_l)} \tsr 1$ and $1 \tsr \theta^{(k_r)}$ to  $\theta^{(k)}$.
\end{list}
\end{proposition}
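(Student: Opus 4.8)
The plan is to reduce everything to the behavior of $\theta$ on the parabolic subalgebra $(\H_W)_{\{s\}}$ together with the explicit formula \eqref{e explicit coproduct}. The key point is this: $\theta$ stabilizes $(\H_W)_{\{s\}}$ (since $\theta(T_s)=-T_s^{-1}$ lies in it), and under the identification $(\H_W)_{\{s\}}\cong\H_2$ it restricts to the analogous automorphism of $\H_2$, which interchanges the idempotents $p_+$ and $p_-$ --- equivalently $\theta(\C_s)=-C_s$ and $\theta(C_s)=-\C_s$, an immediate computation from the Hecke relation (cf.\ \cite[\textsection 10]{GCT4}). Consequently, for each $i$ the automorphism $\theta_i$ sends a summand $\iota_s^{\tsr k}(p_{a_1}\tsr\cdots\tsr p_{a_k})$ of $\sP^{(k)}_s=\two^{k}\iota_s^{\tsr k}\bigl(\Delta^{(k)}(p_+)\bigr)$ to $\iota_s^{\tsr k}(p_{b_1}\tsr\cdots\tsr p_{b_k})$, where $b_j=a_j$ for $j\neq i$ and $b_i=-a_i$.

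Part (i) is then a parity count. For $\alpha=\prod_{i\in I}\theta_i\in\thetaalg$ with $I\subseteq[k]$, applying $\alpha$ to $\sP^{(k)}_s$ negates the entries indexed by $I$ in every summand, so the number of $i$ with $a_i=-$ changes by $|I|$ modulo $2$. Hence if $|I|$ is even the index vectors with an even number of such $i$ are permuted among themselves and $\alpha(\sP^{(k)}_s)=\sP^{(k)}_s$, while if $|I|$ is odd those vectors are carried bijectively onto the index vectors with an odd number of such $i$, so $\alpha(\sP^{(k)}_s)=\two^{k}\iota_s^{\tsr k}\bigl(\Delta^{(k)}(p_-)\bigr)=\sQ^{(k)}_s$. (One could equally induct on $k$ using \eqref{e sP^k definition1} with $k_l=1$, the base case being $\theta(\sP^{(1)}_s)=\theta(\C_s)=-C_s=\sQ^{(1)}_s$.)

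Parts (ii)--(iv) are formal consequences. Since $\sQ^{(k)}_s=\two^{k}-\sP^{(k)}_s\in\nsH^{(k)}_W$, part (i) shows every $\alpha\in\thetaalg$ carries the algebra generators $\sP^{(k)}_s$ of $\nsH^{(k)}_W$ into $\nsH^{(k)}_W$, so $\alpha(\nsH^{(k)}_W)\subseteq\nsH^{(k)}_W$; since every element of $\thetaalg\cong(\ZZ/2)^{k}$ equals its own inverse, this inclusion is an equality, which is (ii). Setting $\theta^{(k)}:=\theta_1|_{\nsH^{(k)}_W}$, parts (i) and (ii) show it is an algebra automorphism of $\nsH^{(k)}_W$ with $\theta^{(k)}(\sP^{(k)}_s)=\sQ^{(k)}_s$ for every $s$; thus it equals the homomorphism $\theta^{(k)}$ of the statement (in particular such a homomorphism exists), and it is an involution because $\theta_1$ is one --- this gives (iv). For (iii), two algebra homomorphisms on $\nsH^{(k)}_W$ that agree on the generators $\sP^{(k)}_s$ coincide; by (i), on these generators each $\theta_i$ acts as $\theta^{(k)}$ and each element of $\thetaalg^{0}$ acts as the identity, so the restriction homomorphism $\thetaalg\to\aut(\nsH^{(k)}_W)$ sends every $\theta_i$ to $\theta^{(k)}$ and annihilates $\thetaalg^{0}$.

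Finally, (v) is a direct check on generators via \eqref{e delta kl kr definition}. Applying $\theta^{(k_l)}\tsr 1$ to $\bar{\Delta}^{(k_l,k_r)}(\sP^{(k)}_s)=\sP^{(k_l)}_s\tsr\sP^{(k_r)}_s+\sQ^{(k_l)}_s\tsr\sQ^{(k_r)}_s$ and using $\theta^{(k_l)}(\sP^{(k_l)}_s)=\sQ^{(k_l)}_s$, $\theta^{(k_l)}(\sQ^{(k_l)}_s)=\sP^{(k_l)}_s$ gives $\sQ^{(k_l)}_s\tsr\sP^{(k_r)}_s+\sP^{(k_l)}_s\tsr\sQ^{(k_r)}_s$; summing this with $\bar{\Delta}^{(k_l,k_r)}(\sP^{(k)}_s)$ yields $(\sP^{(k_l)}_s+\sQ^{(k_l)}_s)\tsr(\sP^{(k_r)}_s+\sQ^{(k_r)}_s)=\two^{k}$, so the image equals $\two^{k}-\bar{\Delta}^{(k_l,k_r)}(\sP^{(k)}_s)=\bar{\Delta}^{(k_l,k_r)}(\sQ^{(k)}_s)=\bar{\Delta}^{(k_l,k_r)}\bigl(\theta^{(k)}(\sP^{(k)}_s)\bigr)$. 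Thus $\theta^{(k_l)}\tsr 1$ stabilizes the image of $\bar{\Delta}^{(k_l,k_r)}$ and the automorphism of $\nsH^{(k)}_W$ it induces is $\theta^{(k)}$; the same computation handles $1\tsr\theta^{(k_r)}$ (and, consistently, $\theta^{(k_l)}\tsr\theta^{(k_r)}$ induces $\theta^{(k)}\circ\theta^{(k)}=\id$). I expect the one genuinely non-formal ingredient to be the key fact above --- that $\theta$ stabilizes $(\H_W)_{\{s\}}$ and there restricts to the automorphism of $\H_2$ swapping $p_+\leftrightarrow p_-$ --- after which the parity count of (i) and routine algebra finish the proof.
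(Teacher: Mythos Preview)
Your proof is correct and follows essentially the same approach as the paper: reduce (i) to the rank-one case via $\iota_s$, use the key observation $\theta(p_+)=p_-$ together with the explicit expansion \eqref{e explicit coproduct} and a parity count, and then deduce (ii)--(v) formally from (i). The paper compresses all of (ii)--(v) into ``the remaining statements follow easily,'' so your write-up simply supplies the routine details the paper omits.
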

\begin{proof}
It is enough to check (i) for  $W = \S_2$, and this can be seen directly from the observation $\theta(p_+) = p_-$ and by comparing
\[ \frac{1}{[2]^k} \sQ^{(k)}_1 = \Delta^{(k)}(p_-) =  \sum_{\substack{ \mathbf{a} \in \{ +, - \}^{k} \\ |\{ i : a_i = - \}| \text{ is odd}} } p_{a_1} \tsr \dots \tsr p_{a_k}\]
with the similar expression for  $\Delta^{(k)}(p_+)$ in \eqref{e explicit coproduct}.  The remaining statements follow easily.
\end{proof}

There are also anti-automorphisms $1^{\text{op}} := 1^{\text{op}} \tsr 1^{\text{op}} \tsr \cdots \tsr 1^{\text{op}}$ and $(\theta^{(k)})^\text{op} := 1^{\text{op}} \circ \theta^{(k)}$ of $\nsH^{(k)}_W$, where $\theta^{(k)}$ is defined in Proposition \ref{p theta on nsH k} (iii). We also write $M^\dualone$ (respectively  $M^\dualtheta$) for the  $\nsH^{(k)}_W$-module dual to $M$ corresponding to the anti-automorphism  $1^\text{op}$ (respectively  $(\theta^{(k)})^\text{op}$).  The next proposition is immediate from Proposition \ref{p theta on nsH k} (v) and definitions.
\begin{proposition}
\label{p 1 op theta op representation facts}
Let $\nsbr{M}_{l}$ be an $\nsH^{(k_{l})}_{W}$-module and $\nsbr{M}_{r}$ an $\nsH^{(k_{r})}_{W}$-module and assume that these are free and finite-dimensional as $\mathbf{A}$-modules.
Then
\[
\begin{array}{ccc}
 (\nsbr{M}_l \tsr \nsbr{M}_r)^\dualone \cong \nsbr{M}_l^\dualone \tsr \nsbr{M}_r^\dualone \cong \nsbr{M}_l^\dualtheta \tsr \nsbr{M}_r^\dualtheta, \\
 (\nsbr{M}_l \tsr \nsbr{M}_r)^\dualtheta \cong \nsbr{M}_l^\dualone \tsr \nsbr{M}_r^\dualtheta \cong \nsbr{M}_l^\dualtheta \tsr \nsbr{M}_r^\dualone.
\end{array}
\]
\end{proposition}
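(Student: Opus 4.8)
The plan is to reduce the whole statement to one elementary fact about duals of outer tensor products together with Proposition~\ref{p theta on nsH k}(iv)--(v). First I would record the observation that if $A,B$ are $\mathbf{A}$-algebras with anti-automorphisms $S_A,S_B$, and $M$ (resp.\ $N$) is an $A$-module (resp.\ $B$-module) free and finite-dimensional over $\mathbf{A}$, then the canonical $\mathbf{A}$-module isomorphism $(M\tsr N)^{*}\cong M^{*}\tsr N^{*}$ --- which exists precisely because of the finiteness hypothesis in the proposition --- carries the $A\tsr B$-module structure on $(M\tsr N)^{*}$ attached to the anti-automorphism $S_A\tsr S_B$ to the outer tensor product of the $S_A$-dual of $M$ with the $S_B$-dual of $N$. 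This is a one-line check against the pairing $\langle m\tsr n,\ \phi\tsr\psi\rangle=\langle m,\phi\rangle\langle n,\psi\rangle$.

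Next I would apply this with $A=\nsH^{(k_l)}_W$, $B=\nsH^{(k_r)}_W$, $M=\bar M_l$, $N=\bar M_r$, and $(S_A,S_B)$ ranging over the four pairs in $\{1^{\text{op}},(\theta^{(k_l)})^{\text{op}}\}\times\{1^{\text{op}},(\theta^{(k_r)})^{\text{op}}\}$. For each pair this identifies the corresponding outer tensor product of duals, viewed as an $\nsH^{(k)}_W$-module by restriction along $\bar\Delta^{(k_l,k_r)}$ of \eqref{e delta kl kr definition}, with the restriction along $\bar\Delta^{(k_l,k_r)}$ of the $(S_A\tsr S_B)$-dual of $\bar M_l\tsr\bar M_r$. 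Thus the proposition becomes a question about how the anti-automorphisms $S_A\tsr S_B$ of $\nsH^{(k_l)}_W\tsr\nsH^{(k_r)}_W$ pull back along $\bar\Delta^{(k_l,k_r)}$ to anti-automorphisms of $\nsH^{(k)}_W$.

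For those pullbacks there are two inputs. First, $1^{\text{op}}$ is, at every level, the restriction of the $k$-fold tensor power of $1^{\text{op}}$ on $\H_W$, so it commutes on the nose with all the inclusions; in particular $(1^{\text{op}}\tsr 1^{\text{op}})\circ\bar\Delta^{(k_l,k_r)}=\bar\Delta^{(k_l,k_r)}\circ 1^{\text{op}}$. Second, Proposition~\ref{p theta on nsH k}(v) gives $\bar\Delta^{(k_l,k_r)}\circ\theta^{(k)}=(\theta^{(k_l)}\tsr 1)\circ\bar\Delta^{(k_l,k_r)}=(1\tsr\theta^{(k_r)})\circ\bar\Delta^{(k_l,k_r)}$, and since the induced map $\aut(\nsH^{(k_l)}_W\tsr\nsH^{(k_r)}_W)\to\aut(\nsH^{(k)}_W)$ is a group homomorphism, combining this with the involutivity of $\theta^{(k)}$ (Proposition~\ref{p theta on nsH k}(iv)) yields $(\theta^{(k_l)}\tsr\theta^{(k_r)})\circ\bar\Delta^{(k_l,k_r)}=\bar\Delta^{(k_l,k_r)}$. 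Composing these relations and using $(\theta^{(j)})^{\text{op}}=1^{\text{op}}\circ\theta^{(j)}$ gives $((\theta^{(k_l)})^{\text{op}}\tsr(\theta^{(k_r)})^{\text{op}})\circ\bar\Delta^{(k_l,k_r)}=\bar\Delta^{(k_l,k_r)}\circ 1^{\text{op}}$ and $((\theta^{(k_l)})^{\text{op}}\tsr 1^{\text{op}})\circ\bar\Delta^{(k_l,k_r)}=(1^{\text{op}}\tsr(\theta^{(k_r)})^{\text{op}})\circ\bar\Delta^{(k_l,k_r)}=\bar\Delta^{(k_l,k_r)}\circ(\theta^{(k)})^{\text{op}}$. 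Reading the first identity through the outer-tensor-product fact produces the first displayed line of the proposition (indeed the three modules there are literally equal, not merely isomorphic), and reading the second produces the second line.

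Everything here is routine, so I do not expect a real obstacle; the one place that demands care is the bookkeeping of which anti-automorphism lives on which algebra, and checking at each use of Proposition~\ref{p theta on nsH k}(v) that the relevant automorphism of $\nsH^{(k_l)}_W\tsr\nsH^{(k_r)}_W$ genuinely stabilizes the image of $\bar\Delta^{(k_l,k_r)}$ --- which it does, since each of $\theta^{(k_l)}\tsr 1$ and $1\tsr\theta^{(k_r)}$, hence all their products, does.
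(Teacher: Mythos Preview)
Your argument is correct and matches the paper's approach exactly: the paper states that the proposition is ``immediate from Proposition~\ref{p theta on nsH k}(v) and definitions,'' and what you have written is a careful unpacking of precisely that --- the outer-tensor-product compatibility of duals plus the pullback of the various $(\theta^{(j)})^{\text{op}}$ along $\bar\Delta^{(k_l,k_r)}$.
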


\section{The nonstandard braid relation}
\label{s A generalization of the braid relation}
Here we determine the irreducible representations of the nonstandard Hecke algebra $\nsH^{(k)}_3$ and find that it has a two-dimensional irreducible with defining constant $\two^k T_j(\frac{1}{\two})$ for each $j \in [k]$.  We deduce from this the nonstandard braid relation \eqref{e nonstandard braid relation 0} for $\nsH^{(k)}_3$.

\subsection{}
In this subsection we let $W$ be any dihedral group, whereas in \textsection\ref{ss chebyshev polynomial intro}--\ref{ss nonstandard braid relation H3} we focus on the case  $W = \S_3$.
It will be shown in the course of the next two sections that all of the two-dimensional irreducible representations of $\field \nsH^{(k)}_W$ (for a suitable field $\field$) are of the form $\nsbr{X}^{(k)}(c)\cong \field^2,$ for some constant $c \in \field$, defined by the following matrices giving the action of $\sP^{(k)}_i$ on $\nsbr{X}^{(k)}(c)$:
\be
\label{e bar X action}
\sP^{(k)}_1 \mapsto \mat{[2]^{k} & c \\ 0 & 0},\ \sP^{(k)}_2 \mapsto \mat{0 & 0 \\ 1 & [2]^{k} }.
\ee
Here we have specified a basis $(x_1, x_2)$ for  $\nsbr{X}^{(k)}(c)$ and are thinking of matrices as acting on the left on column vectors, so that the $j$-th column of these matrices gives the coefficients of $\sP^{(k)}_i x_j$ in the basis $(x_1, x_2)$.
The map $\field \nsH^{(k)}_W \to \End_\field(\field^2)$ specified by \eqref{e bar X action} only defines an algebra homomorphism for special values of  $c$ (i.e. $\nsbr{X}^{(k)}(c)$ is only a representation of  $\field \nsH^{(k)}_W$ for special values of  $c$), and these values will be determined in the course of the proofs below.

Recall that a representation of an algebra over a field  $\field$ is \emph{absolutely irreducible} if it is irreducible over any field extension of  $\field$ (the appendix of \cite{Mathas} is a good quick reference for this and other basic definitions and results about finite dimensional algebras over a field).
Suppose that $\nsbr{X}^{(k)}(c)$ is a  $\field \nsH^{(k)}_W$-module.
Observe that $\nsbr{X}^{(k)}(c)$ has a one-dimensional submodule if and only if $\mat{0 \\ 1}$ or $\mat{\two^{k} \\ -1}$ spans a submodule.  Since  $\mat{0 \\ 1}$ spans a submodule if and only if $c = 0$ and $\mat{\two^{k} \\ -1}$ spans a submodule if and only if $c = \f^k$ (recall  $f = [2]^2$), we conclude that
\be
\label{e bar X irreducible}
\text{$\nsbr{X}^{(k)}(c)$ is absolutely irreducible  $\iff \ c \not\in \{0,\f^k\}$.} \\
\ee
One also checks easily that if $\nsbr{X}^{(k)}(c), \nsbr{X}^{(k)}(c')$ are  $\field\nsH^{(k)}_W$-modules, then 
\be
\label{e bar X distinct}
\nsbr{X}^{(k)}(c) \cong \nsbr{X}^{(k)}(c') \iff c=c'.
\ee
\begin{proposition}
\label{p main tensor product computation}
Let  $k_l, k_r$ be positive integers with  $k_l+k_r = k$.  Suppose that $\nsbr{X}_l=\nsbr{X}^{(k_l)}(c_l), \nsbr{X}_r = \nsbr{X}^{(k_r)}(c_r)$ are irreducible $\field \nsH^{(k_l)}_W, \field \nsH^{(k_r)}_W$-modules, respectively with constants $c_l, c_r \in K$. Put  $a_l = \sqrt{c_l},\ a_r = \sqrt{c_r}$ and define
\be
\label{e decompose xl xr constant}
a_\pm = a_l a_r \pm \sqrt{ (\f^{k_l} - c_l) (\f^{k_r} - c_r)}.
\ee
Assume that $a_l a_r, a_\pm \in \field$ and that $\field$ is an integral domain in which $2 \ne 0$.  Then $\nsbr{X}_l \tsr \nsbr{X}_r$ is a  $\field \nsH^{(k)}_W$-module via  $\nsbr{\Delta}^{(k_l,k_r)}$ (see \eqref{e delta kl kr definition}) with the following decomposition into irreducibles
\be
\label{e xl xr decomp}
\nsbr{X}_l \tsr \nsbr{X}_r \cong
\begin{cases}
\nsbr{X}^{(k)}(a_+^2) \oplus \nsbr{X}^{(k)}(a_-^2) & \text{$a_+^2 \neq \f^k$ and  $a_- \neq 0$}, \\
\nsbr{\epsilon}_+ \oplus \nsbr{\epsilon}_- \oplus \nsbr{X}^{(k)}(a_-^2) & \text{$a_+^2 = \f^k$ and  $a_- \neq 0$}, \\
\nsbr{\epsilon}_1 \oplus \nsbr{\epsilon}_2 \oplus \nsbr{X}^{(k)}(a_+^2) &
\text{$a_+^2 \neq \f^k$ and  $a_- = 0$}, \\
\nsbr{\epsilon}_+ \oplus \nsbr{\epsilon}_- \oplus \nsbr{\epsilon}_1 \oplus \nsbr{\epsilon}_2 & \text{$a_+^2 = \f^k$ and  $a_- = 0$},
\end{cases}
\ee
where $\nsbr{\epsilon}_1$ and $\nsbr{\epsilon}_2$ are one-dimensional representations given by
\[
\begin{array}{cll}
\nsbr{\epsilon}_1: & \sP^{(k)}_1 \mapsto \two^k, & \sP^{(k)}_2 \mapsto 0, \\
\nsbr{\epsilon}_2: & \sP^{(k)}_1 \mapsto 0, & \sP^{(k)}_2 \mapsto \two^k.
\end{array}
\]
\end{proposition}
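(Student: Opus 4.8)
The statement is a linear-algebra computation on the four-dimensional module $\bar X_l\tsr\bar X_r$, so my plan is to organize everything around the eigenspaces of $\sP^{(k)}_1$. First I would record two structural facts. Since $\tfrac1{\two^k}\sP^{(k)}_s=\Delta^{(k)}_s(p_+)$ is idempotent, the relation $(\sP^{(k)}_s)^2=\two^k\sP^{(k)}_s$ holds in every $\nsH^{(k)}_W$-module (and for the matrices in \eqref{e bar X action} directly); hence, over any $\field$ in which $\two$ is invertible — which one may assume, passing to $\field[\tfrac1{\two}]$ if need be — each $\sP^{(k)}_s$ is diagonalizable with eigenvalues in $\{0,\two^k\}$. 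And from \eqref{e bar X action}, a module is isomorphic to $\bar X^{(k)}(c)$ precisely when it is spanned by $w$ and $\sP^{(k)}_2 w$ with these independent and $\sP^{(k)}_1 w=\two^k w$, $\sP^{(k)}_1\sP^{(k)}_2 w=c\,w$; when instead $\sP^{(k)}_2 w\in\field w$ the same relations force $c\in\{0,\f^k\}$ and $w$ spans $\bar\epsilon_1$ (if $c=0$) or $\bar\epsilon_+$ (if $c=\f^k$).

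Next, write $d_*=c_*-\f^{k_*}$, so that by \eqref{e bar X irreducible} irreducibility of the factors makes $c_l,d_l,c_r,d_r$, and hence their product, nonzero. I would fix eigenvectors $e_1,e_0$ of $\sP^{(k_l)}_1$ on $\bar X_l$ (subscript $1$ for eigenvalue $\two^{k_l}$) and $f_1,f_0$ of $\sP^{(k_r)}_1$ on $\bar X_r$, and observe from $\bar{\Delta}^{(k_l,k_r)}$ and $\sP^{(k)}_s=\sP^{(k_l)}_s\tsr\sP^{(k_r)}_s+\sQ^{(k_l)}_s\tsr\sQ^{(k_r)}_s$ that the $\two^k$-eigenspace $V_1$ of $\sP^{(k)}_1$ on $\bar X_l\tsr\bar X_r$ is $\langle e_1\tsr f_1,\ e_0\tsr f_0\rangle$, with projection $\Pi_1=\tfrac1{\two^k}\sP^{(k)}_1$ onto it along $V_0:=\ker\sP^{(k)}_1$. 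The heart of the proof is to expand $\sP^{(k_*)}_2,\sQ^{(k_*)}_2$ in the bases $(e_1,e_0),(f_1,f_0)$ and multiply out the operator $B:=\Pi_1\sP^{(k)}_2|_{V_1}$; in the basis $(e_1\tsr f_1,\ e_0\tsr f_0)$ this should come out to
\[
B=\frac1{\two^k}\begin{pmatrix} c_lc_r+d_ld_r & 2c_lc_rd_ld_r\\ 2 & c_lc_r+d_ld_r\end{pmatrix},
\]
with eigenvalues $\tfrac1{\two^k}\bigl(\sqrt{c_lc_r}\pm\sqrt{d_ld_r}\bigr)^2$; the hypotheses $c_l=a_l^2$, $c_r=a_r^2$ and $d_ld_r=(\f^{k_l}-c_l)(\f^{k_r}-c_r)$ identify these as $\tfrac1{\two^k}a_\pm^2$. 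Since $\sP^{(k)}_1 w=\two^k w$ and $\sP^{(k)}_1\sP^{(k)}_2 w=\two^k\,Bw$ for $w\in V_1$, an eigenvector $w_\pm$ of $B$ generates the submodule $\langle w_\pm,\sP^{(k)}_2 w_\pm\rangle$, which by the first paragraph is $\cong\bar X^{(k)}(a_\pm^2)$ as long as $a_\pm^2\notin\{0,\f^k\}$. One has $a_+^2\neq a_-^2$ (equivalently $c_lc_rd_ld_r\neq0$), so $w_+,w_-$ span $V_1$; in the first case of \eqref{e xl xr decomp} both submodules are irreducible and nonisomorphic by \eqref{e bar X distinct}, so they meet in $0$ and exhaust $\bar X_l\tsr\bar X_r$ by dimension.

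The hard part will be the three degenerate cases, where a would-be $\bar X^{(k)}$-summand collapses ($a_+^2=\f^k$ or $a_-=0$): a priori $\langle w_\pm,\sP^{(k)}_2 w_\pm\rangle$ could be an \emph{indecomposable} two-dimensional extension — for instance $\bar X^{(k)}(\f^k)$ is a non-split extension of $\bar\epsilon_+$ by $\bar\epsilon_-$ — so one must verify the collapse genuinely splits into two one-dimensional modules. I would do this by finishing the explicit computation: $a_+^2=\f^k$ turns out to be equivalent to $c_ld_r=c_rd_l$ (using $\f^k=(c_l-d_l)(c_r-d_r)$), and under that relation a short calculation produces an explicit vector of $V_0$ annihilated by $\sP^{(k)}_2$ (giving $\bar\epsilon_-$) and shows $\sP^{(k)}_2 w_+=\two^k w_+$ (giving $\bar\epsilon_+$); symmetrically, $a_-=0$ is equivalent to $c_lc_r=d_ld_r$, under which $\sP^{(k)}_2 w_-=0$ (giving $\bar\epsilon_1$) and, swapping the roles of $\sP^{(k)}_1,\sP^{(k)}_2$, a copy of $\bar\epsilon_2$ appears; in each case one checks that the resulting one-dimensional pieces, together with the surviving $\bar X^{(k)}(a_\mp^2)$ when present, are independent of total dimension $4$. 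Two bookkeeping points finish it: the hypothesis provides only $a_la_r,a_\pm\in\field$, so I would run the computation over $\field'=\field[a_l]$ — which contains $a_r=(a_la_r)a_l^{-1}$, as $a_la_r\neq0$, and $\sqrt{d_ld_r}=\tfrac12(a_+-a_-)\in\field$ — and then descend the isomorphism type back to $\field$, the summands in \eqref{e xl xr decomp} all being defined over $\field$ and pairwise nonisomorphic; and the division by $\two$ along the way is harmless, the final decomposition being an isomorphism of $\nsH^{(k)}_W$-modules that are free over $\field$.
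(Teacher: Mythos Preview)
Your approach is correct and runs parallel to the paper's, but is organized somewhat differently. The paper works directly with the $4\times 4$ matrices \eqref{e matrix action on tensor product} for $\sP^{(k)}_1,\sP^{(k)}_2$ in the tensor-product basis and simply \emph{writes down} explicit vectors $z_{1\pm},z_{2\pm}$ (your $w_\pm$ and $\sP^{(k)}_2 w_\pm$, in coordinates), then checks by hand that each pair spans a submodule and that the action on it is \eqref{e bar X action} with constant $a_\pm^2$; the degenerate cases are handled by replacing a degenerate $z$ by an ad hoc vector $z'_{2\pm}$. Your version is more conceptual: you first diagonalize $\sP^{(k)}_1$, then reduce the whole problem to diagonalizing the $2\times 2$ operator $B=\Pi_1\sP^{(k)}_2|_{V_1}$, which makes the appearance of $a_\pm^2$ as eigenvalues of $\sP^{(k)}_1\sP^{(k)}_2$ completely transparent. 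The paper does implicitly use your $V_1$ --- it remarks that $z_{i+},z_{i-}$ span the $\bar\epsilon_+$-isotypic piece of the $(\H_W)_{\{s_i\}}$-restriction --- but never isolates $B$.

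One caution on bookkeeping. The paper's explicit vectors \eqref{e zi + - def} are written using only $a_la_r$ and $a_\pm$ (never $a_l,a_r$ separately) and with no denominators, so they live in $\field^4$ as stated; no extension or localization is needed, and the submodule/basis claims are bare linear algebra over $\field$. Your route introduces $\tfrac{1}{\two}$ (for $\Pi_1$) and, at the end, an extension $\field[a_l]$ followed by a descent argument that is a bit breezy over a general integral domain. This is fixable --- e.g.\ once you know the eigenvalues of $B$, you can write the eigenvectors using only $\sqrt{c_lc_rd_ld_r}=a_la_r\cdot\tfrac{1}{2}(a_+-a_-)\in\field$ (here $2\neq 0$ in the domain $\field$, but you still need $2^{-1}$), clear denominators, and land in $\field^4$ --- but it would be cleaner to note that your $w_\pm$, once expressed in the original tensor basis, are exactly (scalar multiples of) the paper's $z_{1\pm}$, so no extension was ever necessary. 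Likewise, your claim in the degenerate cases that $\sP^{(k)}_2 w_+=\two^k w_+$ (resp.\ $\sP^{(k)}_2 w_-=0$) does not follow formally from $Bw_\pm=\two^k w_\pm$ (resp.\ $0$), which only controls the $V_1$-component of $\sP^{(k)}_2 w_\pm$; it genuinely uses the relation $c_ld_r=c_rd_l$ (resp.\ $c_lc_r=d_ld_r$), as you say, and is where the paper's replacement vectors $z'_{2\pm}$ are doing the same work.
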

\begin{remark}
A direct calculation shows that, with the appropriate convention for square roots,
\[
\begin{array}{lll}
a_+^2 = \f^k & \iff & \f^{k_r} c_l = \f^{k_l} c_r, \\
a_- = 0 & \iff & \f^{k} = \f^{k_l} c_r + \f^{k_r}c_l.
\end{array}
\]
Note that by \eqref{e bar X irreducible} and \eqref{e decompose xl xr constant}, our assumptions on $\field$  and that $\nsbr{X}_l$ and $\nsbr{X}_r$ are irreducible imply $a_+^2 \neq a_-^2$.  Thus the last three cases of \eqref{e xl xr decomp} are the only degenerate cases that can occur.
\end{remark}
\begin{proof}
Let $(x_{l1}, x_{l2})$ and $(x_{r1}, x_{r2})$ be bases for $\nsbr{X}_l$ and $\nsbr{X}_r$, respectively, corresponding to the matrices in (\ref{e bar X action}).
Using (\ref{e sP^k definition2}) we compute the matrices that give the action of the $\sP^{(k)}_i$ on the $\field \nsH^{(k)}_W$-module  $\nsbr{X}_l \tsr \nsbr{X}_r$ in the basis $(x_{l1} \tsr x_{r1}, x_{l1} \tsr x_{r2}, x_{l2} \tsr x_{r1}, x_{l2} \tsr x_{r2})$:
\be \label{e matrix action on tensor product}
\sP^{(k)}_1 \mapsto \mat{ [2]^k     & [2]^{k_l}c_r  & [2]^{k_r}c_l  & 2c_l c_r \\
        0       & 0             & 0             & -[2]^{k_r} c_l \\
        0       & 0             & 0             & -[2]^{k_l} c_r \\
        0       & 0             & 0             & [2]^k }, \
\sP^{(k)}_2 \mapsto \mat{   [2]^k          & 0          & 0         & 0 \\
        -[2]^{k_l}     & 0          & 0         & 0 \\
        -[2]^{k_r}     & 0          & 0         & 0 \\
        2              & [2]^{k_r}  & [2]^{k_l} & [2]^k }
\ee
Now set
\be
\label{e zi + - def}
z_{1+} = \mat{ a_l a_r a_+ \\ -\two^{k_r} c_l \\ -\two^{k_l} c_r \\ \two^{k} },
z_{2+} = \mat{ [2]^k a_l a_r a_+ \\ -[2]^{k_l} a_l a_r a_+  \\  - \two^{k_r} a_l a_r a_+  \\ a_+^2 },
z_{1-} = \mat{ a_l a_r a_- \\ -\two^{k_r} c_l \\ -\two^{k_l} c_r \\ \two^{k} },
z_{2-} = \mat{ [2]^k a_l a_r a_- \\ -[2]^{k_l} a_l a_r a_-  \\  - [2]^{k_r} a_l a_r a_-  \\ a_-^2}.
\ee


The vectors  $z_{1+}, z_{2+}, z_{1-}, z_{2-}$ were found using the form of the matrices in (\ref{e matrix action on tensor product}) to ensure that $z_{i+}$ and $z_{i-}$ span the $\nsbr{\epsilon}_+$-isotypic component of $\Res_{\{ s_i \}} (\nsbr{X}_l \tsr \nsbr{X}_r)$ for $i = 1,2$.
A direct computation shows that $\field \{z_{1+}, z_{2+}\}$ (respectively  $\field \{z_{1-}, z_{2-}\}$) is a submodule of $\nsbr{X}_l \tsr \nsbr{X}_r$ provided  $a_+$ (respectively  $a_-$) is a solution to the following quadratic equation in the variable  $y$
\be \label{e quadratic a+-}
y^2 - 2 a_l a_r y - \f^{k} + \f^{k_r} c_l + \f^{k_l} c_r = 0.
\ee
The solutions to this quadratic equation are given by (\ref{e decompose xl xr constant}), hence
$\field \{z_{1+}, z_{2+}\}$ and $\field \{z_{1-}, z_{2-}\}$ are submodules.

By comparing the first and last components of $z_{i+}, z_{i-}$, one checks that if $a_+^2 \ne \f^k$ (respectively  $a_- \ne 0$), then $\field \{z_{1+}, z_{2+}\}$ (respectively $\field \{z_{1-}, z_{2-}\}$) is two-dimensional.  Moreover, by the remark before the proof, $a_+^2 \ne \f^k$ (respectively  $a_- \ne 0$) implies $\nsbr{X}^{(k)}(a^2_+)$ (respectively $\nsbr{X}^{(k)}(a^2_-)$) is irreducible.
Then with the assumptions of the first case of \eqref{e xl xr decomp}, $(z_{1+}, z_{2+}, z_{1-}, z_{2-})$ is a basis of $\nsbr{X}_l \tsr \nsbr{X}_r$ and the action of $\sP^{(k)}_i$ in this basis is given by
\be
\sP^{(k)}_1 \mapsto \mat{[2]^k &  a_+^2 & 0 & 0  \\
0 & 0 & 0 & 0 \\
0 & 0 & [2]^k & a_-^2 \\
0 & 0 & 0 & 0 }, \
\sP^{(k)}_2 \mapsto \mat{0 & 0 & 0 & 0 \\
1 & [2]^k & 0 & 0 \\
0 & 0 & 0 & 0 \\
0 & 0 & 1 & [2]^k },
\ee
which verifies (\ref{e xl xr decomp}) in this case.

Next suppose $a_+^2 = \f^k$ and $a_- \neq 0$, and define $z_{2+}'$ by
\[
z_{2+}' := \mat{ 0 \\ 1 \\  -1  \\ 0 }.
\]
Then $(z_{1+}, z_{2+}', z_{1-}, z_{2-})$ is a basis and the action of $\sP^{(k)}_i$ in this basis is given by
\be
\sP^{(k)}_1 \mapsto \mat{[2]^k & 0 & 0 & 0  \\
0 & 0 & 0 & 0 \\
0 & 0 & [2]^k & a_-^2 \\
0 & 0 & 0 & 0 }, \
\sP^{(k)}_2 \mapsto \mat{\two^k & 0 & 0 & 0 \\
0 & 0 & 0 & 0 \\
0 & 0 & 0 & 0 \\
0 & 0 & 1 & [2]^k }.
\ee
The second case of (\ref{e xl xr decomp}) follows.
The third and fourth cases of \eqref{e xl xr decomp} are similar: if $a_+^2 \ne \f^k$ and  $a_- = 0$, then $(z_{1+}, z_{2+}, z_{1-}, z_{2-}')$ is a basis, where
\[
z_{2-}'  := \mat{ [2]^k a_l a_r  \\ -[2]^{k_l} a_l a_r   \\  - [2]^{k_r} a_l a_r \\ a_-}.
\]
If $a_+^2 = \f^k$ and  $a_- = 0$, then $(z_{1+}, z_{2+}', z_{1-}, z_{2-}')$ is a basis of $\nsbr{X}_l \tsr \nsbr{X}_r$.
%
\end{proof}

\subsection{}
\label{ss chebyshev polynomial intro}
The  $k$-th Chebyshev polynomial $T_k(x)$ of the first kind is the polynomial expressing $\cos(k \theta)$ in terms of $x = \cos(\theta)$. Chebyshev polynomials appear in many areas of mathematics including numerical analysis, special functions, approximation theory, and ergodic theory. Explicit formulas, recurrences, and generating functions are known for Chebyshev polynomials \cite{Rivlin}, though in this paper all we need are simple trigonometric identities.
Recall from the introduction the constants
\be a^{(k)} = T_k \Big( \frac{1}{[2]} \Big),\ k \geq 1.\ee
We will see that $\two^k a^{(j)}$, $j \in [k]$ are the defining constants of the two-dimensional irreducible representations of $\nsH^{(k)}_3$.  The first few coefficients  $\two^k a^{(k)}$ are
\be
\begin{array}{llrll}
\two a^{(1)} &=& 1 & = &\hphantom{\u^4 - 4\u^2 -}1\\
\two^2 a^{(2)} &=& - \f+2 &=& \hphantom{\u^4 3} -\u^2 \hphantom{-2 3} - \u^{-2} \\
\two^3 a^{(3)} &=& - 3\f+4 &=& \hphantom{\u^4} -3\u^2 - 2 - 3\u^{-2} \\
\two^4 a^{(4)} &=& \f^2-8\f+8 &=& \u^4 - 4\u^2 - 2 - 4\u^{-2} + \u^{-4}.
\end{array}
\ee

Though Chebyshev polynomials are usually defined for $k \geq 0$, it is convenient to define them for all integers $k$.  Note that the definition above still makes sense and we have  $T_k(x) = T_{-k}(x)$.  Accordingly, we extend the definition $a^{(k)} = T_k (\frac{1}{[2]})$ to all  $k \in \ZZ$.  Also let  $T^1_k(x)$ be the element of $\sqrt{1-x^2} \ZZ[x]$ obtained by expressing $\sin(k x)$ in terms of  $x = \cos(\theta)$; more precisely, we should write $T^1_k(x) \in y \ZZ[x] \subseteq \ZZ[x,y]/(x^2 + y^2 - 1)$.  The $k$-th Chebyshev polynomial $U_k(x)$ of the second kind is the polynomial expressing $\sin((k+ 1) \theta)/\sin(\theta)$ in terms of $x = \cos(\theta)$. Then we have $T^1_k(x) = \sqrt{1-x^2} U_{k-1}(x)$ for  $k \geq 1$.  Again, we may allow $k$ to be any integer  and there holds $T^1_{-k}(x) = -T^1_{k}(x)$.

The calculation decomposing the tensor products $\nsbr{X}_l \tsr \nsbr{X}_r$ (Proposition \ref{p main tensor product computation}) and the following identity for Chebyshev polynomials are all we need to determine the irreducibles of $\nsH^{(k)}_3$.

\begin{lemma}
\label{l Chebyshev quadratic identity}
For non-negative integers $k_l, k_r$, there holds the following identity for Chebyshev polynomials (omitting the dependence of $T_k(x)$ on $x$)
\[ T_{k_l\mp k_r} = T_{k_l} T_{k_r} \pm T^1_{k_l} T^1_{k_r}.\]
Hence
\[ [2]^ka^{(k_l\mp k_r)} = [2]^{k_l} a^{(k_l)}\, [2]^{k_r} a^{(k_r)} \pm \sqrt{\big(f^{k_l}-([2]^{k_l}a^{(k_l)})^2\big) \big(f^{k_r}-([2]^{k_r}a^{(k_r)})^2\big) }.\]
\end{lemma}
\begin{proof}
The first statement is immediate from the trigonometric identity for the cosine of a sum of angles:
\begin{align*}
  T_{k_l} T_{k_r} \pm T^1_{k_l} T^1_{k_r} = \cos(k_{l} \theta) \cos(k_{r} \theta ) \pm \sin(k_{l} \theta) \sin(k_{r} \theta )
= \cos((k_{l} \mp k_{r})\theta) = T_{k_l \mp k_r}.
\end{align*}
The second statement then follows by setting  $x = \frac{1}{[2]}$, multiplying both sides by  $[2]^k$ and using  $T_k^1 = \sqrt{1-T_k^2}$.
\end{proof}

\begin{theorem}
\label{t nsH S3 2D reps}
Define $\nsbr{M}^{(k)} = \nsbr{X}^{(k)}(([2]^k a^{(k)})^2)$.  For $\field = \QQ(\u),$ the irreducible representations of $\field \nsH^{(k)}_3$ consist of the trivial and sign representations
\[ \nsbr{\epsilon}_+, \ \nsbr{\epsilon}_-, \]
and the $k$ two-dimensional representations
  \[\epsilon_+^{\tsr k - 1} \tsr \nsbr{M}^{(1)}, \epsilon_+^{\tsr k-2} \tsr \nsbr{M}^{(2)}, \dots, \nsbr{M}^{(k)}. \]
\end{theorem}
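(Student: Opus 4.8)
The plan is to induct on $k$, carrying along the stronger assertion that $\field\nsH^{(k)}_3$ is split semisimple of dimension $4k+2$. The base case $k=1$ is the classical representation theory of $\H_3=\nsH^{(1)}_3$: over $\field=\QQ(\u)$ it is split semisimple, $\cong\field^2\times M_2(\field)$, with $\bar\epsilon_+,\bar\epsilon_-$ and a \emph{unique} two-dimensional irreducible. Since $\sP^{(1)}_s=\C_s$ satisfies $\C_s^2=\two\C_s$ and the braid relation $\C_1\C_2\C_1-\C_1=\C_2\C_1\C_2-\C_2$, the module $\bar X^{(1)}(1)$ of \eqref{e bar X action} is a genuine $\H_3$-module, and it is irreducible by \eqref{e bar X irreducible} because $1\notin\{0,\f\}$; hence it is the two-dimensional irreducible, and as $(\two^1 a^{(1)})^2=(\two\,T_1(\tfrac1{\two}))^2=1$ it equals $\bar M^{(1)}$.

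For the inductive step, first exhibit the claimed irreducibles. The modules $\bar\epsilon_\pm$ are evidently one-dimensional representations. For $1\le j\le k-1$, $\epsilon_+^{\tsr k-j}\tsr\bar M^{(j)}$ is a well-defined $\field\nsH^{(k)}_3$-module by iterated restriction along the inclusions \eqref{e delta kl kr definition} ($\bar M^{(j)}$ being a valid $\nsH^{(j)}_3$-module by induction), and a short computation with \eqref{e sP^k definition1}---using that $\sQ^{(1)}_s$ acts by $0$ on $\epsilon_+$---gives $\epsilon_+\tsr\bar M^{(j)}\cong\bar X^{(j+1)}(\f\cdot(\two^j a^{(j)})^2)=\bar X^{(j+1)}((\two^{j+1}a^{(j)})^2)$, so $\epsilon_+^{\tsr k-j}\tsr\bar M^{(j)}\cong\bar X^{(k)}((\two^k a^{(j)})^2)$. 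To produce $\bar M^{(k)}$, apply Proposition \ref{p main tensor product computation} to $\bar M^{(k-1)}\tsr\bar M^{(1)}$ regarded as a $\field\nsH^{(k)}_3$-module via $\bar\Delta^{(k-1,1)}$: writing $b^{(m)}:=T^1_m(\tfrac1{\two})$ and using $T_m^2+(T^1_m)^2=1$ to rewrite $\f^m-(\two^m a^{(m)})^2=\two^{2m}(b^{(m)})^2$, one finds $a_\pm=\two^k\bigl(a^{(k-1)}a^{(1)}\pm b^{(k-1)}b^{(1)}\bigr)$, which by Lemma \ref{l Chebyshev quadratic identity} equals $\two^k a^{(k-2)}$ and $\two^k a^{(k)}$ respectively. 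One checks the relevant nondegeneracy ($a^{(k)}$, and for $k\ge 3$ also $a^{(k-2)}$, avoid the forbidden values because $T_m(\tfrac1{\two})$ is a nonconstant rational function of $\u$ when $m\ge 1$; the case $k=2$ lands in the second case of \eqref{e xl xr decomp}), so Proposition \ref{p main tensor product computation} exhibits $\bar M^{(k)}=\bar X^{(k)}((\two^k a^{(k)})^2)$ as a direct summand of $\bar M^{(k-1)}\tsr\bar M^{(1)}$, hence a valid and (by \eqref{e bar X irreducible}) irreducible $\field\nsH^{(k)}_3$-module. These $k+2$ modules are pairwise nonisomorphic: $\bar\epsilon_\pm$ are one-dimensional, and among the two-dimensional ones \eqref{e bar X distinct} reduces nonisomorphism to the distinctness of $T_1(\tfrac1{\two})^2,\dots,T_k(\tfrac1{\two})^2$, which holds since the $T_j$ have distinct degrees.

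It remains to show there are no further irreducibles; this is the crux, and it is handled by a dimension bound. Since $\tfrac1{\two^k}\sP^{(k)}_s=\Delta^{(k)}_s(p_+)$ is idempotent, $r_s:=\tfrac{2}{\two^k}\sP^{(k)}_s-1$ is an involution of $\nsH^{(k)}_3$, and $\nsH^{(k)}_3$ is generated by $1,r_1,r_2$; thus it is a quotient of $\field[D_\infty]=\field[t,t^{-1}]\rtimes\ZZ/2$ via $t\mapsto r_1r_2$, the $\ZZ/2$ acting by $t\mapsto t^{-1}$. Using the splitting $\H_3\cong\field^2\times M_2(\field)$ and the explicit coproduct \eqref{e explicit coproduct}, one computes the image of $\sP^{(k)}_s$ in each simple factor of $\H_3^{\tsr k}\cong\prod_{\mathbf b\in\{+,-,\square\}^k}M_{2^{d(\mathbf b)}}(\field)$: the even-cardinality constraint collapses the sum to $\tfrac{\two^k}{2}\bigl(1\pm(r^{(s)})^{\tsr d(\mathbf b)}\bigr)$, where $r^{(s)}:=\tfrac{2}{\two}\C_s-1$ is the reflection acting in the two-dimensional $\H_3$-irreducible. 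Hence $r_1r_2$ acts in $\H_3^{\tsr k}$ as $\bigl((r^{(s_1)}r^{(s_2)})^{\tsr d(\mathbf b)}\bigr)_{\mathbf b}$, which is diagonalizable with eigenvalue set exactly $\{\zeta^l:-k\le l\le k\}$, where $\zeta,\zeta^{-1}$ are the eigenvalues of $r^{(s_1)}r^{(s_2)}$ on the two-dimensional $\H_3$-module; these $2k+1$ values are distinct because $\zeta$ is not a root of unity ($\zeta+\zeta^{-1}$ is a nonconstant element of $\QQ(\u)$). By injectivity of $\nsH^{(k)}_3\hookrightarrow\H_3^{\tsr k}$, the element $r_1r_2$ satisfies the separable degree-$(2k+1)$ polynomial $\prod_{|l|\le k}(t-\zeta^l)$ already in $\nsH^{(k)}_3$, so $\nsH^{(k)}_3$ is a quotient of $\bigl(\field[t,t^{-1}]/(\textstyle\prod_{|l|\le k}(t-\zeta^l))\bigr)\rtimes\ZZ/2$, an algebra of dimension $2(2k+1)=4k+2$. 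Therefore $\dim_\field\nsH^{(k)}_3\le 4k+2$; but the $k+2$ distinct irreducibles above yield a surjection $\nsH^{(k)}_3\twoheadrightarrow\field^2\times M_2(\field)^k$, so $\dim_\field\nsH^{(k)}_3\ge 4k+2$. Equality forces this surjection to be an isomorphism, so $\field\nsH^{(k)}_3$ is (split) semisimple of dimension $4k+2$ with exactly the $k+2$ listed irreducibles, completing the induction.

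The main obstacle is the last paragraph, and within it the eigenvalue computation for $r_1r_2$: one must verify carefully, via \eqref{e explicit coproduct} and the structure of the two-dimensional $\H_3$-representation, that every $\zeta^l$ with $|l|\le k$ genuinely occurs (so the minimal polynomial has degree exactly $2k+1$) and that no eigenvalue of larger ``angle'' can appear. A technically lighter variant runs this step inductively along the smaller inclusion $\nsH^{(k)}_3\hookrightarrow\nsH^{(k-1)}_3\tsr\H_3$ of \eqref{e delta kl kr definition}, using that $r_s$ maps to $r^{(k-1)}_s\tsr r^{(1)}_s$ and invoking the inductive description of $\nsH^{(k-1)}_3$ in place of the direct analysis of $\H_3^{\tsr k}$.
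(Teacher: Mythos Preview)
Your proof is correct and takes a genuinely different route from the paper's for the completeness step.

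Both arguments share the same inductive skeleton: they exhibit $\bar M^{(k)}$ as a summand of $\bar M^{(1)}\tsr\bar M^{(k-1)}$ via Proposition~\ref{p main tensor product computation} together with the Chebyshev identity of Lemma~\ref{l Chebyshev quadratic identity}, and they verify distinctness of the $\bar X^{(k)}((\two^k a^{(j)})^2)$ by comparing degrees of the $T_j$. The divergence is in showing that the list is \emph{complete}. The paper argues that every irreducible of $\field\nsH^{(k)}_3$ occurs in some $X_l\tsr\bar X_r$ with $X_l$ an $\H_3$-irreducible and $\bar X_r$ an $\nsH^{(k-1)}_3$-irreducible, and then explicitly decomposes every such tensor product, handling $X_l=\epsilon_-$ via the duality machinery of Propositions~\ref{p dual basis to C' S = 1 theta op} and~\ref{p 1 op theta op representation facts}. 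You instead bound $\dim_\field\field\nsH^{(k)}_3\le 4k+2$ by recognising $\field\nsH^{(k)}_3$ as a quotient of $\field[D_\infty]$ and computing an annihilating polynomial of degree $2k+1$ for $r_1r_2$ inside $\H_3^{\tsr k}$; the matching lower bound from the $k+2$ exhibited irreducibles then squeezes the dimension and forces split semisimplicity. Your approach is more self-contained (it avoids the $\dualone/\dualtheta$ formalism entirely) and delivers Corollary~\ref{c nsh 3 semisimple} and the dimension count of Theorem~\ref{t S3 coprod k relations} in the same breath; the paper's approach, by contrast, is purely module-theoretic and extends more transparently to the general dihedral case of \textsection\ref{s Generalizations to dihedral groups}, where your eigenvalue analysis would have to be redone for each~$m$.

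One small remark on the ``main obstacle'' you flag: for the upper bound $\dim\le 4k+2$ you only need that $r_1r_2$ is annihilated by \emph{some} polynomial of degree $\le 2k+1$, which follows immediately once you know that in every simple factor of $\H_3^{\tsr k}$ the eigenvalues of $r_1r_2$ lie in $\{\zeta^l:|l|\le k\}$. You do not need the minimal polynomial to have degree \emph{exactly} $2k+1$, nor that every $\zeta^l$ actually occurs; the lower bound comes independently from the exhibited irreducibles. So the verification you worry about is lighter than you suggest.
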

\begin{proof}
The proof is by induction on $k$. It is well known that  $\epsilon_+ = \field M_{(3)}, \epsilon_- = \field M_{(1,1,1)},$ and $\field M_{(2,1)}$ are the irreducible representations of $\field \H_3 = \field \nsH^{(1)}_3$, where  $M_\lambda$ is the Specht module of shape  $\lambda$.  A basis of  $\field M_{(2,1)}$ is given by the Kazhdan-Lusztig basis:  the subquotient $\field \{\C_{s_1}, \C_{s_2s_1}, \C_{s_1s_2s_1}\}/\field \{\C_{s_1s_2s_1}\}$ of $\field \H_3$ (considered as a left $\field \H_3$-module) is equal to  $\field M_{(2,1)}$.  In the natural basis $(\C_{s_1}, \C_{s_2s_1})$ of this module, the
matrices for left multiplying by $\sP^{(1)}_i = \C_{s_i}$ are the same as those defining $\nsbr{M}^{(1)} = \nsbr{X}^{(1)}(1)$; the Kazhdan-Lusztig basis elements above are given explicitly by
$\C_{s_2s_1} = \C_{s_2}\C_{s_1},\ \C_{s_1s_2s_1}= \C_{s_1}\C_{s_2}\C_{s_1} - \C_{s_1}$.
This verifies the result for  $k=1$. Now assume $k >1$.

Since $\field \nsH^{(k)}_3$ is a subalgebra of $\field \H_3 \tsr \field \nsH^{(k-1)}_3$, every irreducible representation of $\field \nsH^{(k)}_3$ belongs to the composition series of $X_l \tsr \nsbr{X}_r $ for some irreducible $\field \H_3$-module $X_l$ and irreducible $\field \nsH^{(k-1)}_3$-module $\nsbr{X}_r$. From the case  $X_l = \epsilon_+$, we conclude by induction that $\nsbr{\epsilon}_+,  \nsbr{\epsilon}_-$, and $\epsilon_+^{\tsr j} \tsr \nsbr{M}^{(k-j)}$, $j \in [k-1]$ are distinct irreducibles of $\field \nsH^{(k)}_3$.  To obtain the complete list of irreducibles, by commutativity \eqref{e commutativity nsH k}, it remains to decompose $\nsbr{M}^{(1)} \tsr \nsbr{M}^{(k-1)}$ and $\nsbr{\epsilon}_- \tsr \nsbr{M}^{(k-1)}$ into irreducibles.

Decomposing $\nsbr{M}^{(1)} \tsr \nsbr{M}^{(k-1)}$ into irreducibles is the crux of the proof, and this is a special case of Proposition \ref{p main tensor product computation}.  Note that $\epsilon_+^{\tsr 2} \tsr \nsbr{M}^{(k-2)} = \nsbr{X}^{(k)}((\two^k a^{(k-2)})^2).$  Then by Proposition \ref{p main tensor product computation} and Lemma \ref{l Chebyshev quadratic identity} with  $k_l = 1, k_r = k-1$, $\nsbr{M}^{(1)} \tsr \nsbr{M}^{(k-1)}$ decomposes into irreducibles as
\begin{alignat}{5} \label{e m1 mk-1 tensor decomposition}
\nsbr{M}^{(1)} & \tsr \nsbr{M}^{(k-1)} & \ \cong \ & \left( \epsilon_+^{\tsr 2} \tsr \nsbr{M}^{(k-2)} \right) & \oplus \nsbr{M}^{(k)} & \text{\qquad if $k>2$}, \\
\label{e m1 m1 tensor decomposition}
\nsbr{M}^{(1)} & \tsr \nsbr{M}^{(1)} &\ \cong \ & \qquad \nsbr{\epsilon}_+ \oplus \nsbr{\epsilon}_-  & \oplus \nsbr{M}^{(2)} & \text{\qquad if $k = 2$}.
\end{alignat}


Finally, $\epsilon_- \tsr \nsbr{M}^{(k-1)} \cong \epsilon_-^\dualone \tsr (\nsbr{M}^{(k-1)})^\dualone \cong \epsilon_+ \tsr (\nsbr{M}^{(k-1)})^{\dualtheta}$, where the second isomorphism is by Proposition \ref{p 1 op theta op representation facts} and the first is by $\nsbr{M}^{(k-1)} \cong (\nsbr{M}^{(k-1)})^{\dualone} \cong (\nsbr{M}^{(k-1)})^\dualtheta$.  This last fact follows by induction using (\ref{e m1 mk-1 tensor decomposition}) and (\ref{e m1 m1 tensor decomposition}); the base case $\nsbr{M}^{(1)} \cong (\nsbr{M}^{(1)})^{\dualone} \cong (\nsbr{M}^{(1)})^\dualtheta$ is the $r=3$ case of Proposition \ref{p dual basis to C' S = 1 theta op}.  Thus the irreducible constituent of $\epsilon_- \tsr \nsbr{M}^{(k-1)} $ is already in our list.


It remains to check that $\nsbr{M}^{(k)}$ is distinct from $\epsilon_+^{\tsr j} \tsr \nsbr{M}^{(k-j)}$,  $j \in [k-1]$. This holds by \eqref{e bar X distinct} and the fact that $T_k(x)$ is a degree  $k$ polynomial in  $x$, implying  $[2]^k a^{(j)}$ is a polynomial in the  $\u$-integer $[2]$ whose constant coefficient is nonzero if and only if $j =k$.
\end{proof}

\begin{corollary}\label{c nsh 3 semisimple}
The algebra $\field \nsH^{(k)}_3$ is split semisimple for $\field = \QQ(u)$.
\end{corollary}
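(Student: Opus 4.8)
The plan is to induct on $k$ and, at each step, produce a faithful \emph{semisimple} module for $\field\nsH^{(k)}_3$; since a finite-dimensional algebra possessing such a module has zero Jacobson radical, this proves semisimplicity, and absolute irreducibility of the simples (which we get from Theorem~\ref{t nsH S3 2D reps}) then upgrades this to split semisimplicity.

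First I would dispose of the absolute-irreducibility bookkeeping. By Theorem~\ref{t nsH S3 2D reps}, over $\field=\QQ(\u)$ the simple $\field\nsH^{(k)}_3$-modules are $\bar\epsilon_+,\bar\epsilon_-$ and $k$ two-dimensional modules, each isomorphic to some $\bar X^{(k)}(c)$ with $c=(\two^k a^{(j)})^2$. The one-dimensional ones are absolutely irreducible trivially; the two-dimensional ones have $c\notin\{0,\f^k\}$ (this is implicit in their being used as honest $2$-dimensional simples in Theorem~\ref{t nsH S3 2D reps}, and can also be read off from $a^{(j)}=T_j(\tfrac1\two)$), so \eqref{e bar X irreducible} makes them absolutely irreducible. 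Hence $\End_{\field\nsH^{(k)}_3}(S)=\field$ for every simple $S$, and once $\field\nsH^{(k)}_3$ is known to be semisimple, the Artin--Wedderburn decomposition forces it to be a product of matrix algebras over $\field$, i.e.\ split semisimple. So everything reduces to proving semisimplicity.

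For the induction, the case $k=1$ is the classical split semisimplicity of $\field\H_3$. For $k>1$, assuming $\field\nsH^{(k-1)}_3$ is split semisimple, the algebra $A':=\field\H_3\tsr\field\nsH^{(k-1)}_3$ is split semisimple, and $\bar\Delta^{(1,k-1)}$ of \eqref{e delta kl kr definition} realizes $\field\nsH^{(k)}_3$ as a subalgebra of $A'$. I would then restrict the left regular module ${}_{A'}A'$ along this inclusion. Since $A'$ is semisimple, ${}_{A'}A'$ is semisimple, and as an $A'$-module it is a direct sum of copies of the modules $X_l\bt\bar X_r$ with $X_l$ ranging over the simples of $\field\H_3$ and $\bar X_r$ over those of $\field\nsH^{(k-1)}_3$. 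Therefore, restricted to $\field\nsH^{(k)}_3$, it is a direct sum of the tensor-product modules $X_l\tsr\bar X_r$ appearing in Proposition~\ref{p main tensor product computation}; and it is manifestly faithful over $\field\nsH^{(k)}_3$ (already over $A'$). The induction then closes as soon as each $X_l\tsr\bar X_r$ is a semisimple $\field\nsH^{(k)}_3$-module.

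That last claim is where the real work is, and it is what prevents the argument from collapsing to the (false) slogan ``a subalgebra of a semisimple algebra is semisimple'': one genuinely must exploit the explicit decompositions. When both $X_l$ and $\bar X_r$ are two-dimensional, $X_l\tsr\bar X_r$ splits as a sum of simples by Proposition~\ref{p main tensor product computation}, whose hypotheses are satisfied over $\QQ(\u)$ --- it is an integral domain of characteristic $0$, and the square roots entering \eqref{e decompose xl xr constant} lie in $\QQ(\u)$ by Lemma~\ref{l Chebyshev quadratic identity} together with the integrality $T^1_{j_l}T^1_{j_r}\in\ZZ[x]$. When $X_l$ is one-dimensional, formula \eqref{e sP^k definition1} shows $\sP^{(k)}_s$ acts on $\epsilon_+\tsr\bar X_r$ (resp.\ $\epsilon_-\tsr\bar X_r$) as $\two\,\sP^{(k-1)}_s$ (resp.\ $\two\,\sQ^{(k-1)}_s$) acts on $\bar X_r$, and a direct $2\times2$ check --- essentially the recalibrations and the use of the duality $M\mapsto M^{\dualtheta}$ in the proof of Theorem~\ref{t nsH S3 2D reps} --- shows this module is again simple; the case $X_l$ two-dimensional, $\bar X_r$ one-dimensional follows by commutativity \eqref{e commutativity nsH k}, and two one-dimensional factors is trivial. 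Thus every $X_l\tsr\bar X_r$ is semisimple, ${}_{A'}A'$ restricts to a faithful semisimple $\field\nsH^{(k)}_3$-module, and the induction is complete; with the second paragraph, $\field\nsH^{(k)}_3$ is split semisimple. (As a consistency check one may note that Jacobson density gives a surjection $\field\nsH^{(k)}_3\twoheadrightarrow\field^2\times M_2(\field)^k$, so split semisimplicity is equivalent to $\dim_\field\field\nsH^{(k)}_3=4k+2$, which is also what the presentation in Theorem~\ref{t S3 coprod k relations} yields.)
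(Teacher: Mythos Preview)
Your argument is correct and follows essentially the same route as the paper: induct on $k$, use the inclusion $\bar\Delta^{(1,k-1)}$ into $\field\H_3\tsr\field\nsH^{(k-1)}_3$, verify that each $X_l\tsr\bar X_r$ decomposes into irreducibles (via Proposition~\ref{p main tensor product computation} and the one-dimensional cases), and conclude splitness from absolute irreducibility via~\eqref{e bar X irreducible}. The only point the paper makes explicit that you take for granted is that the inclusion $\nsH^{(k)}_3\hookrightarrow\H_3\tsr\nsH^{(k-1)}_3$ remains injective after applying $\field\tsr_{\mathbf{A}}-$; the paper invokes flatness of the localization $\mathbf{A}\to\QQ(\u)$ for this, which is what makes your ``manifestly faithful'' step legitimate.
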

\begin{proof}
Proceed by induction on $k$.  We can view the inclusion $\nsbr{\Delta}^{(1,k-1)}: \nsH^{(k)}_3 \to \H_3 \tsr \nsH^{(k- 1)}_3$ of \eqref{e delta kl kr definition} as one of left $\nsH^{(k)}_3$-modules.  Since localizations are flat, $\field$ is a flat $\mathbf{A}$-module; thus, $\field \nsH^{(k)}_3 \to \field \H_3 \tsr \field \nsH^{(k- 1)}_3$ is also an inclusion (this fails for the specialization  $\mathbf{A} \to \ZZ$,  $\u \mapsto 1$). Hence to show that  $\field \nsH^{(k)}_3$ is semisimple it suffices to show that $X_l \tsr \nsbr{X}_r$ is a direct sum of irreducible $\field \nsH^{(k)}_3$-modules for any irreducible $\field \H_3$-module $X_l$ and irreducible $\field \nsH^{(k-1)}_3$-module $\nsbr{X}_r$.  The proof above gives an explicit decomposition of $X_l \tsr \nsbr{X}_r$ into irreducibles.

The algebra $\field \nsH^{(k)}_3$ is split because the irreducibles $\epsilon^{\tsr k-j} \tsr \nsbr{M}^{(j)} \cong \nsbr{X}^{(k)}(([2]^k a^{(j)})^2)$,  $j \in [k]$ are absolutely irreducible by \eqref{e bar X irreducible}.
\end{proof}

\begin{remark}
\label{r semisimple}
It was claimed in \cite{GCT4} that $\RR \nsH_r$ is semisimple for any specialization  $\mathbf{A} \to \RR$, however there is a mistake in the proof. In fact, the specialization $\u \mapsto 1$ is not semisimple if $r > 2$.
The proof was later repaired in  \cite[Proposition 11.8]{BMSGCT4} to show that  $\QQ(\u) \nsH_r$ is semisimple.  Since  $\QQ(u)$ is a perfect field, it follows that $\QQ(u) \nsH_r$ is a separable algebra, which means that  $\field \nsH_r$ is semisimple for any field extension  $\field \supseteq \QQ(u)$.  So, for instance,  $\RR(u) \nsH_r$ is also semisimple.
We strongly suspect that all the algebras $\QQ(u) \nsH^{(k)}_W$ are semisimple for  $W$ any finite Weyl group, but we do not yet have a proof.
\end{remark}

\subsection{}
\label{ss nonstandard braid relation H3}
Now we can determine the nonstandard braid relation for $ \nsH^{(k)}_3$.
Define
\[ F_k(y) = (y - (\two^{k} a^{(1)})^2) (y - (\two^{k} a^{(2)})^2) \dots (y - (\two^k a^{(k)})^2), \]
a polynomial in an indeterminate $y$ with coefficients in $\mathbf{A}$.


\begin{theorem}
\label{t S3 coprod k relations}
The algebra $\nsH^{(k)}_3$ is the associative $\mathbf{A}$-algebra generated by $\sP^{(k)}_s, s \in S=\{s_1,s_2\},$ with quadratic relations
\be
\label{e nsH k quadratic relation}
(\sP^{(k)}_s)^2 = [2]^k \sP^{(k)}_s, \ \ \ s \in S,
\ee
and nonstandard braid relation
\be
\label{e nsH k braid relation}
\sP^{(k)}_1 F_k(\sP^{(k)}_{21}) = \sP^{(k)}_2 F_k(\sP^{(k)}_{12}).
\ee
\end{theorem}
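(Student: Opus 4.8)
The plan is to identify $\nsH^{(k)}_3$ with the abstract associative $\mathbf{A}$-algebra $A$ presented by generators $\sP_1,\sP_2$ and the relations \eqref{e nsH k quadratic relation} and \eqref{e nsH k braid relation}. First I would check that $\sP^{(k)}_1,\sP^{(k)}_2$ satisfy these relations, producing a surjective $\mathbf{A}$-algebra homomorphism $\phi\colon A\twoheadrightarrow\nsH^{(k)}_3$, $\sP_s\mapsto\sP^{(k)}_s$ (surjective because $\nsH^{(k)}_3$ is generated by the $\sP^{(k)}_s$). The quadratic relation is immediate from $\sP^{(k)}_s=\two^k\Delta_s^{(k)}(p_+)$ together with the fact that $\Delta_s^{(k)}(p_+)$ is idempotent, which is visible from \eqref{e explicit coproduct} (it is a sum of pairwise orthogonal idempotents). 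For the nonstandard braid relation, note that $\nsH^{(k)}_3\subseteq\H_3^{\tsr k}$ is $\mathbf{A}$-torsion-free and hence embeds in $\QQ(u)\nsH^{(k)}_3$, which is split semisimple by Corollary \ref{c nsh 3 semisimple}; it therefore suffices to verify the relation on each irreducible of Theorem \ref{t nsH S3 2D reps}. It holds trivially on $\bar{\epsilon}_+$ and $\bar{\epsilon}_-$, where $\sP^{(k)}_1$ and $\sP^{(k)}_2$ act by equal scalars. On a two-dimensional irreducible, which is isomorphic to some $\bar{X}^{(k)}(c)$ with $c=(\two^k a^{(j)})^2$, a short matrix computation with \eqref{e bar X action} gives $\sP^{(k)}_{121}=c\,\sP^{(k)}_1$ and $\sP^{(k)}_{212}=c\,\sP^{(k)}_2$, whence, by induction on degree, $\sP^{(k)}_1 G(\sP^{(k)}_{21})=G(c)\,\sP^{(k)}_1$ and $\sP^{(k)}_2 G(\sP^{(k)}_{12})=G(c)\,\sP^{(k)}_2$ for every polynomial $G$; both sides of \eqref{e nsH k braid relation} thus vanish because $c$ is a root of $F_k$.

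The core of the argument is to bound the rank of $A$ by exhibiting an $\mathbf{A}$-spanning set of size $4k+2$. Using \eqref{e nsH k quadratic relation} to collapse repeated letters, $A$ is spanned over $\mathbf{A}$ by $1$ together with the alternating words $W^{(1)}_\ell$ and $W^{(2)}_\ell$, $\ell\ge 1$, where $W^{(1)}_\ell$ is the product $\sP^{(k)}_1\sP^{(k)}_2\sP^{(k)}_1\cdots$ of $\ell$ alternating factors and $W^{(2)}_\ell$ is the analogous product starting with $\sP^{(k)}_2$. Let $V$ be the $\mathbf{A}$-span of the $4k+2$ elements $\{1\}\cup\{W^{(1)}_\ell:1\le\ell\le 2k\}\cup\{W^{(2)}_\ell:1\le\ell\le 2k+1\}$; the claim is that $V=A$. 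Writing $F_k(y)=\sum_{i=0}^k f_i y^i$ with $f_k=1$ and expanding \eqref{e nsH k braid relation} in alternating words, the relation reads $\sum_{i=0}^k f_i W^{(1)}_{2i+1}=\sum_{i=0}^k f_i W^{(2)}_{2i+1}$; since its only terms of length exceeding $2k-1$ are $W^{(1)}_{2k+1}$ and $W^{(2)}_{2k+1}$, we get $W^{(1)}_{2k+1}\in V$, and $W^{(2)}_{2k+1}$ equals $W^{(1)}_{2k+1}$ plus an $\mathbf{A}$-combination of alternating words of length $\le 2k-1$. I would then check $\sP^{(k)}_1 V\subseteq V$ and $\sP^{(k)}_2 V\subseteq V$ generator by generator: left multiplication by $\sP^{(k)}_1$ sends $1\mapsto W^{(1)}_1$, sends $W^{(1)}_\ell\mapsto\two^k W^{(1)}_\ell$ by \eqref{e nsH k quadratic relation}, and sends $W^{(2)}_\ell\mapsto W^{(1)}_{\ell+1}$, which lies in $V$ outright when $\ell\le 2k-1$, equals $W^{(1)}_{2k+1}\in V$ when $\ell=2k$, and for $\ell=2k+1$ equals $\sP^{(k)}_1\bigl(W^{(1)}_{2k+1}+(\text{words of length}\le 2k-1)\bigr)$, which reduces to the preceding cases; the verification for $\sP^{(k)}_2$ is analogous and requires no special case, since the generators of the form $W^{(1)}_\ell$ all have $\ell\le 2k$. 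As $1\in V$, this forces $V=A$.

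Finally I would combine the two parts. By Theorem \ref{t nsH S3 2D reps} and Corollary \ref{c nsh 3 semisimple}, $\QQ(u)\nsH^{(k)}_3$ is split semisimple with two one-dimensional and $k$ two-dimensional irreducibles, so $\dim_{\QQ(u)}\QQ(u)\nsH^{(k)}_3=4k+2$. The $4k+2$ elements $b_1,\dots,b_{4k+2}$ produced above span $\QQ(u)\otimes_{\mathbf{A}}A$, which $\phi$ carries onto the $(4k+2)$-dimensional algebra $\QQ(u)\nsH^{(k)}_3$; hence $\phi(b_1),\dots,\phi(b_{4k+2})$ is a $\QQ(u)$-basis of $\QQ(u)\nsH^{(k)}_3$, and in particular is $\mathbf{A}$-linearly independent inside $\nsH^{(k)}_3$. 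Now if $x\in\ker\phi$, write $x=\sum_i a_i b_i$ with $a_i\in\mathbf{A}$ (possible since the $b_i$ span $A$); then $\sum_i a_i\phi(b_i)=0$ forces every $a_i=0$, so $x=0$, and $\phi$ is an isomorphism (with $\{b_i\}$ an $\mathbf{A}$-basis of both algebras). I expect the main difficulty to be the second paragraph --- the word-length bookkeeping and the verification that $V$ is a left ideal --- rather than anything conceptually deep: the first paragraph is a mechanical matrix computation once Theorem \ref{t nsH S3 2D reps} is in hand, and the dimension count in the last paragraph is purely formal.
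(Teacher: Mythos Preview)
Your proof is correct and follows essentially the same approach as the paper: verify the relations on each irreducible using Theorem~\ref{t nsH S3 2D reps} and semisimplicity, then compare the abstract algebra $A$ to $\nsH^{(k)}_3$ via a dimension count over $\QQ(u)$. The only difference is one of detail: the paper simply asserts that the $4k+2$ alternating monomials form an $\mathbf{A}$-basis of $A$, whereas you carefully prove the spanning statement by checking that $V$ is a left ideal and then recover linear independence from the image in $\QQ(u)\nsH^{(k)}_3$ --- this is a cleaner way to avoid assuming freeness of $A$ up front.
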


\begin{proof}
The quadratic relations follow from the fact that $\H_2$ is a Hopf algebra.

Set
\[h:=\sP^{(k)}_1 F_k(\sP^{(k)}_{21}) - \sP^{(k)}_2 F_k(\sP^{(k)}_{12}) \in \nsH^{(k)}_3.\]
We next show that the nonstandard braid relation holds in $\field \nsH^{(k)}_3$, for $\field = \QQ(\u),$ i.e. $1 \tsr h = 0$ in $\field \nsH^{(k)}_3$.  To see this, one computes easily using \eqref{e bar X action} that $\sP^{(k)}_2 (\sP^{(k)}_{12} - (\two^k a^{(j)})^2)$ and $\sP^{(k)}_1 (\sP^{(k)}_{21} - (\two^k a^{(j)})^2)$ act on $\epsilon_+^{\tsr k-j} \tsr \nsbr{M}^{(j)}$ by $0$.
We also have that $\sP^{(k)}_i$ acts by $0$ on $\nsbr{\epsilon}_-$.  Further noting that both sides of (\ref{e nsH k braid relation}) act on $\nsbr{\epsilon}_+$ by the constant $[2]^k F_k([2]^{2k})$, we conclude using Theorem \ref{t nsH S3 2D reps} that $1 \tsr h$ acts by $0$ on all irreducible representations of $\field \nsH^{(k)}_3.$  The semisimplicity of $\field \nsH^{(k)}_3$ then implies $1 \tsr h=0$.

We next claim that $\nsH^{(k)}_3 \to \field \nsH^{(k)}_3$ is injective, which would imply  $h=0$, i.e. the nonstandard braid relation holds.  The claim holds because  $\field \nsH^{(k)}_3$ is the localization of $ \nsH^{(k)}_3$ at the multiplicative set  $U = \mathbf{A} \backslash \{0\}$ and  $U$ contains no zero divisors (the facts about localization needed for this would be standard in the commutative setting, and it is not difficult to show that they carry over to the case that the multiplicative set lies in the center of the ring).  The multiplicative set $U$ contains no zero divisors because $\nsH^{(k)}_3$ is a subalgebra of the free $\mathbf{A}$-module  $\H_W^{\tsr k}$.

To see that no other relations hold in $ \nsH^{(k)}_3$, let $H$ be the $\mathbf{A}$-algebra generated by the $\sP^{(k)}_i$ with relations \eqref{e nsH k quadratic relation} and \eqref{e nsH k braid relation}.  The algebra $H$ is a free  $\mathbf{A}$-module with basis given by the  $4k+2$ monomials
 \[ 1, \ (\sP^{(k)}_{12})^j,\ (\sP^{(k)}_{21})^j, \ \sP^{(k)}_1 (\sP^{(k)}_{21})^{j-1}, \ \sP^{(k)}_2 (\sP^{(k)}_{12})^{j-1}, \sP^{(k)}_1 (\sP^{(k)}_{21})^k, \quad j \in [k]. \]
By the split semisimplicity of $\QQ(u)\nsH^{(k)}_3$, $\dim_{\QQ(u)} \QQ(u) \nsH^{(k)}_3 = 4k+2=  \dim_{\QQ(u)} \QQ(u)H$. An additional relation holding in $\nsH^{(k)}_3$ and not in $H$ would force $\dim_{\QQ(u)} \QQ(u) \nsH^{(k)}_3 <  \dim_{\QQ(u)} \QQ(u)H$, hence $H \cong \nsH^{(k)}_3$.
\end{proof}

In the proof above we have shown that
\begin{corollary}
The algebra  $\nsH^{(k)}_3$ is free as an  $\mathbf{A}$-module.
\end{corollary}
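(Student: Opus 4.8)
The plan is to read this off from the proof of Theorem~\ref{t S3 coprod k relations}. There we produced an isomorphism $\nsH^{(k)}_3 \cong H$, where $H$ is the abstract $\mathbf{A}$-algebra presented by the quadratic relations~\eqref{e nsH k quadratic relation} and the nonstandard braid relation~\eqref{e nsH k braid relation}, and we observed that the $4k+2$ monomials
\[
1,\ (\sP^{(k)}_{12})^j,\ (\sP^{(k)}_{21})^j,\ \sP^{(k)}_1(\sP^{(k)}_{21})^{j-1},\ \sP^{(k)}_2(\sP^{(k)}_{12})^{j-1},\ \sP^{(k)}_1(\sP^{(k)}_{21})^{k},\quad j\in[k],
\]
span $H$, hence $\nsH^{(k)}_3$, over $\mathbf{A}$. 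The only point left to make explicit is that these monomials are $\mathbf{A}$-linearly independent.

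For the spanning claim I would argue by a rewriting inside $\nsH^{(k)}_3$ (valid since the relations of $H$ hold there): the quadratic relations reduce every word in $\sP^{(k)}_1,\sP^{(k)}_2$, up to a scalar, to $1$ or to an alternating word; alternating words of length at most $2k$ together with $\sP^{(k)}_1(\sP^{(k)}_{21})^k$ are already in the list, the length-$(2k{+}1)$ word $\sP^{(k)}_2(\sP^{(k)}_{12})^k$ is rewritten in terms of listed monomials using \eqref{e nsH k braid relation} (in the form $\sP^{(k)}_2 F_k(\sP^{(k)}_{12}) = \sP^{(k)}_1 F_k(\sP^{(k)}_{21})$, noting that $F_k$ is monic of degree $k$ so its leading term is a genuine alternating word), and longer alternating words are reduced by left-multiplying \eqref{e nsH k braid relation} by $\sP^{(k)}_1$ or $\sP^{(k)}_2$, applying the quadratic relations, and inducting on length. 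For linear independence I would base-change to $\field = \QQ(\u)$: by Theorem~\ref{t nsH S3 2D reps} and Corollary~\ref{c nsh 3 semisimple}, $\field\nsH^{(k)}_3$ is split semisimple of dimension $1^2 + 1^2 + k\cdot 2^2 = 4k+2$. Thus the surjection $\mathbf{A}^{4k+2}\to\nsH^{(k)}_3$ sending the standard basis to the $4k+2$ monomials becomes, after $\otimes_{\mathbf{A}}\field$, a surjection $\field^{4k+2}\to\field\nsH^{(k)}_3$ of $\field$-vector spaces of equal dimension, hence an isomorphism. Its kernel is therefore $\mathbf{A}$-torsion, but it sits inside the torsion-free module $\mathbf{A}^{4k+2}$, so it is zero, and $\nsH^{(k)}_3 \cong \mathbf{A}^{4k+2}$ is $\mathbf{A}$-free.

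Since the corollary is essentially harvested from the proof of Theorem~\ref{t S3 coprod k relations}, there is no real obstacle; the one piece of bona fide work is the spanning rewriting, which must be organized so that it terminates and treats alternating words beginning with $\sP^{(k)}_1$ and with $\sP^{(k)}_2$ symmetrically. I would also remark that one cannot shortcut via ``a submodule of a free module is free'': the ground ring $\mathbf{A}=\ZZ[\u,\ui]$ is not a principal ideal domain, so although $\nsH^{(k)}_3$ is an $\mathbf{A}$-submodule of the free module $\H_3^{\tsr k}$, that alone does not yield freeness — the explicit monomial basis is what does.
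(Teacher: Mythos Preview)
Your argument is correct and is exactly the paper's approach: the corollary is harvested from the proof of Theorem~\ref{t S3 coprod k relations}, where the $4k+2$ monomials are shown to span (via the rewriting you describe) and their linear independence follows from the dimension count $\dim_{\QQ(\u)}\QQ(\u)\nsH^{(k)}_3 = 4k+2$ together with torsion-freeness over the domain $\mathbf{A}$. Your added remark that freeness does \emph{not} follow merely from $\nsH^{(k)}_3\subseteq\H_3^{\tsr k}$ (since $\mathbf{A}$ is not a PID) is a useful clarification not made explicit in the paper.
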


\section{Generalizations to dihedral groups}
\label{s Generalizations to dihedral groups}

Let $W$ be the dihedral group of order $2m$ with simple reflections $S= \{ s_1, s_2 \}$ and relations
\be
\begin{array}{cc}
s^2=\idelm, & s \in S \\
(s_1 s_2)^m = \idelm. &
\end{array}
\ee
We will generalize the results of the previous section to the algebras $\nsH^{(k)}_W$.  The two-dimensional irreducibles of $ \nsH^{(k)}_W$ are indexed by signed compositions of  $k$ (Definition \ref{d signed compositions})
and their defining constants are given by the evaluation of a multivariate generalization of Chebyshev polynomials.  These multivariate
Chebyshev polynomials seem to be new. For instance, they do not agree with those studied in \cite{Beerends}.

\subsection{}
We first recall the description of the irreducible representations of $\H_W$.

\begin{theorem}[Kilmoyer-Solomon (see, for instance, {\cite[Theorem 8.3.1]{GeckP}})]
\label{t dihedral irreducibles}
Let $\zeta$ be a primitive $m$-th root of unity and define $w_j=2+\zeta^j+\zeta^{-j}$. Set $n=\frac{m-2}{2}$ if $m$ is even and $n=\frac{m-1}{2}$ if $m$ is odd.  Suppose $\field$ is a subfield of $\CC(u)$ containing the $w_j$.  The irreducible representations of $\field \H_W$ consist of
\begin{list}{\emph{(\alph{ctr})}} {\usecounter{ctr} \setlength{\itemsep}{1pt} \setlength{\topsep}{2pt}}
\item the trivial and sign representations $\epsilon_+$, $\epsilon_-$,
\item $n$ two-dimensional representations denoted $M_j, j \in [n]$, where $M_j=\nsbr{X}^{(1)}(w_j)$,
\item if $m$ is even, two one-dimensional representations $\epsilon_1$ and $\epsilon_2$ determined by
\begin{alignat*}{2}
\epsilon_1:\quad  \C_1 &\mapsto \two, & \qquad \C_2 &\mapsto 0, \\
\epsilon_2:\quad  \C_1 &\mapsto 0, & \qquad \C_2 &\mapsto \two.
\end{alignat*}
\end{list}
\end{theorem}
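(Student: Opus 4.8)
The plan is to classify the simple $\field\H_W$-modules directly from the presentation of $\H_W$ by generators $\C_{s_1},\C_{s_2}$ --- whose relations are the quadratic ones, which can be written $(\C_s)^2=[2]\C_s$, together with the dihedral braid relation $\underbrace{T_{s_1}T_{s_2}\cdots}_{m}=\underbrace{T_{s_2}T_{s_1}\cdots}_{m}$ --- and then to recover split semisimplicity and completeness of the list at the end by checking that the sum of the squares of the dimensions of the modules produced equals $\dim_{\field}\field\H_W=|W|=2m$.

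First I would dispose of the one-dimensional modules. Since $(\C_s)^2=[2]\C_s$ with $[2]\neq 0$ in $\field$, every homomorphism $\field\H_W\to\field$ must send each $\C_{s_i}$ to $0$ or to $[2]$. Sending both to $[2]$ gives $\epsilon_+$ and sending both to $0$ gives $\epsilon_-$, and these obviously respect the braid relation; for the two mixed assignments one substitutes the corresponding scalar values of $T_{s_1},T_{s_2}$ into the braid relation and checks that it holds exactly when $m$ is even, producing $\epsilon_1,\epsilon_2$ in that case and nothing new otherwise.

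Next, given a two-dimensional simple module $M$: the element $\C_{s_1}$ acts semisimply with eigenvalues in $\{0,[2]\}$ and cannot be a scalar, since otherwise every $\C_{s_2}$-eigenspace would be a submodule; so $\C_{s_1}$ has the two distinct eigenvalues $[2],0$ with one-dimensional eigenspaces, and likewise for $\C_{s_2}$. Taking $x_1$ to span $\ker(\C_{s_1}-[2])$ and $x_2$ to span $\ker(\C_{s_2}-[2])$ gives a basis of $M$ (they are independent, for otherwise $\field x_1$ would be a submodule) in which, after rescaling $x_1$, the action is given by exactly the matrices of \eqref{e bar X action} with $k=1$; thus $M\cong\bar{X}^{(1)}(c)$ for a unique $c$ by \eqref{e bar X distinct}. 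It then remains to decide for which $c$ these matrices respect the braid relation, and which of the resulting $\bar{X}^{(1)}(c)$ are simple. Writing $T_{s_i}=\C_{s_i}-a$ with $a$ the relevant scalar, a direct matrix computation in $\bar{X}^{(1)}(c)$ shows that $T_{s_1}T_{s_2}$ has determinant $1$ and trace $c-2$, that $T_{s_1}T_{s_2}-T_{s_2}T_{s_1}=[\C_{s_1},\C_{s_2}]$, and --- the key identity --- that $T_{s_1}T_{s_2}T_{s_1}-T_{s_2}T_{s_1}T_{s_2}=(c-1)(T_{s_1}-T_{s_2})$, where $T_{s_1}-T_{s_2}=\C_{s_1}-\C_{s_2}\neq 0$. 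Using the Cayley--Hamilton formula $N^n=U_{n-1}(t)N-U_{n-2}(t)I$ for a $2\times2$ matrix $N$ with $\det N=1$ and $\trace N=2t$ (here $t=\tfrac{c-2}{2}$) to expand the braid word, the braid relation collapses to a single scalar equation in $c$: $U_{m/2-1}(t)=0$ when $m$ is even, and $(c-1)U_{(m-3)/2}(t)=U_{(m-5)/2}(t)$ when $m$ is odd, which a short trigonometric manipulation ($t=\cos\phi$) rewrites as $\sin(m\phi/2)\cos(\phi/2)=0$. In both parities the values of $c$ giving genuine representations turn out to be exactly $c=2+\zeta^j+\zeta^{-j}=w_j$ for $j\in[n]$; the potential extra solutions $c=4$ (and $c=0$ when $m$ is odd) are ruled out by a direct check that the braid relation fails there. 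Finally each $w_j$, $j\in[n]$, is a nonzero constant and so lies in neither $\{0,\f\}$, whence $\bar{X}^{(1)}(w_j)=M_j$ is absolutely irreducible by \eqref{e bar X irreducible} and the $w_j$ are pairwise distinct; the scalar equations above being equivalences, each $M_j$ indeed respects the braid relation.

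To conclude, I would count: the modules produced are $\epsilon_+,\epsilon_-$, the $\epsilon_1,\epsilon_2$ when $m$ is even, and the $n$ two-dimensional $M_j$, and the sum of the squares of their dimensions is $2+4n=2m$ for $m$ odd and $4+4n=2m$ for $m$ even; hence $\field\H_W$ is split semisimple and the list above is exhaustive. The step I expect to be the main obstacle is the computation inside $\bar{X}^{(1)}(c)$: establishing the matrix identities --- above all the collapse $T_{s_1}T_{s_2}T_{s_1}-T_{s_2}T_{s_1}T_{s_2}=(c-1)(T_{s_1}-T_{s_2})$, which is what makes the $m$ odd case tractable --- and then carrying out the trigonometric reduction that pins the roots to the $w_j$ and isolates the spurious values $c\in\{0,4\}$.
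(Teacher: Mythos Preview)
The paper does not give its own proof of this theorem: it is stated as the classical Kilmoyer--Solomon result with a citation to Geck--Pfeiffer, and is used as a black box to launch the induction in Theorem~\ref{t nsH W 2d reps}. So there is no argument in the paper to compare your proposal against.

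That said, your self-contained proof is correct. A few comments on the details. Your reduction of every two-dimensional simple to some $\bar{X}^{(1)}(c)$ is clean; the step ``after rescaling $x_1$'' implicitly uses that the off-diagonal entry of $\C_{s_2}$ is nonzero, which follows from simplicity (otherwise $\field x_1$ is a submodule). Your key collapse $T_1T_2T_1-T_2T_1T_2=(c-1)(T_1-T_2)$ does hold in $\bar{X}^{(1)}(c)$, and together with $A^j-B^j=U_{j-1}(t)(A-B)$ for $A=T_1T_2$, $B=T_2T_1$ (same characteristic polynomial) it gives exactly the scalar conditions you wrote. For $m$ odd the trigonometric identity becomes $\sin(k\phi)+\sin((k+1)\phi)=0$ with $2k+1=m$, i.e.\ $2\sin(m\phi/2)\cos(\phi/2)=0$; since $\cos(\phi/2)\neq 0$ on $(0,\pi)$, only the first factor contributes genuine roots, and the boundary values $c\in\{0,4\}$ are most cleanly disposed of by noting you have already exhibited $n$ roots of a degree-$n$ polynomial. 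The dimension count at the end is a legitimate way to conclude split semisimplicity: the map $\field\H_W\to\prod_i\End_\field(V_i)$ is surjective by density for pairwise non-isomorphic absolutely irreducible $V_i$, hence an isomorphism once $\sum(\dim V_i)^2=2m$.
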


Throughout this section, $W$ denotes the dihedral group of order  $2m$ and we maintain the notation  $n, w_j, M_j$, etc. of Theorem \ref{t dihedral irreducibles}.

\subsection{}
Here we introduce the combinatorics and multivariate generalization of Chebyshev polynomials needed to describe the irreducibles of $ \nsH^{(k)}_W$.

\begin{definition}
\label{d signed compositions}
A \emph{signed  $n$-composition of $k$} is an  $n$-vector  $\mathbf{k} = (k_1,\ldots,k_n)$ of integers such that $\sum_{i =1}^n |k_i| = k$.  If $\mathbf{k}$ is a signed composition of  $k$, we also write $|\mathbf{k}| = k$.  Two signed $n$-compositions $\mathbf{k, k'}$ are \emph{equivalent} if $\mathbf{k=k'}$ or $\mathbf{k=-k'}$.

Let $\compositions{n}{k}$ be the set of equivalence classes of signed $n$-compositions of $k$. Write  $\compositions{n}{\leq k} = \bigcup_{1 \leq k' \leq k} \compositions{n}{k'}$ and $\compositions{n}{< k} = \bigcup_{1 \leq k' < k} \compositions{n}{k'}$.
\end{definition}

\begin{example}
The signed  $2$-compositions of $3$, with equivalent compositions in the same column, are
\[
\begin{array}{cccccc}
(3,0) &(2, 1) & ( 2, \ng 1) & (1, 2) & (1, \ng 2) & (0, 3)  \\
(\ng 3,0) &(\ng 2, \ng 1) & ( \ng 2, 1) & (\ng 1, \ng 2) & (\ng 1, 2) & (0, \ng 3).
\end{array}
\]
Since the number of $i$-compositions of $k$ is $\binom{k-1}{i-1}$, we have the enumerative result
\[ | \compositions{n}{k}|=\sum_{i=1}^{n} 2^{i-1} \binom{n}{i} \binom{k-1}{i-1}. \]
\end{example}

\begin{definition}
Let $\mathbf{k}$ be a signed $n$-composition of  $k$.  The \emph{multivariate Chebyshev polynomial} $T_{\mathbf{k}}(x_1,\dots,x_n, y_1,\ldots,y_n) \in R$ is the polynomial expressing \newline
$\cos(k_1 \theta_1 + k_2 \theta_2 + \dots + k_n \theta_n)$ in terms of $x_j = \cos(\theta_j), y_j = \sin(\theta_j)$, where
\[R = \ZZ[x_1,\ldots,x_n,y_1,\ldots,y_n]/\bigoplus_j (x_j^2+y_j^2 - 1). \]

Similarly, $T^1_{\mathbf{k}}(x_1,\dots,x_n, y_1,\ldots,y_n) \in R$ is the polynomial expressing \newline $\sin(k_1 \theta_1 + k_2 \theta_2 + \dots + k_n \theta_n)$ in terms of $x_j = \cos(\theta_j), y_j = \sin(\theta_j)$.
\end{definition}

The polynomials  $T_\mathbf{k}, T^1_\mathbf{k}$ can be expressed in terms of the univariate polynomials $T^0_k(x) := T_k(x, y)$ and $T^1_k(x,y) = y U_{k-1}(x)$ (as defined in \textsection\ref{ss chebyshev polynomial intro}) explicitly as follows. Let $\FF_2$ be the finite field of order $2$.
If $\mathbf{k}$ is an $n$-vector, then a vector $\bm{\alpha} \in \FF_2^n$ is \emph{$\mathbf{k}$-supported} if $k_j=0$ implies $\alpha_j=0$.
There holds
\begin{align}
 T_{\mathbf{k}}(x_1, \dots, x_n,y_1,\ldots,y_n) &= \displaystyle \sum_{\substack{\bm{\alpha}\ \mathbf{k}\text{-supported},  \\ |\bm{\alpha}| \text{ even}}}\ \prod_{j \in [n]} T^{\alpha_j}_{k_j}(x_j,y_j), \\[3px]
 T^1_{\mathbf{k}}(x_1, \dots, x_n,y_1,\ldots, y_n) &= \displaystyle \sum_{\substack{\bm{\alpha} \ \mathbf{k}\text{-supported},  \\ |\bm{\alpha}| \text{ odd}}}\ \prod_{j \in [n]} T^{\alpha_j}_{k_j}(x_j,y_j).
\end{align}

Fix once and for all square roots $\sqrt{w_j}$ and $\sqrt{\f-w_j}$ in some field $\field$ containing $\mathbf{A}$.  For instance, we may choose $\sqrt{w_{j}} = 2 \cos (\frac{j \pi}{m})$ if $\field$ contains $\RR$.
\begin{definition}
\label{d constants ak}
Define $\mathbf{A^*} = \mathbf{A}[\sqrt{w_1},\ldots,\sqrt{w_n},\sqrt{\f-w_1},\ldots,\sqrt{\f-w_n}]$.
For each signed $n$-composition  $\mathbf{k}$ of  $k$, define the following constants, which after being multiplied by $\two^k$, belong to $\mathbf{A^*}$:
\begin{itemize}{\setlength{\itemsep}{4pt} \setlength{\topsep}{2pt}}
\item $a_{\mathbf{k}} = T_{\mathbf{k}}\left( \frac{\sqrt{w_1}}{\two}, \dots, \frac{\sqrt{w_n}}{\two}, \frac{1}{\two} \sqrt{\f - w_1}, \dots, \frac{1}{\two} \sqrt{\f - w_n} \right)$,
\item $a^1_{\mathbf{k}} = T^1_{\mathbf{k}}\left( \frac{\sqrt{w_1}}{\two}, \dots, \frac{\sqrt{w_n}}{\two}, \frac{1}{\two} \sqrt{\f - w_1}, \dots, \frac{1}{\two} \sqrt{\f - w_n} \right)$.
\end{itemize}
\end{definition}
To better understand these coefficients, it is helpful to compute the specializations
\be
\label{e a coefficients u is 1}
( a_{\mathbf{k}} ) |_{\u=1} = \cos \Big( \frac{\pi}{m} \sum_{j=1}^{n} j k_{j} \Big), \quad  ( a^{1}_{\mathbf{k}} ) |_{\u=1} = \sin \Big( \frac{\pi}{m} \sum_{j=1}^{n} j k_{j} \Big).
\ee


\subsection{}
Let $\field$ be the quotient field of $\mathbf{A^*}$.
We give names to the two-dimensional irreducibles of $\field \nsH^{(k)}_W$:
\be
\begin{array}{lll}
\nsbr{N}^{(k)}_{\mathbf{k'}} &:= \nsbr{X}^{(k)}( (\two^k a_{\mathbf{k'}})^2)& \mathbf{k'} \in \compositions{n}{\leq k}, \\
\nsbr{N}^{1 (k)}_\mathbf{k'} &:= \nsbr{X}^{(k)}( (\two^k a^1_{\mathbf{k'}})^2) \cong \epsilon_1 \tsr \nsbr{N}_{\mathbf{k'}}^{(k-1)}& \mathbf{k'} \in \compositions{n}{ < k}.
\end{array}
\ee

Note that these definitions make sense because  $(\two^k a_{\mathbf{k'}})^2$ and  $(\two^k a^1_{\mathbf{k'}})^2$ are independent of the equivalence class of $\mathbf{k'}.$
The isomorphism in the second line will be seen in the course of the proof below.
We also remark that $\nsbr{N}_{\mathbf{k'}}^{(k)} \cong \epsilon_{+}^{\tsr k - k'} \tsr \nsbr{N}_{\mathbf{k'}}^{(k')}$, where  $k' = |\mathbf{k'}|$, is an isomorphism of  $\field \nsH^{(k)}_W$-modules.  In the special case that $\mathbf{k}$ has only one nonzero component $k_j = k$,  $\nsbr{N}_{\mathbf{k}}^{(k)} \cong \nsbr{X}^{(k)}( (\two^k T_{k}(\frac{\sqrt{w_j}}{[2]}) )^2)$ is similar to the representation $\nsbr{M}^{(k)}$ in the $W = \S_3$ case.

\begin{theorem}
\label{t nsH W 2d reps}
Let $W, m,$ and $n$ be as in Theorem \ref{t dihedral irreducibles}.
For  $\field$ the quotient field of $\mathbf{A^*}$, the irreducible representations of $\field \nsH^{(k)}_W$ consist of
\begin{list}{\emph{(\alph{ctr})}} {\usecounter{ctr} \setlength{\itemsep}{1pt} \setlength{\topsep}{2pt}}
\item the trivial and sign representations $\epsilon_+$, $\epsilon_-$,
\item for each $\mathbf{k'} \in \compositions{n}{\leq k}$, the two-dimensional representation  $\nsbr{N}^{(k)}_\mathbf{k'}$,
\item if $m$ is even, two one-dimensional representations $\nsbr{\epsilon}_1 \cong \epsilon_1 \tsr \epsilon_+^{\tsr k-1}$ and $\nsbr{\epsilon}_2 \cong \epsilon_2 \tsr \epsilon_+^{\tsr k-1}$ given by
\[
\begin{array}{cll}
\nsbr{\epsilon}_1: & \sP^{(k)}_1 \mapsto \two^k, & \sP^{(k)}_2 \mapsto 0, \\
\nsbr{\epsilon}_2: & \sP^{(k)}_1 \mapsto 0, & \sP^{(k)}_2 \mapsto \two^k,
\end{array}
\]
\item if $m$ is even, for each $\mathbf{k'} \in \compositions{n}{< k}$, the two-dimensional representation $\nsbr{N}^{1 (k)}_\mathbf{k'}$.
\end{list}
\end{theorem}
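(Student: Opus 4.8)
The plan is to follow the proof of Theorem~\ref{t nsH S3 2D reps}, of which this is the special case $W=\S_3$ (i.e.\ $m=3$, $n=1$): induct on $k$, use the inclusion $\bar{\Delta}^{(1,k-1)}\colon\nsH^{(k)}_W\hookrightarrow\H_W\tsr\nsH^{(k-1)}_W$ of \eqref{e delta kl kr definition} to reduce everything to decomposing tensor products $X_l\tsr\bar{X}_r$ with $X_l$ an irreducible $\field\H_W$-module (so $X_l\in\{\epsilon_+,\epsilon_-\}$, or $X_l=M_j$ for some $j\in[n]$, or $X_l\in\{\epsilon_1,\epsilon_2\}$ when $m$ is even, by Theorem~\ref{t dihedral irreducibles}) and $\bar{X}_r$ an irreducible $\field\nsH^{(k-1)}_W$-module, the latter known by the inductive hypothesis; the base case $k=1$ is Theorem~\ref{t dihedral irreducibles}. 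Compared to the $\S_3$ case there are three new ingredients: a multivariate Chebyshev identity generalizing Lemma~\ref{l Chebyshev quadratic identity}, the extra pieces $\epsilon_1,\epsilon_2$ and the family $\bar{N}^{1(k)}_{\mathbf{k'}}$ that occur for $m$ even, and some combinatorial bookkeeping with signed compositions.

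The crux is the decomposition of $M_j\tsr\bar{N}^{(k-1)}_{\mathbf{k'}}$. Writing $x_i=\cos\theta_i$, $y_i=\sin\theta_i$ in $R$, the cosine-of-a-sum formula gives the identities $T_{\mathbf{k}\mp\mathbf{e}_j}=x_jT_{\mathbf{k}}\pm y_jT^1_{\mathbf{k}}$ and $T^1_{\mathbf{k}\mp\mathbf{e}_j}=x_jT^1_{\mathbf{k}}\mp y_jT_{\mathbf{k}}$, together with the Pythagorean relation $T_{\mathbf{k}}^2+(T^1_{\mathbf{k}})^2=1$. Now apply Proposition~\ref{p main tensor product computation} with $k_l=1$, $k_r=k-1$, $c_l=w_j$, $c_r=(\two^{k-1}a_{\mathbf{k'}})^2$: here $\f^{k_l}-c_l=\f-w_j$ and, by the Pythagorean relation, $\f^{k_r}-c_r=\two^{2(k-1)}(a^1_{\mathbf{k'}})^2$, so that after evaluating the Chebyshev identities at $x_i=\sqrt{w_i}/\two$, $y_i=\sqrt{\f-w_i}/\two$ one gets $a_\pm=\two^k a_{\mathbf{k'}\mp\mathbf{e}_j}$. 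Thus $M_j\tsr\bar{N}^{(k-1)}_{\mathbf{k'}}\cong\bar{N}^{(k)}_{\mathbf{k'}-\mathbf{e}_j}\oplus\bar{N}^{(k)}_{\mathbf{k'}+\mathbf{e}_j}$ generically, while the degenerate cases of \eqref{e xl xr decomp} replace a summand $\bar{N}^{(k)}$ by $\bar{\epsilon}_+\oplus\bar{\epsilon}_-$ (resp.\ $\bar{\epsilon}_1\oplus\bar{\epsilon}_2$) precisely when its would-be defining constant is $\f^k$ (resp.\ $0$). As $\mathbf{k'}$ runs over signed compositions of $k-1$ and $j$ over $[n]$, the classes $[\mathbf{k'}\pm\mathbf{e}_j]$ produce every class in $\compositions{n}{\leq k}$, which is how the new two-dimensional irreducibles enter.

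The remaining tensor products go exactly as in the $\S_3$ argument. Tensoring with $\epsilon_+$ gives the identifications $\bar{N}^{(k)}_{\mathbf{k'}}\cong\epsilon_+^{\tsr k-k'}\tsr\bar{N}^{(k')}_{\mathbf{k'}}$ and $\bar{N}^{1(k)}_{\mathbf{k'}}\cong\epsilon_1\tsr\bar{N}^{(k-1)}_{\mathbf{k'}}$, checked directly on the matrices \eqref{e bar X action} (this also justifies the second line of the definition of $\bar{N}^{1(k)}_{\mathbf{k'}}$); tensoring with $\epsilon_-$ is rewritten via Proposition~\ref{p 1 op theta op representation facts} using the self-dualities $\bar{N}^{(j)}_{\mathbf{k'}}\cong(\bar{N}^{(j)}_{\mathbf{k'}})^\dualone\cong(\bar{N}^{(j)}_{\mathbf{k'}})^\dualtheta$ and the identities $\epsilon_-^\dualtheta\cong\epsilon_+$, $\epsilon_1^\dualtheta\cong\epsilon_2$ (carried along in the induction, with base case the self-duality of $M_j$ and the action of $\theta$ on the one-dimensional $\H_W$-modules); and, when $m$ is even, tensoring with $\epsilon_1$ or $\epsilon_2$ is computed from the matrices to give $\bar{N}^{1(k)}_{\mathbf{k'}}$. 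It then remains to (i) check the bookkeeping claim above, (ii) verify that the listed modules are pairwise non-isomorphic and irreducible, which by \eqref{e bar X distinct} and \eqref{e bar X irreducible} amounts to $a_{\mathbf{k'}}\neq0$, $a_{\mathbf{k'}}^2\neq1$, $a_{\mathbf{k'}}^2\neq a_{\mathbf{k''}}^2$ and $(a^1_{\mathbf{k'}})^2\neq a_{\mathbf{k''}}^2$ for inequivalent signed compositions, and (iii) conclude completeness, since every composition factor of every $X_l\tsr\bar{X}_r$ now lies in the list, together with (split) semisimplicity of $\field\nsH^{(k)}_W$ by the argument of Corollary~\ref{c nsh 3 semisimple}; this closes the induction.

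I expect step (ii) to be the main obstacle: the specialization $\u=1$ of \eqref{e a coefficients u is 1} is of no help, since cosines have genuine coincidences, so one must instead extract the leading term of the Laurent polynomial $a_{\mathbf{k'}}$ in $\u$ — as $\u\to\infty$, $x_j\to0$ and $y_j\to1$, and the dominant contribution comes from the all-ones $\bm{\alpha}$-term — and argue that its coefficient, a monomial in the $\sqrt{w_j}$, separates the equivalence classes of signed compositions. Pinning down exactly which instances of \eqref{e xl xr decomp} are degenerate, in terms of signed compositions, is the other fiddly point. By contrast the multivariate Chebyshev identity is a one-line trigonometric computation, and the duality bookkeeping in the $\epsilon_-$, $\epsilon_1$, $\epsilon_2$ cases parallels the $\S_3$ proof closely.
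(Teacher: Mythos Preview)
Your inductive strategy, the tensor-product decomposition via Proposition~\ref{p main tensor product computation}, the multivariate Chebyshev identity, the duality argument for $\epsilon_-$, and the explicit change of basis for $\epsilon_1,\epsilon_2$ all match the paper's proof. The paper records the Chebyshev identity in the symmetric form $T_{\mathbf{k}_l\mp\mathbf{k}_r}=T_{\mathbf{k}_l}T_{\mathbf{k}_r}\pm T^1_{\mathbf{k}_l}T^1_{\mathbf{k}_r}$ (Lemma~\ref{l Chebyshev a mathbf k}) and phrases the decomposition for general $\bar{N}^{(k_l)}_{\mathbf{k}_l}\tsr\bar{N}^{(k_r)}_{\mathbf{k}_r}$, but only the case $|\mathbf{k}_l|=1$ is actually needed, so this is your $\mathbf{e}_j$ version.

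The substantive divergence is your step (ii). Your leading-term heuristic is not correct as stated: writing $[2]^k a_{\mathbf{k}}=\text{Re}\prod_j(\sqrt{w_j}+i\,\text{sgn}(k_j)\sqrt{[2]^2-w_j})^{|k_j|}$ and expanding $\sqrt{[2]^2-w_j}=\u+O(\u^{-1})$, the top-degree term is $\text{Re}\bigl(i^k\prod_j\text{sgn}(k_j)^{|k_j|}\bigr)\u^k$, which carries no $\sqrt{w_j}$ at all and depends only on crude parity data; it collapses many inequivalent signed compositions (for $n=2$, $k=2$ the classes $(2,0)$, $(0,2)$, $(1,1)$, $(1,-1)$ all give leading term $-\u^2$). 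One must go to subleading order, and it is not clear a single coefficient ever suffices. The paper instead isolates distinctness as Proposition~\ref{p distinct coefficients} and proves it by a real-analytic argument: specialize $\u$ to positive reals, regard each $(\two^k a_{\mathbf{k}})^2$ as a function of $x=1/[2]\in(0,\tfrac12]$, and observe that equality of two such functions on an interval forces an integer linear relation among $1,\arccos(\sqrt{w_1}\,x),\dots,\arccos(\sqrt{w_n}\,x)$ modulo $\pi\ZZ$. Lemma~\ref{l analytic independence} then shows that for any non-polynomial analytic $g$ and distinct positive scalars $z_j$, the functions $1,g(z_1 x),\dots,g(z_n x)$ are linearly independent over $\RR$ on any set with a limit point; applied to $g=\arccos$ this forces the relation to be trivial, hence $\mathbf{k'}$ equivalent to $\mathbf{k''}$. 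This analytic independence lemma is the missing ingredient in your outline.
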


\begin{proof}
The proof is by induction on $k$ and similar to that of Theorem \ref{t nsH S3 2D reps}.  The base case $k=1$ is Theorem \ref{t dihedral irreducibles}.

As in the proof of Theorem \ref{t nsH S3 2D reps}, it suffices to decompose $X_{l}\tsr \nsbr{X}_{r}$ into irreducibles for every irreducible $\field \H_W$-module $X_l$ and irreducible $\field \nsH^{(k-1)}_W$-module $\nsbr{X}_r$.
The calculation we need is a special case of the following:
suppose that $k_l, k_r$ are positive integers such that  $k= k_l+k_r$ and $\mathbf{k}_l,\mathbf{k}_r$ are signed $n$-compositions of $k_l$, $k_r$ respectively.
Then by Proposition \ref{p main tensor product computation} and Lemma \ref{l Chebyshev a mathbf k} (below), the $\field \nsH^{(k)}_W$-module $\nsbr{N}^{(k_{l})}_{\mathbf{k}_l} \tsr \nsbr{N}^{(k_{r})}_{\mathbf{k}_r}$ decomposes into irreducibles as (assume without loss of generality that if $\mathbf{k}_l$ is equivalent to $\mathbf{k}_r$, then $\mathbf{k}_l = \mathbf{k}_r$)
\begin{alignat}{2}
\label{e general tensor product rule N bfk}
\nsbr{N}^{(k_{l})}_{\mathbf{k}_l} \tsr \nsbr{N}^{(k_{r})}_{\mathbf{k}_r} &\cong \nsbr{N}^{(k)}_{\mathbf{k}_l - \mathbf{k}_r} \oplus \nsbr{N}^{(k)}_{\mathbf{k}_l + \mathbf{k}_r}
&\text{\qquad if $\mathbf{k}_l \neq \mathbf{k}_r$}, \\
\label{e general tensor product rule N bfk 2}
\nsbr{N}^{(k_{l})}_{\mathbf{k}_l} \tsr \nsbr{N}^{(k_{r})}_{\mathbf{k}_r} &\cong \nsbr{\epsilon}_+ \oplus \nsbr{\epsilon}_- \oplus \nsbr{N}^{(k)}_{\mathbf{k}_l + \mathbf{k}_r}  &\text{\qquad if $\mathbf{k}_l = \mathbf{k}_r$.}
\end{alignat}
In particular, if we are given some signed $n$-composition $\mathbf{k}$ of $k$, then we may certainly choose $\mathbf{k}_l,\mathbf{k}_r$ such that  $\mathbf{k}_l + \mathbf{k}_r = \mathbf{k}$. Thus the first summand of the right-hand side of (\ref{e general tensor product rule N bfk}) is  $\nsbr{N}^{(k)}_\mathbf{k}$, so this is a representation of  $\field \nsH^{(k)}_W$.  And certainly $|\mathbf{k}_{l}-\mathbf{k}_{r}|$, $|\mathbf{k}_{l}+\mathbf{k}_{r}| \leq k$, so the irreducibles on the right-hand side of \eqref{e general tensor product rule N bfk} and \eqref{e general tensor product rule N bfk 2} are in our list.

We now must consider the cases $X_l = \epsilon_+, \epsilon_-, \epsilon_1, \epsilon_2$.
For the case $X_l = \epsilon_+$, we have $\epsilon_{+} \tsr \nsbr{N}_{\mathbf{k'}}^{(k-1)} \cong \nsbr{N}_{\mathbf{k'}}^{(k)}$ and
$\epsilon_{+} \tsr \nsbr{N}_{\mathbf{k'}}^{1 (k-1)} \cong \nsbr{N}_{\mathbf{k'}}^{1 (k)}$, and these are in our list of irreducibles (where $\mathbf{k'} \in \compositions{n}{ \le k-1}$,  $\mathbf{k'} \in \compositions{n}{ < k-1}$ for the first, second isomorphism respectively).

Next, we consider the case $X_l = \epsilon_-$. We have
\be \label{e dual bar N}
\epsilon_- \tsr \nsbr{X}_r \cong \epsilon_- \tsr ((\nsbr{X}_r)^{\dualone})^\dualone \cong \epsilon_+ \tsr ((\nsbr{X}_r)^\dualone)^{\dualtheta},
\ee
where the second isomorphism is by Proposition \ref{p 1 op theta op representation facts}.  Hence this case follows from the  $X_l = \epsilon_+$ case.

Finally, we consider the case that $m$ is even and $X_{l}=\epsilon_{1}$ or $X_{l}=\epsilon_{2}$. The action of $\sP^{(k)}_i$ on $\epsilon_1 \tsr \nsbr{X}^{(k-1)}(c)$ is given by
\be
\label{e E bar X action}
\sP^{(k)}_1 \mapsto \mat{[2]^{k} &  \two c \\ 0 & 0},\ \sP^{(k)}_2 \mapsto \mat{\two^k & 0 \\ -\two & 0 }.
\ee
Changing to the basis
\[ \left( \mat{1 \\ 0 } , \mat{\two^k \\ \two} \right) \]
shows that $\epsilon_1 \tsr \nsbr{X}^{(k-1)}(c) \cong \nsbr{X}^{(k)}(\f^k - c \f)$, hence $\epsilon_1 \tsr \nsbr{N}^{(k-1)}_\mathbf{k'} \cong \nsbr{N}^{1 (k)}_\mathbf{k'}$.  A similar computation shows that $\epsilon_2 \tsr \nsbr{N}^{(k-1)}_\mathbf{k'} \cong \nsbr{N}^{1 (k)}_\mathbf{k'}$.  Also note
$\epsilon_1 \tsr \nsbr{\epsilon}_1 \cong \nsbr{\epsilon}_+$,
$\epsilon_1 \tsr \nsbr{\epsilon}_2 \cong \nsbr{\epsilon}_-$,
$\epsilon_2 \tsr \nsbr{\epsilon}_2 \cong \nsbr{\epsilon}_+$,
and
$\epsilon_2 \tsr \nsbr{\epsilon}_1 \cong \nsbr{\epsilon}_-$.

It remains to prove that the list of irreducibles is distinct, and this is Proposition \ref{p distinct coefficients} (below).
\end{proof}

\begin{corollary}\label{c nsh W semisimple}
The algebra $\field \nsH^{(k)}_W$ is split semisimple for $\field$ the quotient field of  $\mathbf{A^*}$, where
 $\mathbf{A^*}$ is as in Definition \ref{d constants ak}.
\end{corollary}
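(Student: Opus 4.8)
The plan is to imitate the proof of Corollary \ref{c nsh 3 semisimple}, with Theorem \ref{t nsH W 2d reps} in place of Theorem \ref{t nsH S3 2D reps}, and argue by induction on $k$. The base case $k=1$, where $\nsH^{(1)}_W = \H_W$, is Theorem \ref{t dihedral irreducibles} (the listed modules being absolutely irreducible — for the $M_j = \bar{X}^{(1)}(w_j)$ by \eqref{e bar X irreducible} — and $\field\H_W$ being semisimple). For the inductive step I would view the inclusion $\bar{\Delta}^{(1,k-1)} : \nsH^{(k)}_W \hookrightarrow \H_W \tsr \nsH^{(k-1)}_W$ of \eqref{e delta kl kr definition} as a map of left $\nsH^{(k)}_W$-modules. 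Since $\field$ contains $\QQ(\u) = \text{Frac}(\mathbf{A})$, it is a flat $\mathbf{A}$-module, so $\field\nsH^{(k)}_W \hookrightarrow \field\H_W \tsr \field\nsH^{(k-1)}_W$ is still injective; hence it suffices to check that for every irreducible $\field\H_W$-module $X_l$ and every irreducible $\field\nsH^{(k-1)}_W$-module $\bar{X}_r$ (these being, by induction, the modules listed in Theorem \ref{t nsH W 2d reps} at level $k-1$), the restriction of $X_l \tsr \bar{X}_r$ to $\field\nsH^{(k)}_W$ via $\bar{\Delta}^{(1,k-1)}$ is a direct sum of irreducibles. This is exactly the case analysis carried out in the proof of Theorem \ref{t nsH W 2d reps} ($X_l = \epsilon_\pm$ by duality; $X_l = M_j$ by Proposition \ref{p main tensor product computation}; $X_l = \epsilon_1, \epsilon_2$ by a change of basis).

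I would then address splitness. The one-dimensional simples $\epsilon_\pm, \bar{\epsilon}_1, \bar{\epsilon}_2$ are trivially absolutely irreducible, while the two-dimensional ones $\bar{N}^{(k)}_{\mathbf{k'}}$ and $\bar{N}^{1(k)}_{\mathbf{k'}}$ are of the form $\bar{X}^{(k)}(c)$ with $c \notin \{0, \f^k\}$ — the non-vanishing of $c$ and of $\f^k - c$ being precisely what is checked while establishing distinctness in Theorem \ref{t nsH W 2d reps} (cf. Proposition \ref{p distinct coefficients}) — hence absolutely irreducible by \eqref{e bar X irreducible}. Since a semisimple algebra all of whose simple modules are absolutely irreducible is split, this completes the proof.

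The one point needing genuine attention — and the only respect in which the dihedral case exceeds the $\S_3$ case — is verifying that the hypotheses of Proposition \ref{p main tensor product computation} hold over $\field$, i.e. that the square roots $a_l a_r, a_\pm$ occurring there lie in $\field$. The key is the identity $T_{\mathbf{k}}^2 + (T^1_{\mathbf{k}})^2 = 1$ in $R$ (it is $\cos^2 + \sin^2 = 1$ for the angle $\sum_j k_j \theta_j$): evaluated at the point of Definition \ref{d constants ak} it yields $\sqrt{\f^{k'} - (\two^{k'} a_{\mathbf{k'}})^2} = \two^{k'} a^1_{\mathbf{k'}} \in \mathbf{A^*}$ (and the same with $a$ and $a^1$ interchanged), so every square root appearing in Proposition \ref{p main tensor product computation} already lies in $\mathbf{A^*} \subseteq \field$. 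Granting this bookkeeping — and noting that all four cases of \eqref{e xl xr decomp}, the degenerate ones included, are visibly direct sums of irreducibles — the corollary is a purely formal consequence of Theorem \ref{t nsH W 2d reps}, and I expect no substantive obstacle beyond it.
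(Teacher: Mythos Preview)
Your proposal is correct and follows essentially the same approach as the paper: induct on $k$, use flatness to pass the inclusion $\bar{\Delta}^{(1,k-1)}$ to $\field$, and appeal to the explicit decompositions in the proof of Theorem~\ref{t nsH W 2d reps} together with \eqref{e bar X irreducible} for absolute irreducibility; the paper simply cites \cite[Corollary 8.3.2]{GeckP} for the base case $k=1$. Your explicit verification that $a_l a_r, a_\pm \in \mathbf{A^*}$ via the identity $T_{\mathbf{k}}^2 + (T^1_{\mathbf{k}})^2 = 1$ is a point the paper leaves implicit, and it is well taken.
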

\begin{proof}
This follows from the proof above, similar to the $W = \S_3$ case.  The base case $k=1$ requires the split semisimplicity of $\field \H_{W}$.  A proof of this is given in \cite[Corollary 8.3.2]{GeckP}.
\end{proof}

\begin{lemma}
\label{l Chebyshev a mathbf k}
For $\mathbf{k}_l,\mathbf{k}_r$ signed $n$-compositions of $k_l$, $k_r$ respectively, there holds
\[ T_{\mathbf{k}_l \mp \mathbf{k}_r} = T_{\mathbf{k}_l} T_{\mathbf{k}_r} \pm T^1_{\mathbf{k}_l} T^1_{\mathbf{k}_r}. \]
\end{lemma}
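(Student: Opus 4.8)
The plan is to imitate the proof of Lemma \ref{l Chebyshev quadratic identity} essentially word for word, replacing the single angles $k_l\theta$, $k_r\theta$ there by the linear combinations $\phi_l := \sum_{j=1}^n k_{l,j}\theta_j$ and $\phi_r := \sum_{j=1}^n k_{r,j}\theta_j$. By definition, the polynomial $T_{\mathbf{k}_l}$ (resp. $T^1_{\mathbf{k}_l}$) becomes $\cos\phi_l$ (resp. $\sin\phi_l$) under the substitution $x_j \mapsto \cos\theta_j$, $y_j \mapsto \sin\theta_j$, and likewise for $\mathbf{k}_r$. Since the signed $n$-composition $\mathbf{k}_l \mp \mathbf{k}_r$ has $j$-th entry $k_{l,j} \mp k_{r,j}$, the linear combination of angles attached to it is precisely $\phi_l \mp \phi_r$. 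Hence, under the same substitution, $T_{\mathbf{k}_l}T_{\mathbf{k}_r} \pm T^1_{\mathbf{k}_l}T^1_{\mathbf{k}_r}$ becomes
\[
\cos\phi_l\cos\phi_r \pm \sin\phi_l\sin\phi_r = \cos(\phi_l \mp \phi_r),
\]
which is exactly the value of $T_{\mathbf{k}_l \mp \mathbf{k}_r}$. The coupling of the signs is automatic: $\cos(\phi_l - \phi_r)$ carries $+\sin\phi_l\sin\phi_r$ and corresponds to $\mathbf{k}_l - \mathbf{k}_r$, while $\cos(\phi_l + \phi_r)$ carries $-\sin\phi_l\sin\phi_r$ and corresponds to $\mathbf{k}_l + \mathbf{k}_r$ (including the degenerate case $\mathbf{k}_l = \mathbf{k}_r$, where the left side is $T_{\mathbf{0}} = 1$).

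The one point that then needs a word of justification is that this equality of trigonometric functions of $(\theta_1,\dots,\theta_n)$ forces the corresponding equality in the ring $R = \ZZ[x_1,\dots,x_n,y_1,\dots,y_n]/\bigoplus_j(x_j^2+y_j^2-1)$. For this I would note that $R$ is free as a $\ZZ$-module on the monomials $\prod_j x_j^{a_j}y_j^{\alpha_j}$ with $a_j \geq 0$ and $\alpha_j \in \{0,1\}$, and that the corresponding functions $\prod_j \cos^{a_j}\theta_j \sin^{\alpha_j}\theta_j$ are $\RR$-linearly independent; this reduces, one variable at a time (Fourier, or a Vandermonde argument), to the classical fact that $1,\cos\theta,\cos 2\theta,\dots$ together with $\sin\theta,\sin 2\theta,\dots$ are linearly independent. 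Thus the evaluation homomorphism from $R$ to functions is injective, and applying it to the difference of the two sides of the lemma finishes the proof. This is the same well-definedness issue that already underlies the definition of $T_{\mathbf{k}}$ and $T^1_{\mathbf{k}}$, so in the write-up one may simply remark that the claimed identity holds under the universal substitution $x_j = \cos\theta_j$, $y_j = \sin\theta_j$ and invoke this injectivity once.

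I do not expect any genuine obstacle: the lemma is a purely formal consequence of the cosine addition formula, exactly as in the $n=1$ case (Lemma \ref{l Chebyshev quadratic identity}). If a proof avoiding any appeal to real analysis were preferred, an alternative would be to expand both sides using the explicit formulas for $T_{\mathbf{k}}$, $T^1_{\mathbf{k}}$ in terms of the univariate $T^0_{k_j}$, $T^1_{k_j}$ given just before the lemma, and reduce, term by term over the indexing vectors $\bm\alpha \in \FF_2^n$, to the one-variable sum and difference formulas for $T^0$ and $T^1$; but this amounts to repackaging the same trigonometric identity, so I would present the trigonometric argument.
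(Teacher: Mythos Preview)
Your proof is correct and follows exactly the same approach as the paper: both reduce the identity to the cosine addition formula applied to the angles $\phi_l = \sum_j k_{l,j}\theta_j$ and $\phi_r = \sum_j k_{r,j}\theta_j$. The paper states this in one line and does not pause to justify the injectivity of the evaluation map $R \to \{\text{functions}\}$; your extra paragraph making this explicit is a welcome addition but not something the paper deems necessary, since (as you note) it is already implicit in the well-definedness of $T_{\mathbf{k}}$ and $T^1_{\mathbf{k}}$.
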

\begin{proof}
This is immediate from the trigonometric identity for  the cosine of a sum of angles:
\be
\begin{array}{ll}
  T_{\mathbf{k}_l} T_{\mathbf{k}_r} \pm T^1_{\mathbf{k}_l} T^1_{\mathbf{k}_r} \\
  &= \cos(k_{l1} \theta_1 + \dots + k_{ln} \theta_n) \cos(k_{r1} \theta_1 + \dots + k_{rn} \theta_n) \pm \\
& \quad \sin(k_{l1} \theta_1 + \dots + k_{ln} \theta_n) \sin(k_{r1} \theta_1 + \dots + k_{rn} \theta_n)\\
& = \cos((k_{l1} \mp k_{r1})\theta_1+\dots+(k_{ln} \mp k_{rn})\theta_n)\\
& = T_{\mathbf{k}_l \mp \mathbf{k}_r}.
\end{array}
\ee
\end{proof}

\begin{proposition}
\label{p distinct coefficients}
The coefficients $ ( \two^{k} a_{\mathbf{k}'})^{2}$,  $\mathbf{k'} \in \compositions{n}{\leq k}$, together with, if $m$ is even, $(\two^{k} a^{1}_{\mathbf{k'}})^{2}$, $\mathbf{k'} \in \compositions{n}{< k}$, are distinct elements of $\mathbf{A}^{*}$ (see Definition \ref{d constants ak}).
\end{proposition}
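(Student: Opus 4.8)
The plan is to reduce the statement to a trigonometric one by specializing $\u$ to real numbers near $1$. Because distinctness is the substantive content, we may realize the data of Definition~\ref{d constants ak} inside a field of germs: take $\field$ to consist of germs at $t=1$ of functions meromorphic near $1$, with $\u\mapsto t$, with $\sqrt{w_j}$ equal to the constant $2\cos(j\pi/m)$ (the choice suggested in Definition~\ref{d constants ak}, legitimate since $w_j=4\cos^2(j\pi/m)$), and with $\sqrt{\f-w_j}$ the holomorphic square root of $(t+t^{-1})^2-w_j$ that takes the value $2\sin(j\pi/m)>0$ at $t=1$ (this exists since $4-w_j=4\sin^2(j\pi/m)>0$ for $j\in[n]$, using $1\le j<m/2$). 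With these choices $\frac{\sqrt{w_j}}{\two}$ and $\frac{\sqrt{\f-w_j}}{\two}$ become $\cos\theta_j(t)$ and $\sin\theta_j(t)$, where $\theta_j(t)=\arccos\big(\tfrac{2\cos(j\pi/m)}{t+t^{-1}}\big)$ is the analytic germ with $\theta_j(1)=j\pi/m$. Hence, by the defining property of $T_{\mathbf k}$ and $T^1_{\mathbf k}$ and writing $\Theta_{\mathbf m}(t)=\sum_{j=1}^n m_j\theta_j(t)$ for $\mathbf m\in\ZZ^n$,
\[ (\two^k a_{\mathbf{k}'})^2 = (t+t^{-1})^{2k}\cos^2\Theta_{\mathbf{k}'}(t), \qquad (\two^k a^1_{\mathbf{k}'})^2 = (t+t^{-1})^{2k}\sin^2\Theta_{\mathbf{k}'}(t). \]
Since $(t+t^{-1})^{2k}$ is a nonzero germ, distinctness of the coefficients is equivalent to distinctness of the germs $\cos^2\Theta_{\mathbf{k}'}$ ($\mathbf{k}'\in\compositions{n}{\le k}$) together with the germs $\sin^2\Theta_{\mathbf{k}'}$ ($\mathbf{k}'\in\compositions{n}{<k}$, when $m$ is even).

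The next step is to apply product-to-sum identities. Using the linearity $\Theta_{\mathbf m}+\Theta_{\mathbf m'}=\Theta_{\mathbf m+\mathbf m'}$ one has $\cos 2\Theta_{\mathbf{k}'}-\cos 2\Theta_{\mathbf{k}''}=-2\sin\Theta_{\mathbf{k}'+\mathbf{k}''}\sin\Theta_{\mathbf{k}'-\mathbf{k}''}$ and $\cos 2\Theta_{\mathbf{k}'}+\cos 2\Theta_{\mathbf{k}''}=2\cos\Theta_{\mathbf{k}'+\mathbf{k}''}\cos\Theta_{\mathbf{k}'-\mathbf{k}''}$. In the ring of germs (an integral domain) a product vanishes iff a factor vanishes, and $\sin\Theta_{\mathbf m}$ (resp.\ $\cos\Theta_{\mathbf m}$) is the zero germ iff $\Theta_{\mathbf m}$ is a constant germ belonging to $\pi\ZZ$ (resp.\ to $\tfrac{\pi}{2}+\pi\ZZ$), since $\Theta_{\mathbf m}$ is continuous on a connected neighborhood of $1$. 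So everything comes down to one claim: $\Theta_{\mathbf m}$ is a \emph{non-constant} germ for every nonzero $\mathbf m\in\ZZ^n$. Granting this, $\cos^2\Theta_{\mathbf{k}'}=\cos^2\Theta_{\mathbf{k}''}$ forces $\mathbf{k}'+\mathbf{k}''=0$ or $\mathbf{k}'-\mathbf{k}''=0$, i.e.\ $\mathbf{k}'$ and $\mathbf{k}''$ represent the same class in $\compositions{n}{\le k}$; the identical argument handles the germs $\sin^2\Theta_{\mathbf{k}'}$; and $\cos^2\Theta_{\mathbf{k}'}=\sin^2\Theta_{\mathbf{k}''}$, i.e.\ $\cos 2\Theta_{\mathbf{k}'}=-\cos 2\Theta_{\mathbf{k}''}$, would force $\Theta_{\mathbf{k}'\pm\mathbf{k}''}$ to be a constant germ in $\tfrac{\pi}{2}+\pi\ZZ$, hence a nonzero constant, contradicting the claim.

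Proving the claim is the main obstacle. Differentiation gives
\[ \Theta_{\mathbf m}'(t)=\frac{1-t^{-2}}{t+t^{-1}}\ \sum_{j=1}^n \frac{m_j\,\sqrt{w_j}}{\sqrt{(t+t^{-1})^2-w_j}}, \]
and since $(1-t^{-2})/(t+t^{-1})$ and each $\sqrt{w_j}=2\cos(j\pi/m)$ are nonzero germs, $\Theta_{\mathbf m}$ is constant iff $\sum_{j\in S}c_j\,\psi_j\equiv 0$ near $t=1$, where $S=\{j:m_j\neq 0\}$, the $c_j$ are nonzero, and $\psi_j(t)=\big((t+t^{-1})^2-w_j\big)^{-1/2}$. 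Each $\psi_j$ continues to a multivalued analytic function whose branch points are the zeros of $(t+t^{-1})^2-w_j$ --- namely $e^{\pm i j\pi/m}$ and $e^{\pm i(m-j)\pi/m}$, all simple, as one checks --- and since $w_1,\dots,w_n$ are distinct positive reals and $1\le j<m/2$, the branch-point sets of the $\psi_j$ are pairwise disjoint. If $S\neq\emptyset$, fix $j_0\in S$ and analytically continue the relation $\sum_{j\in S}c_j\psi_j\equiv 0$ along a path in $\CC\setminus\{0\}$ that avoids all branch points and ends near $e^{ij_0\pi/m}$; there $c_{j_0}\psi_{j_0}$ is unbounded while every other $\psi_j$ is holomorphic, hence bounded, near $e^{ij_0\pi/m}$ --- contradiction. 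Therefore $S=\emptyset$ and $\mathbf m=0$, which completes the proof. (When $m=2$ one has $n=0$ and there is nothing to prove; the dihedral groups considered here have $m\ge 3$.) The remaining points --- simplicity of the branch points, existence of such a path, and pairwise disjointness of the branch-point sets --- are routine.
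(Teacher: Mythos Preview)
Your proof is correct and follows the same overall plan as the paper: specialize $\u$ to real values, rewrite $(\two^k a_{\mathbf{k}'})^2$ as $\cos^2$ of a linear combination of $\arccos$ terms, and reduce distinctness to showing that a nonzero integer combination $\sum_j m_j\,\arccos(\sqrt{w_j}\,x)$ cannot be constant. The difference lies entirely in how you establish this last linear-independence step.

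The paper isolates an abstract lemma (Lemma~\ref{l analytic independence}): if $g$ is real analytic, non-polynomial, and $z_1,\dots,z_n$ are distinct positive reals, then $1,g(z_1 x),\dots,g(z_n x)$ are $\RR$-linearly independent on any set with a limit point.  The proof is a one-line power-series argument---after normalizing so that $z_1$ is largest, the coefficient of $x^i$ in a putative relation is dominated by $c_1 z_1^i$ for the infinitely many $i$ with $a_i\neq 0$.  This is more elementary than your approach and yields a reusable statement.

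Your route is to differentiate and reduce to the linear independence of the germs $\psi_j(t)=\big((t+t^{-1})^2-w_j\big)^{-1/2}$, which you then prove by analytic continuation: the $\psi_j$ have pairwise disjoint branch loci on the unit circle, so continuing a putative relation toward a branch point of one $\psi_{j_0}$ isolates the unbounded term.  This is a clean argument that exploits the specific algebraic nature of the functions (square-root singularities at explicit points) rather than a general growth principle.  Both arguments are short; the paper's is slightly more elementary, while yours makes the obstruction to a relation geometrically visible.  Your handling of the $\cos^2=\sin^2$ cross case via the product-to-sum identity is also marginally cleaner than the paper's case split on whether $\alpha_+$ ever leaves $\pi\ZZ$, though the content is the same.
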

\begin{proof}
It suffices to consider the specializations $\sigma :\mathbf{A} \to \RR$, $\sigma(\u) \in \RR_{>0}$. Set $x = \frac{1}{\sigma(\u) + \sigma(\ui)}$; the image of the map $\RR_{>0} \to \RR$, $\sigma(\u) \mapsto x$ is $(0,\frac{1}{2}]$.
If $m$ is odd, it suffices to show that for any signed $n$-compositions $\mathbf{k'}, \mathbf{k''}$,
\[ \cos^{2} (k'_{1} \arccos (\sqrt{w_{1}} x ) + \dots + k'_{n} \arccos(\sqrt{w_{n}} x ) ) = \cos^{2} (k''_{1} \arccos (\sqrt{w_{1}} x ) + \dots + k''_{n} \arccos(\sqrt{w_{n}} x ) )  \]
(an equality of real-valued functions on  $(0,\frac{1}{2}]$) implies $\mathbf{k'}$ is equivalent to $\mathbf{k''}$. Observe that $\cos^{2} (a) = \cos^{2} (b), a, b \in \RR,$ if and only if
\[ a + b \in 2 \pi \ZZ \cup (\pi + 2 \pi \ZZ ) \text{\ \ or \ \ } a-b \in 2 \pi \ZZ \cup (\pi + 2 \pi \ZZ ). \]

Define the functions $\alpha_{\pm} : (0,\frac{1}{2}] \to \RR$ by
\[ x \mapsto (k_{1}' \pm k_{1}'') \arccos (\sqrt{w_{1}} x ) + \dots + (k_{n}' \pm k_{n}'') \arccos (\sqrt{w_{n}} x ) \]
If there exists $x_{0} \in (0,\frac{1}{2})$ such that $\alpha_{+} (x_{0}) \notin 2 \pi \ZZ \cup (\pi + 2 \pi \ZZ )$, then $\alpha_{-} (x)$ is constant in some neighborhood  $N'$ of $x_{0}$.  Otherwise,  $\alpha_+$ is constant on $(0,\frac{1}{2})$. Since we can choose square roots such that $\sqrt{w_{j}} = 2 \cos (\frac{j \pi}{m}) \in (0,2)$, the result follows from the lemma below with  $z_j = 2 \cos (\frac{j \pi}{m})$.  The additional arguments needed in the case $m$ is even are similar.
\end{proof}
We are grateful to Sergei Ivanov for the proof of the following lemma.
\begin{lemma}
\label{l analytic independence}
Suppose $g(x) = \sum_{i =0}^\infty a_i x^i$ is a non-polynomial real analytic function, convergent to this series on a neighborhood $N$ of the origin.  If $z_1,\ldots,z_n$ are distinct positive real numbers, then the functions $1, g(z_{1} x), \dots, g(z_{n} x)$, with domains restricted to  $N'$ for some $N' \subseteq N$ having a limit point in  $N$, are linearly independent over $\RR$.
\end{lemma}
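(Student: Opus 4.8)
The plan is to run the standard identity-theorem-plus-coefficient-comparison argument, which reduces the claimed linear independence to a statement about the vectors $(z_1^i,\dots,z_n^i)$ that is easily killed once we use non-polynomiality.

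First I would suppose, toward a contradiction, that $c_0\cdot 1 + \sum_{j=1}^n c_j\, g(z_j x) = 0$ for all $x\in N'$, with not all $c_j$ zero. Taking $N$ to be an open interval about $0$ (as we may), each summand $g(z_j x) = \sum_{i\ge 0} a_i z_j^i x^i$ is real analytic on the interval $z_j^{-1}N$, so $h(x) := c_0 + \sum_{j=1}^n c_j\, g(z_j x)$ is real analytic on the open interval $N_0 := N\cap\bigcap_j z_j^{-1}N$, which contains $0$ and contains $N'$. Since $h$ vanishes on $N'$ and $N'$ has a limit point in $N_0$, the identity theorem for real analytic functions on a connected domain forces $h\equiv 0$ on $N_0$; in particular the Taylor series of $h$ at $0$ vanishes identically.

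Next I would read off coefficients: for each $i\ge 1$ the coefficient of $x^i$ in $h$ equals $a_i\big(\sum_{j=1}^n c_j z_j^i\big)$, and the constant term equals $c_0 + a_0\sum_{j=1}^n c_j$. Hence $\sum_{j=1}^n c_j z_j^i = 0$ for every $i$ in the set $I := \{\, i\ge 1 : a_i\neq 0\,\}$, and $I$ is infinite precisely because $g$ is \emph{not} a polynomial --- this is the only place that hypothesis enters. To conclude $c_1=\dots=c_n=0$ I would argue asymptotically: among the $j$ with $c_j\neq 0$, let $j^\star$ be the one with $z_{j^\star}$ maximal (unique, as the $z_j$ are distinct and positive). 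Dividing $\sum_j c_j z_j^i = 0$ by $c_{j^\star} z_{j^\star}^i$ gives $1 + \sum_{j\neq j^\star,\, c_j\neq 0} (c_j/c_{j^\star})(z_j/z_{j^\star})^i = 0$; every ratio $z_j/z_{j^\star}$ lies in $(0,1)$, so the left side tends to $1$ as $i\to\infty$ through the infinite set $I$, a contradiction. (Equivalently: picking any $n$ exponents $i_1<\dots<i_n$ in $I$, the generalized Vandermonde matrix $(z_j^{i_k})_{k,j}$ is invertible for distinct positive $z_j$, forcing $(c_j)=0$ at once.) Finally the constant-term equation gives $c_0=0$, completing the contradiction.

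I do not expect a genuine obstacle here. The only care needed is bookkeeping: checking that the $g(z_j x)$ share a common interval of analyticity $N_0$ containing both $0$ and the limit point of $N'$ (automatic when $N'$ is an open sub-neighborhood, as in the application in Proposition~\ref{p distinct coefficients}), and that the $x^i$-coefficient of $h$ is indeed $a_i\sum_j c_j z_j^i$. The one substantive point --- that non-polynomiality hands us infinitely many exponents $i$ with $\sum_j c_j z_j^i = 0$, which a distinctness/positivity argument on the $z_j$ cannot survive --- is then a one-line finish.
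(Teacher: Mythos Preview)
Your proof is correct and follows essentially the same approach as the paper's: assume a nontrivial linear relation, compare Taylor coefficients at $0$ to obtain $\sum_j c_j z_j^i = 0$ for every $i$ with $a_i \neq 0$ (infinitely many such $i$ by non-polynomiality), and derive a contradiction by letting the term with the largest $z_j$ dominate as $i \to \infty$. You are somewhat more explicit than the paper about invoking the identity theorem and about the domain bookkeeping for $N_0$, but the core argument is identical.
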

\begin{proof}
Suppose for a contradiction that $c = c_{1} g(z_{1} x) + \dots + c_{n} g(z_{n} x), \ c, c_{j} \in \RR$ with  some $c_{j}$ nonzero.  Assume without loss of generality that all of the $c_{j}$ are nonzero and that $z_1$ is the largest of the $z_j$.  Then
\[ c = \sum_{i=0}^\infty \left( \sum_{j=1}^n c_{j}  z_{j}^{i} \right) a_i x^{i} \]
holds for all $x \in N'$ implies $\sum_{j} c_{j} z_{j}^i = 0$ for all $i > 0$ such that $a_i \neq 0$.  For large $i$, the term $c_1 z_{1}^i$ dominates this sum, hence $c_1 = 0$, contradiction.
\end{proof}
\subsection{}
Now we can determine the nonstandard braid relation for $ \nsH^{(k)}_W$ in the case that $W$ is the dihedral group of order  $2m$.
Define
\be
G_{m,k}(y) =
\begin{cases}
\displaystyle \prod_{\mathbf{k'} \in \compositions{n}{\leq k}} (y - (\two^{k} a_{\mathbf{k'}})^2) & \text{if $m$ is odd}, \\
\displaystyle y \prod_{\mathbf{k'} \in \compositions{n}{\leq k}} (y - (\two^{k} a_{\mathbf{k'}})^2) \prod_{\mathbf{k'} \in \compositions{n}{< k}}(y - (\two^{k} a^1_{\mathbf{k'}})^2) & \text{if $m$ is even}.
\end{cases}
\ee

\begin{theorem}
\label{t W coprod k relations}
Let $W, m,$ and $n$ be as in Theorem \ref{t dihedral irreducibles}.  The algebra $\mathbf{A^*} \nsH^{(k)}_W$ is the associative $\mathbf{A^*}$-algebra generated by $\sP^{(k)}_s, s \in S=\{s_1,s_2\}$, with quadratic relations
\be
\label{e W nsH k quadratic relation}
(\sP^{(k)}_s)^2 = [2]^k \sP^{(k)}_s, \ \ \ s \in S,
\ee
and nonstandard braid relation
\begin{alignat}{2}
\label{e W nsH k braid relation odd}
\sP^{(k)}_1 G_{m, k} (\sP^{(k)}_{21}) &= \sP^{(k)}_2 G_{m,k}(\sP^{(k)}_{12}) & \text{\qquad if $m$ is odd,} \\
\label{e W nsH k braid relation even}
G_{m, k} (\sP^{(k)}_{21}) &= G_{m,k}(\sP^{(k)}_{12}) & \text{\qquad if $m$ is even.}
\end{alignat}
\end{theorem}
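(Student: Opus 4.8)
The plan is to mirror the proof of Theorem~\ref{t S3 coprod k relations}, working over $\mathbf{A^*}$ and its quotient field $\field$ in place of $\mathbf{A}$ and $\QQ(\u)$, with $G_{m,k}$ playing the role of $F_k$ and the classification of irreducibles in Theorem~\ref{t nsH W 2d reps} that of Theorem~\ref{t nsH S3 2D reps}. The quadratic relations~\eqref{e W nsH k quadratic relation} need no new argument, since $\frac{1}{\two^{k}}\sP^{(k)}_{s}$ is an idempotent of $\H_{W}^{\tsr k}$.

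For the braid relation, let $h$ be the difference of the two sides of~\eqref{e W nsH k braid relation odd} (if $m$ is odd) or~\eqref{e W nsH k braid relation even} (if $m$ is even). First I would show $h=0$ in $\field\nsH^{(k)}_{W}$ by checking that $h$ acts by $0$ on every irreducible in the list of Theorem~\ref{t nsH W 2d reps} and then invoking the split semisimplicity of Corollary~\ref{c nsh W semisimple}. Using the matrices~\eqref{e bar X action} one computes, as operators on $\bar{X}^{(k)}(c)$ for an arbitrary scalar $c$, that $\sP^{(k)}_{1}(\sP^{(k)}_{21}-c)=0$, that $\sP^{(k)}_{2}(\sP^{(k)}_{12}-c)=0$, and that $\sP^{(k)}_{21}(\sP^{(k)}_{21}-c)=0=\sP^{(k)}_{12}(\sP^{(k)}_{12}-c)$. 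Since the defining constant of each two-dimensional irreducible $\bar{N}^{(k)}_{\mathbf{k'}}$ (resp.\ $\bar{N}^{1(k)}_{\mathbf{k'}}$) equals $(\two^{k}a_{\mathbf{k'}})^{2}$ (resp.\ $(\two^{k}a^{1}_{\mathbf{k'}})^{2}$), which is a root of $G_{m,k}$, and since for $m$ even $G_{m,k}$ moreover has the factor $y$, it follows that $\sP^{(k)}_{1}G_{m,k}(\sP^{(k)}_{21})$ (for $m$ odd) and $G_{m,k}(\sP^{(k)}_{21})$ (for $m$ even) act by $0$ on every two-dimensional irreducible, and the same holds with $1$ and $2$ interchanged. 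On $\bar{\epsilon}_{-}$, and for $m$ even also on $\bar{\epsilon}_{1},\bar{\epsilon}_{2}$, one of $\sP^{(k)}_{1},\sP^{(k)}_{2}$ acts by $0$, so both sides of the braid relation vanish; on $\bar{\epsilon}_{+}$ both $\sP^{(k)}_{21}$ and $\sP^{(k)}_{12}$ act by $\two^{2k}$, so both sides act by the same scalar. Hence $h$ acts by $0$ on every irreducible, so $h=0$ in $\field\nsH^{(k)}_{W}$. As in the $\S_{3}$ case, $\field\nsH^{(k)}_{W}$ is the localization of $\mathbf{A^*}\nsH^{(k)}_{W}$ at the central multiplicative set $\mathbf{A^*}\setminus\{0\}$, which contains no zero divisors because $\nsH^{(k)}_{W}\subseteq\H_{W}^{\tsr k}$ lies in a free $\mathbf{A}$-module and $\mathbf{A^*}$ is $\mathbf{A}$-flat; so $\mathbf{A^*}\nsH^{(k)}_{W}\hookrightarrow\field\nsH^{(k)}_{W}$ is injective and $h=0$ in $\mathbf{A^*}\nsH^{(k)}_{W}$.

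To show there are no further relations, let $H$ be the $\mathbf{A^*}$-algebra presented by~\eqref{e W nsH k quadratic relation} and the appropriate braid relation. The quadratic relations reduce any word in the $\sP^{(k)}_{i}$ to an alternating word, and the braid relation — together with its left and right multiples by $\sP^{(k)}_{1},\sP^{(k)}_{2}$ in the $m$ even case, which one needs in order to rewrite $\sP^{(k)}_{1}(\sP^{(k)}_{21})^{D}$ and similar monomials — then shows, via a rewriting argument, that $H$ is free over $\mathbf{A^*}$ on $1$, the $(\sP^{(k)}_{12})^{j}$, $(\sP^{(k)}_{21})^{j}$, $\sP^{(k)}_{1}(\sP^{(k)}_{21})^{j-1}$, $\sP^{(k)}_{2}(\sP^{(k)}_{12})^{j-1}$ for $1\le j\le N$, together with $\sP^{(k)}_{1}(\sP^{(k)}_{21})^{N}$, when $m$ is odd, where $N=|\compositions{n}{\le k}|=\deg G_{m,k}$; and on an analogous family of $4D$ monomials when $m$ is even, where $D=1+|\compositions{n}{\le k}|+|\compositions{n}{<k}|=\deg G_{m,k}$. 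There is a surjection $H\twoheadrightarrow\mathbf{A^*}\nsH^{(k)}_{W}$ sending generators to generators; tensoring with $\field$, the source has dimension $4N+2$ (resp.\ $4D$), while the target has dimension $2+4|\compositions{n}{\le k}|=4N+2$ (resp.\ $4+4|\compositions{n}{\le k}|+4|\compositions{n}{<k}|=4D$) by Theorem~\ref{t nsH W 2d reps} and Corollary~\ref{c nsh W semisimple}, so $\field H\cong\field\nsH^{(k)}_{W}$. Since $H$ is $\mathbf{A^*}$-free and $\mathbf{A^*}\nsH^{(k)}_{W}$ is torsion-free (a submodule of a free module over the domain $\mathbf{A^*}$), the kernel of $H\twoheadrightarrow\mathbf{A^*}\nsH^{(k)}_{W}$ is torsion, hence zero, and $H\cong\mathbf{A^*}\nsH^{(k)}_{W}$.

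The main obstacle should be the $m$ even case. There the braid relation has the less familiar form $G_{m,k}(\sP^{(k)}_{21})=G_{m,k}(\sP^{(k)}_{12})$ with no leading $\sP^{(k)}_{i}$, and one must check carefully both that the extra factor $y$ in $G_{m,k}$ is exactly what is needed to make the two sides vanish on the two-dimensional irreducibles (without it, $G_{m,k}(\sP^{(k)}_{21})$ and $G_{m,k}(\sP^{(k)}_{12})$ are distinct nonzero rank-one operators there), and that the rewriting system on alternating words terminates with precisely $4D$ monomials, so that the dimension count closes. Tracking the length of each alternating word, as in the proof of Theorem~\ref{t S3 coprod k relations}, should keep this bookkeeping manageable.
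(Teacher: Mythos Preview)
Your proposal is correct and takes essentially the same approach as the paper, which simply says the proof is similar to that of Theorem~\ref{t S3 coprod k relations} and singles out exactly the point you flag: for $m$ even one must check that both sides of~\eqref{e W nsH k braid relation even} act by $0$ on $\bar{\epsilon}_{1}$ and $\bar{\epsilon}_{2}$, which holds since $\sP^{(k)}_{12}$ (and $\sP^{(k)}_{21}$) act by $0$ there and $G_{m,k}(0)=0$. Your dimension counts in both parities, and your observation that the extra factor $y$ in $G_{m,k}$ is precisely what makes $G_{m,k}(\sP^{(k)}_{21})$ vanish on the two-dimensional irreducibles in the even case, fill in details the paper leaves implicit.
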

\begin{proof}
The proof is similar to that of Theorem \ref{t S3 coprod k relations}.  In the case $m$ is even, we need to check that both sides of \eqref{e W nsH k braid relation even} act on $\nsbr{\epsilon}_{1}$ and $\nsbr{\epsilon}_{2}$ by $0$.  This is clear since $\sP^{(k)}_{12}$ acts on these representations by $0$.
\end{proof}

In fact, we can deduce a stronger statement, which does not seem to be easy to prove directly.
\begin{corollary}
The polynomial $G_{m,k} (y)$ belongs to $\mathbf{A} [y]$.  Therefore Theorem \ref{t W coprod k relations} holds with $\mathbf{A}$ in place of $\mathbf{A^*}. $
\end{corollary}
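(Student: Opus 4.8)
The plan is to realize $G_{m,k}(y)$, up to an explicit monic cofactor visibly in $\mathbf{A}[y]$, as the minimal polynomial over $F:=\QQ(\u)$ of the element $\sP^{(k)}_{12}=\sP^{(k)}_1\sP^{(k)}_2\in\nsH^{(k)}_W\subseteq\H_W^{\tsr k}$, and then to invoke that $\mathbf{A}=\ZZ[\u,\ui]$ is integrally closed in $F$. First I would compute this minimal polynomial $\mu(y)\in F[y]$, which may be done after base change to the quotient field $\field$ of $\mathbf{A^*}$. By Corollary~\ref{c nsh W semisimple} the algebra $\field\nsH^{(k)}_W\subseteq\field\H_W^{\tsr k}$ is split semisimple, so $\field\H_W^{\tsr k}$, viewed as a module over the subalgebra $\field\nsH^{(k)}_W$, is semisimple and contains the left regular module $\field\nsH^{(k)}_W$, hence contains every simple $\field\nsH^{(k)}_W$-module listed in Theorem~\ref{t nsH W 2d reps}. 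On a two-dimensional simple $\bar{X}^{(k)}(c)$ the element $\sP^{(k)}_{12}$ acts as the product of the two matrices in \eqref{e bar X action}, a matrix of trace $c$ and determinant $0$, hence with eigenvalues $c$ and $0$ and $c\neq 0$ by \eqref{e bar X irreducible}; on the one-dimensional simples it acts by the scalar $\f^k$ (on $\epsilon_+$) or $0$ (on $\epsilon_-$, and on $\bar{\epsilon}_1,\bar{\epsilon}_2$ when $m$ is even). In particular $\sP^{(k)}_{12}$ acts semisimply on $\field\H_W^{\tsr k}$, so $\mu(y)$ is the product of the factors $y-\lambda$ over the distinct eigenvalues $\lambda$; by Theorem~\ref{t nsH W 2d reps}, \eqref{e bar X irreducible}, and the distinctness established in Proposition~\ref{p distinct coefficients}, that set of eigenvalues is $\{0,\f^k\}$ together with the pairwise distinct roots of $G_{m,k}$. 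Therefore $\mu(y)=y\,(y-\f^k)\,G_{m,k}(y)$ when $m$ is odd and $\mu(y)=(y-\f^k)\,G_{m,k}(y)$ when $m$ is even.

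Since $\H_W^{\tsr k}$ is a free $\mathbf{A}$-module of finite rank and $\sP^{(k)}_{12}$ lies in it, the characteristic polynomial $\chi(y)$ of left multiplication by $\sP^{(k)}_{12}$ on $\H_W^{\tsr k}$ is a monic polynomial of $\mathbf{A}[y]$ satisfied by $\sP^{(k)}_{12}$, whence $\mu(y)\mid\chi(y)$ in $F[y]$. As $\mathbf{A}$ is a unique factorization domain and so integrally closed in $F$, a monic divisor in $F[y]$ of a monic polynomial of $\mathbf{A}[y]$ lies in $\mathbf{A}[y]$; thus $\mu(y)\in\mathbf{A}[y]$. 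In $\mathbf{A}[y]$ the polynomials $y$ and $y-\f^k$ are coprime primes, and each divides $\mu(y)$ (it does so in $\mathbf{A^*}[y]$, while the values $\mu(0)$ and $\mu(\f^k)$ lie in $\mathbf{A}$ and hence vanish), so dividing $\mu(y)$ by $y(y-\f^k)$ when $m$ is odd, resp.\ by $y-\f^k$ when $m$ is even, yields $G_{m,k}(y)\in\mathbf{A}[y]$.

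For the final assertion, now that $G_{m,k}\in\mathbf{A}[y]$ the relations \eqref{e W nsH k quadratic relation} and \eqref{e W nsH k braid relation odd}--\eqref{e W nsH k braid relation even} are defined over $\mathbf{A}$; they hold in $\nsH^{(k)}_W$ because they hold in $\mathbf{A^*}\nsH^{(k)}_W$ by Theorem~\ref{t W coprod k relations} and $\nsH^{(k)}_W\hookrightarrow\mathbf{A^*}\nsH^{(k)}_W$, $\mathbf{A^*}$ being free over $\mathbf{A}$; and there are no further relations, by the argument of Theorem~\ref{t S3 coprod k relations} --- the explicit monomials span the abstract $\mathbf{A}$-algebra on these generators and relations, and their images in $\H_W^{\tsr k}$ are $\mathbf{A}$-linearly independent because they are $F$-linearly independent by the dimension count coming from split semisimplicity --- so $\nsH^{(k)}_W$ is this algebra and is $\mathbf{A}$-free. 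The only genuinely new ingredient is the identification of $G_{m,k}(y)$ inside the minimal polynomial of $\sP^{(k)}_{12}$; I expect the one point requiring care to be keeping the odd and even cases straight when reading off $\mu(y)$ and dividing it out.
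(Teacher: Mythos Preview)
Your proof is correct and takes a genuinely different route from the paper's. The paper argues via a Gr\"obner basis comparison: in the $\mathbf{A}$-algebra $F$ generated by $\sP^{(k)}_1,\sP^{(k)}_2$ modulo the quadratic relations, with kernel $I$ of the surjection onto $\nsH^{(k)}_W$, it uses that $h$ alone generates $\field^* I$ (where $\field^*$ is the fraction field of $\mathbf{A^*}$), compares leading monomials with a Gr\"obner basis of $\field I$ (where $\field=\QQ(\u)$), and concludes $h\in\field F$. You instead realize $G_{m,k}(y)$, times the visible monic cofactor $y(y-\f^k)$ or $(y-\f^k)$, as the minimal polynomial of $\sP^{(k)}_{12}$ acting on the free $\mathbf{A}$-module $\H_W^{\tsr k}$, and then invoke integral closedness of the UFD $\mathbf{A}$ in $\QQ(\u)$: a monic divisor in $\QQ(\u)[y]$ of the monic characteristic polynomial in $\mathbf{A}[y]$ already lies in $\mathbf{A}[y]$. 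Your argument is in fact sharper for the stated conclusion: the paper's Gr\"obner step, as written, lands only in $\QQ(\u)[y]$, and an integrality argument like yours is what one needs to descend all the way to $\mathbf{A}[y]=\ZZ[\u,\ui][y]$.

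One small point in your final paragraph: to show the braid relation holds in $\nsH^{(k)}_W$ you appeal to ``$\nsH^{(k)}_W\hookrightarrow\mathbf{A^*}\nsH^{(k)}_W$, $\mathbf{A^*}$ being free over $\mathbf{A}$.'' Freeness of $\mathbf{A^*}$ over $\mathbf{A}$ is not obvious from its definition as a subring of a field generated by chosen square roots. A cleaner fix, in the spirit of the paper's $\S_3$ argument, is: now that $G_{m,k}\in\mathbf{A}[y]$, the element $h$ lives in $\nsH^{(k)}_W\subseteq\H_W^{\tsr k}$; its image in $\field^*\H_W^{\tsr k}$ vanishes (it acts by zero on every simple of the split semisimple algebra $\field^*\nsH^{(k)}_W$), and $\H_W^{\tsr k}\to\field^*\H_W^{\tsr k}$ is injective since $\H_W^{\tsr k}$ is $\mathbf{A}$-free and $\mathbf{A}\hookrightarrow\field^*$.
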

\begin{proof}
Let $\field $ (respectively  $\field^*$) be the field of fractions of  $\mathbf{A}$ (respectively  $\mathbf{A^*}$).
Let $F$ be the  $\mathbf{A}$-algebra generated by $\sP^{(k)}_s, s \in S,$ with quadratic relations \eqref{e W nsH k quadratic relation}. The ideal of relations $I$ is defined by the exact sequence
\be \label{e exact sequence presentation}
0 \to I \to F \to \nsH^{(k)}_W \to 0.
\ee
Since localizations are flat and free modules are flat, the sequence remains exact after tensoring with  $\field$ and  $\field^*$. Thus Theorem \ref{t W coprod k relations} says that
\[h:=\sP^{(k)}_1 G_{m,k}(\sP^{(k)}_{21}) - \sP^{(k)}_2 G_{m,k}(\sP^{(k)}_{12}) \in \field^* F\]
generates  $\field^* I$ if  $m$ is odd (the  $m$ even case is similar).


Now choose a graded lexicographic term order on monomials in $F$ and
let  $\{g_1,g_2,\ldots\}$ be a Groebner basis for  $\field I$.
There exists an  $i$ such that $g_i$ and $h$ have the same leading monomial.  We must then have $g_i = c h$,  $c \in \field^*$, because if not we could cancel the leading terms of $g_i$ and $h$, contradicting that  $\{h\}$ is a Groebner basis of  $\field^* I$. Since $g_i \in \field F, $ and the leading coefficient of $h$ is  $1$, we must have $c \in \field$.  It follows that  $h \in \field F$ and $\field I= (h)$.  The desired conclusion $I = (h)$ then follows by repeating the argument from the proof of Theorem \ref{t S3 coprod k relations}.
\end{proof}

We may also conclude, as in the $W=S_{3}$ case,
\begin{corollary}
The algebra $\nsH^{(k)}_{W}$ is free as an $\mathbf{A}$-module.
\end{corollary}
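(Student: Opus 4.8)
The plan is to mimic the proof of Theorem~\ref{t S3 coprod k relations}, with $\QQ(\u)$ replaced by $\field$, the quotient field of $\mathbf{A^*}$. By the preceding corollary, $\nsH^{(k)}_W$ is generated over $\mathbf{A}$ by $\sP^{(k)}_1,\sP^{(k)}_2$ subject to the quadratic relations \eqref{e W nsH k quadratic relation} and the nonstandard braid relation \eqref{e W nsH k braid relation odd} (resp.\ \eqref{e W nsH k braid relation even}) when $m$ is odd (resp.\ even). Hence it suffices to exhibit an $\mathbf{A}$-spanning set $B$ of $\nsH^{(k)}_W$ with $|B|$ equal to $D:=\dim_\field\field\nsH^{(k)}_W$: for then $B$ spans $\field\nsH^{(k)}_W = \field\tsr_{\mathbf{A}}\nsH^{(k)}_W$ over $\field$ and, having exactly $D=\dim_\field\field\nsH^{(k)}_W$ elements, is a $\field$-basis of it; since $\mathbf{A}$ embeds in $\field$, any $\mathbf{A}$-linear dependence among the elements of $B$ in $\nsH^{(k)}_W$ pushes forward to a $\field$-linear dependence in $\field\nsH^{(k)}_W$ and is therefore trivial, making $B$ an $\mathbf{A}$-basis and $\nsH^{(k)}_W$ a free $\mathbf{A}$-module. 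From Corollary~\ref{c nsh W semisimple} and the irreducibles listed in Theorem~\ref{t nsH W 2d reps} one reads off $D = 2 + 4|\compositions{n}{\leq k}|$ if $m$ is odd and $D = 4 + 4\big(|\compositions{n}{\leq k}| + |\compositions{n}{< k}|\big)$ if $m$ is even.

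To build $B$, fix a graded lexicographic term order on the words in $\sP^{(k)}_1,\sP^{(k)}_2$, with $\sP^{(k)}_2 > \sP^{(k)}_1$ when $m$ is odd and $\sP^{(k)}_1 > \sP^{(k)}_2$ when $m$ is even, and put $d = \deg G_{m,k}$, so $d = |\compositions{n}{\leq k}|$ when $m$ is odd and $d = 1 + |\compositions{n}{\leq k}| + |\compositions{n}{< k}|$ when $m$ is even. Viewed as rewriting rules, the quadratic relations delete squares $\sP^{(k)}_s\sP^{(k)}_s$, while the nonstandard braid relation rewrites its leading word --- the length-$(2d+1)$ alternating word $\sP^{(k)}_2(\sP^{(k)}_{12})^{d}$ when $m$ is odd, the length-$2d$ alternating word $(\sP^{(k)}_{12})^{d}$ when $m$ is even --- as an $\mathbf{A}$-combination of smaller words, the unique one of maximal length being $\sP^{(k)}_1(\sP^{(k)}_{21})^{d}$ (resp.\ $(\sP^{(k)}_{21})^{d}$). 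Every rewriting step strictly lowers the leading monomial in this graded order, so the procedure terminates, expressing each element of $\nsH^{(k)}_W$ as an $\mathbf{A}$-combination of the \emph{reduced} monomials: those alternating monomials having no factor $\sP^{(k)}_s\sP^{(k)}_s$ and no factor equal to the leading word of the braid relation. A direct count of the reduced monomials gives, for $m$ odd, the element $1$, the two alternating monomials of each length $1,\dots,2d$, and $\sP^{(k)}_1(\sP^{(k)}_{21})^{d}$ --- a total of $1 + 4d + 1 = D$ --- and, for $m$ even, the element $1$, the two alternating monomials of each length $1,\dots,2d-1$, and $(\sP^{(k)}_{21})^{d}$ --- a total of $1 + (4d-2) + 1 = D$. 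Take $B$ to be this set.

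The routine but delicate step is the final count. Termination of the rewriting is automatic from the graded term order, and that the reduced monomials span $\nsH^{(k)}_W$ follows from the preceding corollary; the real content is that there are exactly $D$ of them. This rests on the identities $\deg G_{m,k} = |\compositions{n}{\leq k}|$ (resp.\ $1 + |\compositions{n}{\leq k}| + |\compositions{n}{< k}|$), immediate from the definition of $G_{m,k}$, together with the elementary fact that there is one alternating monomial of length $0$ and exactly two of each positive length; it is precisely here that the signed-composition combinatorics is forced to match the dimension count coming from Theorem~\ref{t nsH W 2d reps}. Specializing to $W = \S_3$ recovers the $4k+2$ monomials displayed in the proof of Theorem~\ref{t S3 coprod k relations}.
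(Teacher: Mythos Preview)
Your proof is correct and follows essentially the same route as the paper. The paper's argument (for $W=\S_3$, which is then invoked verbatim for general dihedral $W$) exhibits the same monomial spanning set, matches its cardinality against $\dim_\field \field\nsH^{(k)}_W$ via split semisimplicity, and concludes freeness; you have simply written out the rewriting and counting details for the general dihedral case explicitly.
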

\section{A cellular basis for $\nsH^{(k)}_W$}
\label{s cellular basis}
Graham and Lehrer's theory of cellular algebras \cite{GrahamLehrer} formalizes the notion of an algebra with a basis well-suited for studying representations of the algebra.  The theory is modeled after the Kazhdan-Lusztig basis of $\H_r$ in which a basis element $\C_{w}$ is naturally labeled by the insertion and recording tableaux of $w$.  We briefly introduce this theory (following some of the conventions in \cite{Mathas}) and show that $R \nsH^{(k)}_W$ (for $R$ a suitable localization of $\mathbf{A}^{*}$) is a cellular algebra with a cellular basis generalizing the Kazhdan-Lusztig basis of  $\H_3$ and the basis of $\nsH_3$ given in \cite{GCT4}.
\subsection{}
\label{ss cellular basis nsH k}
Let $H$ be an algebra over a commutative ring $R$.
\begin{definition}
\label{d cellular algebra}
Suppose that $(\Lambda, \geq)$ is a (finite) poset and that for each $\lambda \in \Lambda$ there is a finite indexing set $\Tab(\lambda)$ and distinct elements $C^\lambda_{ST} \in H$ for all $S,T \in \Tab(\lambda)$ such that
\[ \Cbasis = \{ C^\lambda_{S T} : \lambda \in \Lambda \text{ and } S,T \in \Tab(\lambda) \} \]
is a (free)  $R$-basis of $H$. For each $\Lambda' \subseteq \Lambda$ let $H_{\Lambda'}$ be the $R$-submodule of $H$ with basis $\{ C^\mu_{S T} : \mu \in \Lambda' \text{ and } S,T \in \Tab(\mu)\}$; write $H_{\lambda}, H_{< \lambda}$ in place of $H_{\{ \lambda \}}$,  $H_{\{ \mu \in \Lambda : \mu < \lambda \}}$.

The triple $(\Cbasis, \Lambda, \Tab)$ is a \emph{cellular basis} of $H$ if
\begin{list}{(\roman{ctr})} {\usecounter{ctr} \setlength{\itemsep}{1pt} \setlength{\topsep}{2pt}}
\item the $R$-linear map $* : H \to H$ determined by $(C^{\lambda}_{S T})^* = C^\lambda_{T S}$, for all $\lambda \in \Lambda$ and all $S$ and $T$ in $\Tab(\lambda)$, is an algebra anti-isomorphism of $H$,
\item for any $\lambda \in \Lambda$ and $h \in H$ there exist $r_{S',S} \in R$, for  $S',S \in \Tab(\lambda)$, such that for all $T \in \Tab(\lambda)$
\[ h C^\lambda_{S T} \equiv \sum_{S' \in \Tab(\lambda)} r_{S', S} C^\lambda_{S' T} \mod H_{< \lambda}. \]
If $H$ has a cellular basis then we say that $H$ is a \emph{cellular algebra}.
\end{list}
\end{definition}
The cellular basis for $R \nsH^{(k)}_W$ ($R$ to be specified) is similar to the ``banal'' example of \cite{GrahamLehrer}, which we now recall.
\begin{example}
\label{ex banal example}
For each element $\lambda \in \Lambda$, we are given an element $\sigma_{\lambda}$ of $R$.  Let $H = R [y] / g (y)$ where $g(y) = \prod_{\lambda \in \Lambda} (y - \sigma_{\lambda})$.  Choose a partial order  $<$ on $\Lambda$ such that for each incomparable pair $\mu , \lambda \in \Lambda$, the element $\sigma_{\mu} - \sigma_{\lambda} \in R$ is invertible (for example, $<$ can be a total order and the $\sigma_\lambda$ can be any elements of any commutative ring $R$).  For $\lambda \in \Lambda$, let $\Tab (\lambda) = \{ \lambda \}$ and set
\[ C_{\lambda, \lambda}^{\lambda} = C^{\lambda} = \prod_{\mu \nleq \lambda} ( y - \sigma_{\mu} ). \]

The triple $(\{C^{\lambda}\}_{\lambda \in \Lambda}, \Lambda, \Tab )$ is a cellular basis of $H$: one checks that $\{C^{\lambda} \}_{\lambda \in \Lambda}$ is an $R$-basis by evaluating a linear relation $\sum_{\lambda \in \Lambda} a_{\lambda} C^{\lambda} = 0$ at $y=\sigma_{\mu}$ for $\mu$ a maximal element of $\Lambda$; one concludes that $a_{\mu} = 0$ and shows by induction that the other $a_{\lambda}$'s are $0$.  Similar considerations show that for any $\lambda \in \Lambda$, $\{ C^{\mu} : \mu < \lambda \}$ is an $R$-basis of the ideal of $H$ generated by $\prod_{\mu \not < \lambda} (y - \sigma_{\mu})$.  We conclude that for $h(y) \in R[y]$, there holds $h(y) C^{\lambda} \equiv h(\sigma_{\lambda}) C^{\lambda} \mod H_{< \lambda}$.
\end{example}

The data for the cellular basis of $R \nsH^{(k)}_{W}$ is as follows:
\begin{itemize}
\item $\Lambda = \Lambda_{1} \cup \Lambda_{2}$, where
\item $\Lambda_{1}=
\begin{cases} \{ \nsbr{\epsilon}_{+}, \nsbr{\epsilon}_{-} \} & \text{if $m$ is odd,} \\
\{ \nsbr{\epsilon}_{+}, \nsbr{\epsilon}_{-}, \nsbr{\epsilon}_{1}, \nsbr{\epsilon}_{2} \} & \text{if $m$ is even,}
\end{cases}$
\item $\Lambda_{2} =
\begin{cases} \compositions{n}{\leq k} & \text{if $m$ is odd,} \\
\compositions{n}{\leq k} \sqcup \compositions{n}{<k}^1 & \text{if $m$ is even,}
\end{cases}$
\item $\Tab(\alpha) = \{ \alpha \}, \alpha \in \Lambda_{1}$,
\item $\Tab(\mathbf{k}) = \{1,2\}, \mathbf{k} \in \Lambda_{2}$.
\end{itemize}
The set $\compositions{n}{<k}^1$ is equal to $\compositions{n}{<k}$ and is decorated with a superscript $1$  to distinguish its elements from those of $\compositions{n}{\leq k}$. It is convenient to define $\sigma_{\mathbf{k}}$ for all $\mathbf{k} \in \Lambda_{2}$ by
\[ \begin{array}{rcll}
\sigma_{\mathbf{k}} &=& \two^{k} a_{\mathbf{k}}, & \mathbf{k} \in \compositions{n}{\leq k},  \\
\sigma_{\mathbf{k}} &=& \two^{k} a^{1}_{\mathbf{k}}, & \mathbf{k} \in \compositions{n}{< k}^1 \text{ and $m$ is even,}
\end{array} \]
where $a_{\mathbf{k}}, a^{1}_{\mathbf{k}}$ are as in Definition \ref{d constants ak}.

Choose a partial order on  $\Lambda_{2}$ such that for each incomparable pair $\mathbf{k},\mathbf{k'} \in \Lambda_{2}$, the constant $\sigma_{\mathbf{k}}^{2} - \sigma_{\mathbf{k'}}^{2}$ is invertible in $R$.  Then let the poset on $\Lambda$ be that of $\Lambda_{2}$ with the elements of $\Lambda_{1}$ added as in the following diagrams.
\be
\label{e lambda poset definition}
\xymatrix@R=3pt@C=8pt{
\nsbr{\epsilon}_{-} \ar @{-} [dd] & & & & & \nsbr{\epsilon}_{-} \ar @{-} [dd] & \\
& & & & & & &\\
\Lambda_{2} \ar @{-} [dd] & & & & & \Lambda_{2} \ar @{-} [dl] \ar @{-} [dr] & \\
& & & & \nsbr{\epsilon}_{1} \ar @{-} [dr] & & \nsbr{\epsilon}_{2} \ar @{-} [dl] \\
\nsbr{\epsilon}_{+} & & & & & \nsbr{\epsilon}_{+} & \\
m \text{ odd} & & & & & m \text{ even}}
\ee


The cellular basis $\Cbasis$ of $R \nsH^{(k)}_{W}$ consists of, if $m$ is odd,
\be
\label{e canonical basis definition m odd}
\begin{array}{l@{\ \ := \ } r}
C^{\nsbr{\epsilon}_-} & \quad 1, \\
C^\mathbf{k}_{1 1} & \sigma_{\mathbf{k}} \sP^{(k)}_1 \prod_{\mathbf{k'} \not \leq \mathbf{k}} (\sP^{(k)}_{21} - \sigma_{\mathbf{k'}}^{2}), \\
C^\mathbf{k}_{2 1} & \sP^{(k)}_{21} \prod_{\mathbf{k'} \not \leq \mathbf{k}} (\sP^{(k)}_{21} - \sigma_{\mathbf{k'}}^{2}), \\
C^\mathbf{k}_{2 2} & \sigma_{\mathbf{k}} \sP^{(k)}_2 \prod_{\mathbf{k'} \not \leq \mathbf{k}} (\sP^{(k)}_{12} - \sigma_{\mathbf{k'}}^{2}), \\
C^\mathbf{k}_{1 2} & \sP^{(k)}_{12} \prod_{\mathbf{k'} \not \leq \mathbf{k}} (\sP^{(k)}_{12} - \sigma_{\mathbf{k'}}^{2}), \\
C^{\nsbr{\epsilon}_+} & \sP^{(k)}_1 \prod_{\mathbf{k'} \not \leq \nsbr{\epsilon}_+} (\sP^{(k)}_{21} - \sigma_{\mathbf{k'}}^{2}),
\end{array}
\ee
for all $\mathbf{k} \in \Lambda_{2}$ and the products are over all $\mathbf{k}' \in \Lambda_{2}$ satisfying the stated conditions.

Note that the quantity defining $C^{\nsbr{\epsilon}_+}$ above is exactly $\sP^{(k)}_1 G_{m, k}(\sP^{(k)}_{21})$, equal to $ \sP^{(k)}_2 G_{m, k}(\sP^{(k)}_{12})$ by the nonstandard braid relation \eqref{e W nsH k braid relation odd}.

If $m$ is even, then the cellular basis $\Cbasis$ consists of
\be
\label{e canonical basis definition even}
\begin{array}{l@{\ \ := \ } r}
C^{\nsbr{\epsilon}_-} & 1, \\
C^\mathbf{k}_{1 1} & \sigma_{\mathbf{k}} \sP^{(k)}_1 \prod_{\mathbf{k'} \not \leq \mathbf{k}} (\sP^{(k)}_{21} - \sigma_{\mathbf{k'}}^{2}), \\
C^\mathbf{k}_{2 1} & \sP^{(k)}_{21} \prod_{\mathbf{k'} \not \leq \mathbf{k}} (\sP^{(k)}_{21} - \sigma_{\mathbf{k'}}^{2}), \\
C^\mathbf{k}_{2 2} & \sigma_{\mathbf{k}} \sP^{(k)}_2 \prod_{\mathbf{k'} \not \leq \mathbf{k}} (\sP^{(k)}_{12} - \sigma_{\mathbf{k'}}^{2}), \\
C^\mathbf{k}_{1 2} & \sP^{(k)}_{12} \prod_{\mathbf{k'} \not \leq \mathbf{k}} (\sP^{(k)}_{12} - \sigma_{\mathbf{k'}}^{2}), \\
C^{\nsbr{\epsilon}_1} & \sP^{(k)}_1 \prod_{\mathbf{k'} \not \leq \nsbr{\epsilon}_1} (\sP^{(k)}_{21} - \sigma_{\mathbf{k'}}^{2}), \\
C^{\nsbr{\epsilon}_2} & \sP^{(k)}_{2} \prod_{\mathbf{k'} \not \leq \nsbr{\epsilon}_2} (\sP^{(k)}_{12} - \sigma_{\mathbf{k'}}^{2}), \\
C^{\nsbr{\epsilon}_+} & \sP^{(k)}_{12} \prod_{\mathbf{k'} \not \leq \nsbr{\epsilon}_+} (\sP^{(k)}_{12} - \sigma_{\mathbf{k'}}^{2}),
\end{array}
\ee
where the products are over all $\mathbf{k'} \in \Lambda_{2}$ satisfying the stated conditions.  The quantity defining $C^{\nsbr{\epsilon}_+}$ above is exactly $G_{m,k}(\sP^{(k)}_{12})$, equal to $G_{m,k}(\sP^{(k)}_{21})$ by the nonstandard braid relation \eqref{e W nsH k braid relation even}.

\begin{proposition}
\label{p cellular basis nsH k}
Let $\Lambda, \Tab, \Cbasis$ and $\sigma_{\mathbf{k}}$ be as above and let $R$ be a ring containing $\mathbf{A^{*}}$ such that $\sigma_{\mathbf{k}}$ is invertible in $R$ for all $\mathbf{k} \in \Lambda_{2}$.  The algebra $H = R \nsH^{(k)}_W$ is a cellular algebra with cellular basis $( \Cbasis, \hat{\Lambda}, \Tab )$.  The anti-automorphism $*$ of Definition \ref{d cellular algebra} (i) is equal to $1^{\text{op}}$ of \textsection\ref{ss theta on nsH k reps}.
\end{proposition}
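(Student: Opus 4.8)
The plan is to imitate, in the present ``two-column'' setting, the verification of the banal example (Example~\ref{ex banal example}). Write $\sP_i = \sP^{(k)}_i$, so that by Theorem~\ref{t W coprod k relations} (valid already over $\mathbf{A}$) $H = R\nsH^{(k)}_W$ is generated by $\sP_1,\sP_2$ with the quadratic relations $\sP_s^2 = [2]^k\sP_s$ and the nonstandard braid relation. The quadratic relations give the elementary identities $\sP_1\sP_{21}^j = \sP_{12}^j\sP_1$, $\sP_2\sP_{12}^j = \sP_{21}^j\sP_2$, $\sP_2\sP_{21} = [2]^k\sP_{21}$, $\sP_1\sP_{12} = [2]^k\sP_{12}$, hence $\sP_1 f(\sP_{21}) = f(\sP_{12})\sP_1$ and $\sP_2 f(\sP_{12}) = f(\sP_{21})\sP_2$ for every polynomial $f$; these, the quadratic relations, and the braid relation are essentially the only computations needed. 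Since $\sigma_{\mathbf{k}} = [2]^k a_{\mathbf{k}}$ is a unit of $R$, so is $[2]$; and the $\sigma_{\mathbf{k}}^2$ are pairwise distinct and avoid $0$ and $[2]^{2k}$, because by Theorem~\ref{t nsH W 2d reps} and Proposition~\ref{p distinct coefficients} the modules $\bar{N}^{(k)}_{\mathbf{k}}$ (and $\bar{N}^{1(k)}_{\mathbf{k}}$) are distinct absolutely irreducibles. By base change it suffices to argue for $R = \mathbf{A^*}[\sigma_{\mathbf{k}}^{-1}:\mathbf{k}\in\Lambda_2]$, a domain with fraction field $\field$ over which $\field\nsH^{(k)}_W$ is the split semisimple algebra of Corollary~\ref{c nsh W semisimple}.

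Next I would show $\Cbasis$ is a free $R$-basis. Its cardinality is $|\Lambda_1| + 4|\Lambda_2| = \dim_{\field}\field\nsH^{(k)}_W$ by Theorem~\ref{t nsH W 2d reps}, which equals the $\mathbf{A}$-rank of $\nsH^{(k)}_W$ (the freeness corollary at the end of \textsection\ref{s Generalizations to dihedral groups}), so $H$ is free over $R$ of this rank. In $\field\nsH^{(k)}_W\cong\prod_\mu\End(V_\mu)$ a direct computation with \eqref{e bar X action} shows $\sP_1 f(\sP_{21})$ and $\sP_{21}f(\sP_{21})$ act on the two-dimensional $\bar{N}^{(k)}_{\mathbf{j}}$ (on which $\sP_{21}$ has eigenvalues $0$ and $\sigma_{\mathbf{j}}^2$) as $f(\sigma_{\mathbf{j}}^2)\sP_1$ and $f(\sigma_{\mathbf{j}}^2)\sP_{21}$; from this, and from the braid relation used to treat $C^{\bar\epsilon_+}$ (and $C^{\bar\epsilon_1},C^{\bar\epsilon_2}$), one reads off that $C^\lambda_{ST}$ acts as $0$ on $V_\mu$ unless $\mu\le\lambda$, that for $\lambda\in\Lambda_2$ the four $C^\lambda_{ST}$ act on $V_\lambda$ as an $R$-basis of $\End(V_\lambda)$, and that for $\lambda\in\Lambda_1$ the single $C^\lambda$ acts nonzero on $V_\lambda$. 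The maximal-element argument of Example~\ref{ex banal example}, carried out block by block, then gives $\field$-linear independence of $\Cbasis$. Finally, comparing $\Cbasis$ with the monomial $\mathbf{A^*}$-basis of $\nsH^{(k)}_W$ from the proof of Theorem~\ref{t W coprod k relations}, the transition matrix is block-triangular by degree, each diagonal block of Vandermonde type with determinant equal, up to sign and factors $\sigma_{\mathbf{k}}$, to a product of differences $\sigma_{\mathbf{k}}^2 - \sigma_{\mathbf{k}'}^2$ over incomparable pairs --- all units of $R$ --- so $\Cbasis$ spans $H$ over $R$ and is a basis.

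For axiom~(i): $1^{\text{op}}$ fixes each $\C_s$ and $C_s$, and $\sP^{(k)}_s$ is a symmetric expression in these, so the $k$-fold tensor power $1^{\text{op}}$ fixes each $\sP^{(k)}_s$; as it stabilizes $\nsH^{(k)}_W$ (Proposition~\ref{p theta on nsH k}) it is an anti-automorphism of $H$. Using $1^{\text{op}}(\sP_{21}) = \sP_{12}$ and the commutation identities above (and the braid relation $G_{m,k}(\sP_{21}) = G_{m,k}(\sP_{12})$ for $C^{\bar\epsilon_+}$ when $m$ is even and for $C^{\bar\epsilon_1},C^{\bar\epsilon_2}$) one checks directly $1^{\text{op}}(C^\lambda_{ST}) = C^\lambda_{TS}$ for every basis element, so the $*$ of Definition~\ref{d cellular algebra}(i) equals $1^{\text{op}}$ and is an algebra anti-automorphism. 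For axiom~(ii) it is enough to take $h\in\{\sP_1,\sP_2\}$. For $\lambda = \mathbf{k}\in\Lambda_2$, writing $p_{\mathbf{k}}(y) = \prod_{\mathbf{k}'\not\leq\mathbf{k}}(y - \sigma_{\mathbf{k}'}^2)$, the quadratic relations and commutation identities give $\sP_1 C^{\mathbf{k}}_{11} = [2]^k C^{\mathbf{k}}_{11}$, $\sP_2 C^{\mathbf{k}}_{11} = \sigma_{\mathbf{k}} C^{\mathbf{k}}_{21}$, $\sP_2 C^{\mathbf{k}}_{21} = [2]^k C^{\mathbf{k}}_{21}$ (and the mirror statements for $C^{\mathbf{k}}_{22},C^{\mathbf{k}}_{12}$), so the only point is that $\sP_1 C^{\mathbf{k}}_{21} - \sigma_{\mathbf{k}} C^{\mathbf{k}}_{11} = \sP_1\prod_{\mathbf{k}'\not<\mathbf{k}}(\sP_{21} - \sigma_{\mathbf{k}'}^2)$ lies in $H_{<\mathbf{k}}$; this holds because $\prod_{\mathbf{k}'\not<\mathbf{k}}(y-\sigma_{\mathbf{k}'}^2)$ divides the polynomials defining the type-$(1,1)$ basis elements $C^{\mathbf{k}'}_{11}$ ($\mathbf{k}'<\mathbf{k}$) and $C^{\bar\epsilon_+}$ (resp. $C^{\bar\epsilon_1}$), which span the type-$(1,1)$ part of $H_{<\mathbf{k}}$ --- again the banal argument, now inside the column $R[\sP_{21}]\sP_1$. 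For $\lambda\in\Lambda_1$, $\Tab(\lambda)$ is a singleton and one checks $\sP_i C^\lambda \equiv [2]^k C^\lambda$ (or $\equiv 0$, for $\bar\epsilon_-$) modulo $H_{<\lambda}$ directly, the braid relation being used to rewrite $C^{\bar\epsilon_+}$ as $\sP_2 G_{m,k}(\sP_{12})$ (resp. $G_{m,k}(\sP_{21})$) so that $\sP_2 C^{\bar\epsilon_+} = [2]^k C^{\bar\epsilon_+}$. Passing these $\field$-congruences back to $R$ is automatic since $\Cbasis$ is an $R$-basis and $H/H_{<\lambda}$ is $R$-torsion-free.

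I expect the main obstacle to be the bookkeeping around $H_{<\lambda}$ in the verification of axiom~(ii): identifying it precisely (including the subtlety that for $m$ odd $C^{\bar\epsilon_+}$ is simultaneously the ``top'' element of the $(1,1)$- and the $(2,2)$-column, via the braid relation) and confirming the divisibilities, together with the $R$-spanning step, where one must check that the relevant Vandermonde-type determinants are genuinely units of $R$. Both are exactly the places where the hypothesis that the $\sigma_{\mathbf{k}}$ are invertible in $R$ --- equivalently, that $[2]$ is invertible and the $\sigma_{\mathbf{k}}^2$ are distinct and unequal to $[2]^{2k}$ --- is indispensable.
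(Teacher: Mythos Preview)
Your approach is essentially the paper's: both reduce the basis assertion to the banal example (Example~\ref{ex banal example}) applied column by column, and both verify axiom~(ii) by the identical computation
\[
\sP^{(k)}_1 C^{\mathbf{k}}_{21}-\sigma_{\mathbf{k}}C^{\mathbf{k}}_{11}
=\sP^{(k)}_1\!\!\prod_{\mathbf{k}'\not<\mathbf{k}}\!\!\bigl(\sP^{(k)}_{21}-\sigma_{\mathbf{k}'}^2\bigr)\in H_{<\mathbf{k}}.
\]
The paper phrases the column decomposition as passing to the $\bar\epsilon_+\times\bar\epsilon_+$-isotypic component of $H$ for the bimodule restriction to $H_{\{s_i\}}\times H^{\text{op}}_{\{s_j\}}$, which has monomial basis $\{\sP^{(k)}_i(\sP^{(k)}_{ji})^l\}$, and then invokes Example~\ref{ex banal example} verbatim in each of the four columns; this also identifies $H_{<\lambda}$ with the relevant ideal directly.

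Two comments on your write-up. First, the detour through $\field$-linear independence via the semisimple decomposition is unnecessary: once the banal argument gives $R$-spanning in each column, the rank count (which you already have) finishes the basis claim. Second, your assertion that the transition matrix from the monomial basis to $\Cbasis$ is ``block-triangular by degree'' is not correct when the partial order on $\Lambda_2$ is not total. For instance, if $\mathbf{k}_1,\mathbf{k}_2$ are incomparable and both lie below a unique maximal $\mathbf{k}_3$, then the polynomials $\prod_{\mathbf{k}'\nleq\mathbf{k}}(y-\sigma_{\mathbf{k}'}^2)$ have degrees $0,2,2$ and there is no degree-$1$ element, so no block structure by degree exists. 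The determinant is still a unit (in that example it is $\sigma_{\mathbf{k}_1}\sigma_{\mathbf{k}_2}\sigma_{\mathbf{k}_3}(\sigma_{\mathbf{k}_1}^2-\sigma_{\mathbf{k}_2}^2)$), but the reason is precisely the inductive evaluation-at-a-maximal-element argument of Example~\ref{ex banal example}, not a triangular matrix shape. Citing the banal example directly, as the paper does, is both shorter and avoids this slip.
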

\begin{proof}
We must show that $\Cbasis$ is an $R$-basis of $H$.  As an $(H, H)$-bimodule, the $(\nsbr{\epsilon}_{+}, \nsbr{\epsilon}_{+})$-isotypic component of the restriction of $H$ to a $(H_{\{ s_{1} \}}, H_{\{ s_{1} \}})$-bimodule has an $R$-basis consisting of monomials of the form $ \sP^{(k)}_{1} ( \sP^{(k)}_{21} )^{j}$.
The set $\{ C^{\mathbf{k}}_{11} : \mathbf{k} \in \Lambda_{2} \} \cup \{ \nsbr{\epsilon}_{+} \}$ (unioned with $\{ \nsbr{\epsilon}_{1} \}$ if $m$ is even) is also an $R$-basis for this space by the same argument as in Example \ref{ex banal example}.
Similar considerations show that for any $\lambda \in \Lambda$ and $i, j \in \{1,2\}$, the set
\[ \{ C^{\mathbf{k'}}_{i j} : \mathbf{k'} < \lambda \} \cup
\begin{cases} \{ \nsbr{\epsilon}_{+}, \nsbr{\epsilon}_{i} \} & \text{if $m$ is even and $i=j$, } \\
\{ \nsbr{\epsilon}_{+} \} & \text{otherwise,}   \end{cases} \]
is an $R$-basis for the $(\nsbr{\epsilon}_{+} , \nsbr{\epsilon}_{+})$-isotypic component of the $(H_{\{ s_{i} \}} , H_{\{ s_{j} \}})$-restriction of $I_{\lambda}$, where $I_{\lambda}$ is the two-sided ideal of $H$ generated by $\prod_{\mathbf{k'} \not < \lambda} (\sP^{(k)}_{12} - \sigma_{\mathbf{k'}}^{2})$.  That $\Cbasis$ is an $R$-basis of $H$ follows by applying this to $\lambda = \nsbr{\epsilon}_{-}$.  We may also deduce that
\begin{multline}
\sP^{(k)}_1 C^{\mathbf{k}}_{21} = \sP^{(k)}_{121} \prod_{\mathbf{k'} \not \leq \mathbf{k}} (\sP^{(k)}_{21} - \sigma_{\mathbf{k'}}^{2}) = \sP^{(k)}_1 (\sP^{(k)}_{21} - \sigma_{\mathbf{k}}^{2} + \sigma_{\mathbf{k}}^{2}) \prod_{\mathbf{k'} \not \leq \mathbf{k}} (\sP^{(k)}_{21} - \sigma_{\mathbf{k'}}^{2}) \\
 = \sP^{(k)}_{1} \prod_{\mathbf{k'} \not < \mathbf{k}} (\sP^{(k)}_{21} - \sigma_{\mathbf{k'}}^{2}) + \sigma_{\mathbf{k}} C^{\mathbf{k}}_{1 1} \equiv \sigma_{\mathbf{k}} C^{\mathbf{k}}_{1 1} \mod H_{< \mathbf{k}}.
\end{multline}
Similarly, $\sP^{(k)}_2 C^{\mathbf{k}}_{12} \equiv \sigma_{\mathbf{k}} C^{\mathbf{k}}_{2 2} \mod H_{< \mathbf{k}}$.  Thus the left action of $H$ on $H_{\leq \mathbf{k}} / H_{< \mathbf{k}}$ in the basis $(C^{\mathbf{k}}_{1 1}, C^{\mathbf{k}}_{2 1}, C^{\mathbf{k}}_{2 2}, C^{\mathbf{k}}_{1 2})$ is given by
\be
\label{e cellular basis action}
\sP^{(k)}_1 \mapsto
\mat{[2]^k & \sigma_{\mathbf{k}} & 0 & 0 \\
0 & 0 & 0 & 0 \\
0 & 0 & 0 & 0 \\
0 & 0 & \sigma_{\mathbf{k}} & [2]^k },\
\sP^{(k)}_2 \mapsto
\mat{0 & 0 & 0 & 0 \\
\sigma_{\mathbf{k}} & [2]^k & 0 & 0 \\
0 & 0 & [2]^k & \sigma_{\mathbf{k}} \\
0 & 0 & 0 & 0}.
\ee
If $m$ is odd, this verifies condition (ii) of Definition \ref{d cellular algebra}.

If $m$ is even, we also need the following, immediate from the definition of $\Cbasis$ in \eqref{e canonical basis definition even},
\[ \begin{array}{rclcrcl}
\sP^{(k)}_{1} \nsbr{\epsilon}_{1} & = & \two^{k} \nsbr{\epsilon}_{1}, & \quad & \sP^{(k)}_{2} \nsbr{\epsilon}_{1} & = & \nsbr{\epsilon}_{+} \equiv \ 0 \mod H_{< \nsbr{\epsilon}_{1}}, \\
\sP^{(k)}_{1} \nsbr{\epsilon}_{2} & = & \nsbr{\epsilon}_{+} \equiv \ 0 \mod H_{< \nsbr{\epsilon}_{2}}, & \quad & \sP^{(k)}_{2} \nsbr{\epsilon}_{2} & = & \two^{k} \nsbr{\epsilon}_{2}.
\end{array}\]

The claim that $* = 1^{\text{op}}$ is straightforward.
\end{proof}

\subsection{}
\label{ss specializations}
Cellular algebras are well-suited for studying specializations.  For this, we need some additional definitions from \cite{GrahamLehrer}.
Let $M_{\lambda}$ be the left $H$-module that is the submodule of $H_{\leq \lambda} / H_{<\lambda}$ with $R$-basis
$\{ C^{\lambda}_{ST} : S \in \Tab (\lambda) \}$ for some $T \in \Tab (\lambda)$; this basis is independent of $T$ and we denote its elements by
$C^{\lambda}_{S} = C^{\lambda}_{ST}, S \in \Tab (\lambda)$.
\begin{definition}
\label{d bilinear form}
For $\lambda \in \Lambda$, the bilinear form $\phi_\lambda : M_{\lambda} \times M_{\lambda} \to R$ is defined in the basis $\{ C^{\lambda}_{S} : S \in \Tab( \lambda ) \}$ as follows. For $S, T \in \Tab(\lambda)$ let
$\phi_{\lambda} (C^\lambda_S, C^\lambda_T )$ be the element of $R$ determined by
\[ C^\lambda_{US} C^\lambda_{TV} \equiv \phi_{\lambda} ( C^\lambda_S, C^\lambda_T ) C^\lambda_{UV} \mod H_{< \lambda}, \]
where $U$ and $V$ are any elements of $\Tab(\lambda)$.
\end{definition}
\begin{proposition}
\label{p bilinear form nsH k}
The bilinear forms $\phi_{\mathbf{k}}, \mathbf{k} \in \Lambda_{2}$, for the cellular algebra $R \nsH^{(k)}_{W}$ of Proposition \ref{p cellular basis nsH k} are given in the basis $(C^{\mathbf{k}}_{1}, C^{\mathbf{k}}_{2}) \times (C^{\mathbf{k}}_{1}, C^{\mathbf{k}}_{2})$ by
\be
\label{e phi sub k}
\Big( \prod_{\mathbf{k'} \not \leq \mathbf{k}} (\sigma_{\mathbf{k}}^{2} - \sigma_{\mathbf{k'}}^{2}) \Big)
\mat{
\two^{k} & \sigma_{\mathbf{k}} \\
\sigma_{\mathbf{k}} & \two^{k}
}.
\ee
\end{proposition}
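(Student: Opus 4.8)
The plan is to read off $\phi_{\mathbf{k}}$, for each two-dimensional cell $\mathbf{k}\in\Lambda_{2}$, directly from products of cellular basis elements modulo $H_{<\mathbf{k}}$, exactly as in Definition \ref{d bilinear form}. The essential input is already available from the proof of Proposition \ref{p cellular basis nsH k}: $H_{\le\mathbf{k}}/H_{<\mathbf{k}}$ is a free rank-$4$ $H$-bimodule with basis the images of $C^{\mathbf{k}}_{11},C^{\mathbf{k}}_{21},C^{\mathbf{k}}_{22},C^{\mathbf{k}}_{12}$, and the left action of $\sP^{(k)}_{1},\sP^{(k)}_{2}$ is given by the matrices in \eqref{e cellular basis action}. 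Since $H_{<\mathbf{k}}$ and $H_{\le\mathbf{k}}$ are two-sided ideals, the class of $C^{\mathbf{k}}_{US}C^{\mathbf{k}}_{TV}$ in this quotient is obtained by letting the element $C^{\mathbf{k}}_{US}\in H$ act on the left on the class of $C^{\mathbf{k}}_{TV}$; and, restricting to the cell module $M_{\mathbf{k}}$ with a fixed second index, Definition \ref{d bilinear form} says that left multiplication by $C^{\mathbf{k}}_{US}$ sends $C^{\mathbf{k}}_{T}\mapsto\phi_{\mathbf{k}}(C^{\mathbf{k}}_{S},C^{\mathbf{k}}_{T})\,C^{\mathbf{k}}_{U}$, i.e.\ acts by the matrix whose $U$-th row is $\bigl(\phi_{\mathbf{k}}(C^{\mathbf{k}}_{S},C^{\mathbf{k}}_{1}),\phi_{\mathbf{k}}(C^{\mathbf{k}}_{S},C^{\mathbf{k}}_{2})\bigr)$ and whose other rows vanish. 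So it suffices to compute, as operators on $M_{\mathbf{k}}$, the two elements $C^{\mathbf{k}}_{11}$ (which yields the first row of \eqref{e phi sub k}) and $C^{\mathbf{k}}_{12}$ (the second row).

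I would first record that on $M_{\mathbf{k}}$, in the basis $(C^{\mathbf{k}}_{1},C^{\mathbf{k}}_{2})$, the element $\sP^{(k)}_{21}=\sP^{(k)}_{2}\sP^{(k)}_{1}$ acts by $\mat{0 & 0 \\ \two^{k}\sigma_{\mathbf{k}} & \sigma_{\mathbf{k}}^{2}}$ (lower triangular, with eigenvalue $0$ on $C^{\mathbf{k}}_{1}$ and $\sigma_{\mathbf{k}}^{2}$ on $C^{\mathbf{k}}_{2}$) and $\sP^{(k)}_{1}$ acts by $\mat{\two^{k} & \sigma_{\mathbf{k}} \\ 0 & 0}$, with the mirror statements for $\sP^{(k)}_{12}$, $\sP^{(k)}_{2}$ on the other block. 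Hence for any polynomial $f=\sum_j f_j y^j$ the operator $f(\sP^{(k)}_{21})$ is lower triangular with diagonal $f(0),f(\sigma_{\mathbf{k}}^{2})$ and off-diagonal entry $\tfrac{\two^{k}}{\sigma_{\mathbf{k}}}\bigl(f(\sigma_{\mathbf{k}}^{2})-f(0)\bigr)$, the off-diagonal term arising by telescoping $\sum_{j\ge 1} f_{j}\,\two^{k}\sigma_{\mathbf{k}}^{2j-1}$. Taking $f(y)=\prod_{\mathbf{k'}\not\leq\mathbf{k}}(y-\sigma_{\mathbf{k'}}^{2})$ and composing on the left with $\sP^{(k)}_{1}$, the quadratic relation $(\sP^{(k)}_{s})^{2}=\two^{k}\sP^{(k)}_{s}$ of \eqref{e W nsH k quadratic relation} cancels the $f(0)$-correction term and collapses the product to $f(\sigma_{\mathbf{k}}^{2})$ times a clean $2\times 2$ matrix supported on its first row; incorporating the scalar $\sigma_{\mathbf{k}}$ in the definition of $C^{\mathbf{k}}_{11}$ then gives the first row of \eqref{e phi sub k}. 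The same computation with $\sP^{(k)}_{2}$, $\sP^{(k)}_{12}$ produces the second row, and the off-diagonal entries match, as they must by symmetry of the cellular form. No separate argument is required when $m$ is even: for $\mathbf{k}\in\Lambda_{2}$ the basis elements $C^{\mathbf{k}}_{ST}$ and the action \eqref{e cellular basis action} are insensitive to the parity of $m$.

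If I wanted to minimize the bookkeeping, I would instead note that by the general theory of cellular algebras $\phi_{\mathbf{k}}$ is a bilinear form on $M_{\mathbf{k}}$ invariant under the involution $*=1^{\text{op}}$, and that $1^{\text{op}}$ fixes each $\sP^{(k)}_{s}$; since $M_{\mathbf{k}}\cong\bar{N}^{(k)}_{\mathbf{k}}$ and $[2]$ is invertible in $R$, a short linear-algebra check using \eqref{e bar X action} shows that the $R$-module of such invariant forms is free of rank $1$, generated by the Gram matrix appearing in \eqref{e phi sub k}. It then only remains to pin down the single scalar $\phi_{\mathbf{k}}(C^{\mathbf{k}}_{1},C^{\mathbf{k}}_{1})$ — the coefficient of $C^{\mathbf{k}}_{11}$ in $C^{\mathbf{k}}_{11}C^{\mathbf{k}}_{11}\bmod H_{<\mathbf{k}}$ — which the computation of the previous paragraph supplies at once. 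I do not anticipate a genuine obstacle: the whole argument is elementary once \eqref{e cellular basis action} is in hand, and the only mildly delicate point is the interplay of the telescoping sum with the quadratic relation, which is precisely what makes the answer factor as the single polynomial $\prod_{\mathbf{k'}\not\leq\mathbf{k}}(\sigma_{\mathbf{k}}^{2}-\sigma_{\mathbf{k'}}^{2})$ times a constant matrix.
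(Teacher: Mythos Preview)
Your proposal is correct and follows essentially the same route as the paper's proof: both read off the bilinear form by letting the elements $C^{\mathbf{k}}_{US}$ act on $M_{\mathbf{k}}$ via the explicit matrices \eqref{e cellular basis action}, the only difference being that the paper packages the calculation as the single formula for $\sP^{(k)}_{2}(\sP^{(k)}_{12})^{j}$ and does the $(2,2)$ entry first, whereas you expand $f(\sP^{(k)}_{21})$ as a lower-triangular matrix and compute the first row. Your alternative paragraph (uniqueness of the invariant form up to scalar, then pinning down one entry) is a mild repackaging rather than a genuinely different argument.
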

\begin{proof}
By \eqref{e cellular basis action}, the action of $\sP_{2}^{(k)} ( \sP_{12}^{(k)} )^{j}$ on $M_{\mathbf{k}}$ in the basis $(C^{\mathbf{k}}_{1}, C^{\mathbf{k}}_{2})$ is given by
\[ \mat{
0 & 0 \\
\sigma_{\mathbf{k}}^{2j+1} & \two^{k} \sigma_{\mathbf{k}}^{2j}
}. \]
It follows that $C^{\mathbf{k}}_{2 2} C^{\mathbf{k}}_{21} = \two^{k} \prod_{\mathbf{k'} \not \leq \mathbf{k}} (\sigma_{\mathbf{k}}^{2} - \sigma_{\mathbf{k'}}^{2}) C^{\mathbf{k}}_{21} \mod H_{< \mathbf{k}}$, which accounts for the $(2,2)$ entry of the matrix in \eqref{e phi sub k}. The other entries are computed similarly.
\end{proof}

Now suppose that $R$ is a field.  Let $\rad (M_{\lambda} ) = \{ x \in M_\lambda : \phi_{\lambda} (x,y) = 0 \text{ for all } y \in M_\lambda \}$.  Define $\Lambda' = \{ \lambda \in \Lambda : \phi_{\lambda} \ne 0 \}$.
\begin{proposition}[Graham-Lehrer \cite{GrahamLehrer}]
Maintain the notation of the general setup of Definition \ref{d cellular algebra}.  Let $\lambda \in \Lambda$.  Then
\begin{list}{\emph{(\roman{ctr})}} {\usecounter{ctr} \setlength{\itemsep}{1pt} \setlength{\topsep}{2pt}}
\item $\rad(M_{\lambda})$ is an $H$-submodule of $M_{\lambda}$.
\item If $\phi_{\lambda} \ne 0$, the quotient $L_{\lambda} := M_{\lambda} / \rad(M_{\lambda})$ is absolutely irreducible.
\item If $\phi_{\lambda} \ne 0$, $\rad(M_{\lambda})$ is the minimal submodule of $M_{\lambda}$ with semisimple quotient.
\item The set $\{ L_{\lambda} : \lambda \in \Lambda' \}$ is a complete set of absolutely irreducible $H$-modules.
\end{list}
\end{proposition}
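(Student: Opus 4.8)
The plan is to run Graham and Lehrer's cell-module machinery \cite{GrahamLehrer}; all four assertions rest on one ``associativity'' property of the form $\phi_{\lambda}$, namely that $\phi_{\lambda}(ha, b) = \phi_{\lambda}(a, h^{*} b)$ for all $h \in H$ and $a, b \in M_{\lambda}$, where $h \mapsto h^{*}$ is the anti-isomorphism of Definition \ref{d cellular algebra}(i). I would establish this by reducing to $h = C^{\mu}_{ST}$ (the cell basis spans $H$, and $H_{<\lambda}$ annihilates $M_{\lambda}$) and expanding $C^{\mu}_{ST} C^{\lambda}_{US} C^{\lambda}_{TV}$ modulo $H_{<\lambda}$ in two ways using the defining congruence for $\phi_{\lambda}$ together with the symmetry of $\phi_{\lambda}$ (a standard consequence of the axioms). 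Statement (i) is then immediate: if $\phi_{\lambda}(a, M_{\lambda}) = 0$ then $\phi_{\lambda}(ha, M_{\lambda}) = \phi_{\lambda}(a, h^{*} M_{\lambda}) = 0$, so $\rad(M_{\lambda})$ is an $H$-submodule.

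For (ii) and (iii) the crucial tool is the explicit left action of a cell basis element on $M_{\lambda}$, read off from the defining congruence of $\phi_{\lambda}$ with the second lower index held fixed:
\[ C^{\lambda}_{ST} \cdot C^{\lambda}_{U} = \phi_{\lambda}(C^{\lambda}_{T}, C^{\lambda}_{U})\, C^{\lambda}_{S} \qquad \text{in } M_{\lambda}. \]
Given $a = \sum_{U} r_{U} C^{\lambda}_{U} \in M_{\lambda} \smallsetminus \rad(M_{\lambda})$, since $R$ is a field there is a $T$ with $\sum_{U} r_{U} \phi_{\lambda}(C^{\lambda}_{T}, C^{\lambda}_{U}) = c \ne 0$, whence $C^{\lambda}_{ST} \cdot a = c\, C^{\lambda}_{S}$ for every $S \in \Tab(\lambda)$, so $Ha = M_{\lambda}$. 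Thus every element outside $\rad(M_{\lambda})$ generates $M_{\lambda}$; equivalently, $\rad(M_{\lambda})$ is the unique maximal $H$-submodule of $M_{\lambda}$. When $\phi_{\lambda} \ne 0$ this submodule is proper, so $L_{\lambda} = M_{\lambda}/\rad(M_{\lambda})$ is irreducible, and since every proper submodule lies in $\rad(M_{\lambda})$, that radical is the minimal submodule with semisimple quotient; this is (iii). For the word ``absolutely'' in (ii), I would argue by base change: for any field extension $R'/R$ the form $\phi_{\lambda} \otimes_{R} R'$ on $M_{\lambda} \otimes_{R} R'$ is again nonzero with radical $\rad(M_{\lambda}) \otimes_{R} R'$, so $L_{\lambda} \otimes_{R} R'$ is irreducible by the argument just given; equivalently, the induced $*$-invariant form on $L_{\lambda}$ is nondegenerate, forcing $\End_{H}(L_{\lambda}) = R$.

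For (iv) I would use the cell filtration of $H$: listing $\Lambda$ as $\lambda_{1}, \lambda_{2}, \ldots$ so that $\lambda_{i} < \lambda_{j}$ implies $i < j$ (smaller poset elements first), the $R$-submodules $H_{\{\lambda_{1}, \ldots, \lambda_{i}\}}$ of Definition \ref{d cellular algebra} form an increasing chain of left ideals whose $i$-th successive quotient is isomorphic, as a left $H$-module, to a direct sum of $|\Tab(\lambda_{i})|$ copies of $M_{\lambda_{i}}$. Any irreducible $H$-module $L$ is cyclic, hence a quotient of $H$ as a left module, hence a composition factor of $H$, hence a composition factor of some $M_{\lambda}$. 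By the Graham--Lehrer analysis, the composition factors of any $M_{\lambda}$ are among the $L_{\mu}$ with $\mu \in \Lambda'$, with $L_{\lambda}$ occurring exactly once when $\lambda \in \Lambda'$; hence $L \cong L_{\mu}$ for some $\mu \in \Lambda'$. Moreover the decomposition matrix, restricted to rows and columns indexed by $\Lambda'$, is unitriangular for the poset order, so the classes $[L_{\mu}]$, $\mu \in \Lambda'$, are linearly independent in the Grothendieck group and the $L_{\mu}$ are pairwise non-isomorphic; this completes (iv). The main obstacle is this last step: assembling the cell filtration of $H$ and proving the poset-triangularity of the composition multiplicities takes some care, but it is exactly the content of \cite{GrahamLehrer}, which I would invoke rather than reprove.
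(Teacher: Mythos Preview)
The paper does not supply its own proof of this proposition; it is stated with attribution to Graham--Lehrer \cite{GrahamLehrer} and used as a black box. Your proposal is a faithful and correct reconstruction of the standard Graham--Lehrer argument, so there is nothing to compare against beyond the citation itself.
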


\begin{definition}
The \emph{decomposition matrix} of a cellular algebra $H$ is the matrix $(d_{\lambda \mu})_{\lambda \in \Lambda, \ \mu \in \Lambda'}$, where $d_{\lambda \mu}$ is the multiplicity of $L_{\mu}$ in $M_{\lambda}$.
\end{definition}
We next compute the decomposition matrix of the specialization $\nsH^{(k)}_{W} |_{\u = 1}$.  For $\mathbf{k} \in \Lambda_{2}$, the \emph{residue} of $\mathbf{k}$, denoted $r (\mathbf{k})$, is the unique integer in
\[
\begin{cases}
\{ tm+ \alpha \sum_{j=1}^{n} j k_{j} : t \in \ZZ,\alpha \in \{1,-1\} \} \cap \{ 0, 1, \dots, \lceil {\small \frac{m-1}{2}} \rceil \} & \text{if } \mathbf{k} \in \compositions{n}{\leq k},\\
\{ tm+\frac{m}{2} + \alpha \sum_{j=1}^{n} j k^1_{j} : t \in \ZZ,\alpha \in \{1,-1\} \} \cap \{ 0, 1, \dots, \lceil {\small \frac{m-1}{2}} \rceil \} & \text{if } \mathbf{k^1} \in \compositions{n}{<k}^1.
\end{cases}
\]
It is convenient to define $r(\nsbr{\epsilon}_+) = r(\nsbr{\epsilon}_-) = 0$ and  $r(\nsbr{\epsilon}_1)= r(\nsbr{\epsilon}_2) = \frac{m}{2}.$
\begin{proposition}
\label{p cellular basis u is 1 m odd}
Maintain the notation of Proposition \ref{p cellular basis nsH k}.  Suppose that $m$ is odd, $\Lambda_{2}$ is totally ordered, and  $R = \CC$ with $\mathbf{A}^* \to R$ given by $\u \mapsto 1$. Then for any $\lambda \in \Lambda$,
\[ \phi_{\lambda} \text{ has rank }
\begin{cases}
2 & \text{if $\lambda$ is the maximal element of $\Lambda$ with its residue and $r (\lambda) \ne 0$}, \\
1 & \text{if $\lambda$ is the maximal element of $\Lambda_2$ with residue $0$}, \\
1 & \text{if $\lambda = \nsbr{\epsilon}_-$}, \\
0 & \text{otherwise.}
\end{cases} \]
The decomposition matrix of $\nsH^{(k)}_W |_{\u =1}$ is given by
\[ d_{\lambda \mu } =
\begin{cases}
1 & \text{if $r (\lambda) = r (\mu)$ and  $\{\lambda, \mu\} \ne \{\nsbr{\epsilon}_+,\nsbr{\epsilon}_- \}$,} \\
0 & \text{otherwise.}
\end{cases} \]
\end{proposition}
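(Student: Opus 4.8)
The plan is to compute each bilinear form $\phi_\lambda$ explicitly, evaluate it at $\u=1$, read off its rank, and then feed the structure of the cell modules into the Graham--Lehrer dictionary to get the decomposition numbers. For $\mathbf{k}\in\Lambda_2$, Proposition~\ref{p bilinear form nsH k} gives $\phi_{\mathbf{k}}=\big(\prod_{\mathbf{k'}\not\leq\mathbf{k}}(\sigma_{\mathbf{k}}^2-\sigma_{\mathbf{k'}}^2)\big)\,\mat{\two^k & \sigma_{\mathbf{k}} \\ \sigma_{\mathbf{k}} & \two^k}$. Specializing $\u\mapsto1$, we have $\two^k\mapsto 2^k$ and, by \eqref{e a coefficients u is 1}, $\sigma_{\mathbf{k}}\mapsto 2^k\cos(\tfrac{\pi}{m}\sum_j jk_j)$, so $\sigma_{\mathbf{k}}^2|_{\u=1}=2^{2k}\cos^2(\tfrac{\pi}{m}r(\mathbf{k}))$. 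Since $\cos^2(\tfrac{\pi}{m}N)$ depends only on the residue of $N$ and $\cos^2$ is injective on $[0,\tfrac{\pi}{2}]$ (here using $m$ odd, so $r(\mathbf{k})<\tfrac{m}{2}$), we get: $\sigma_{\mathbf{k}}^2|_{\u=1}=\sigma_{\mathbf{k'}}^2|_{\u=1}\iff r(\mathbf{k})=r(\mathbf{k'})$, and $\sigma_{\mathbf{k}}^2|_{\u=1}\in\{0,\f^k\}\iff r(\mathbf{k})=0$ (in which case it equals $\f^k$). Because $\Lambda_2$ is totally ordered, the scalar factor is nonzero at $\u=1$ iff no $\mathbf{k'}>\mathbf{k}$ has the residue of $\mathbf{k}$, i.e.\ iff $\mathbf{k}$ is the largest element of $\Lambda_2$ (equivalently of $\Lambda$, as $r(\bar\epsilon_\pm)=0$) with that residue; and the $2\times2$ matrix has determinant $2^{2k}-\sigma_{\mathbf{k}}^2$, hence is invertible iff $r(\mathbf{k})\neq0$ and has rank $1$ if $r(\mathbf{k})=0$. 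This yields the stated rank for every $\lambda\in\Lambda_2$.

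\textbf{Step 2 (the forms $\phi_{\bar\epsilon_-}$ and $\phi_{\bar\epsilon_+}$).} Since $\bar\epsilon_-$ is the maximum of $\Lambda$, $H_{<\bar\epsilon_-}$ is spanned by all cellular basis elements other than $C^{\bar\epsilon_-}=1$, and $1\cdot1=C^{\bar\epsilon_-}\notin H_{<\bar\epsilon_-}$, so $\phi_{\bar\epsilon_-}=1$ has rank $1$. For $\bar\epsilon_+$, the minimum of $\Lambda$, we have $H_{<\bar\epsilon_+}=0$, so $\phi_{\bar\epsilon_+}$ is the scalar with $(C^{\bar\epsilon_+})^2=\phi_{\bar\epsilon_+}\,C^{\bar\epsilon_+}$. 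Using $(\sP^{(k)}_s)^2=\two^k\sP^{(k)}_s$ together with both expressions $C^{\bar\epsilon_+}=\sP^{(k)}_1G_{m,k}(\sP^{(k)}_{21})=\sP^{(k)}_2G_{m,k}(\sP^{(k)}_{12})$ (equal by \eqref{e W nsH k braid relation odd}), one gets $\sP^{(k)}_sC^{\bar\epsilon_+}=\two^kC^{\bar\epsilon_+}$; applying $1^{\text{op}}$, which fixes $C^{\bar\epsilon_+}$ and each $\sP^{(k)}_s$ (as $s=s^{-1}$), also $C^{\bar\epsilon_+}\sP^{(k)}_s=\two^kC^{\bar\epsilon_+}$. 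Hence $C^{\bar\epsilon_+}\sP^{(k)}_{21}=\f^kC^{\bar\epsilon_+}$, so $C^{\bar\epsilon_+}p(\sP^{(k)}_{21})=p(\f^k)C^{\bar\epsilon_+}$ for any polynomial $p$, and therefore $(C^{\bar\epsilon_+})^2=C^{\bar\epsilon_+}\sP^{(k)}_1G_{m,k}(\sP^{(k)}_{21})=\two^kG_{m,k}(\f^k)\,C^{\bar\epsilon_+}$; thus $\phi_{\bar\epsilon_+}=\two^kG_{m,k}(\f^k)$. At $\u=1$ this is a nonzero scalar times $\prod_{\mathbf{k'}\in\Lambda_2}\sin^2(\tfrac{\pi}{m}\sum_j jk'_j)$, which vanishes exactly when some $\mathbf{k'}\in\Lambda_2$ has residue $0$. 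Combined with Step~1, this finishes the rank computation.

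\textbf{Step 3 (cell modules and simples at $\u=1$).} The cell module $M_{\bar\epsilon_\pm}$ is the one-dimensional module $\bar\epsilon_\pm$, and by \eqref{e cellular basis action}, after rescaling the basis of $M_{\mathbf{k}}$ by $\sigma_{\mathbf{k}}$, one has $M_{\mathbf{k}}\cong\bar{X}^{(k)}(\sigma_{\mathbf{k}}^2)$ for $\mathbf{k}\in\Lambda_2$. At $\u=1$, \eqref{e bar X irreducible} shows $M_{\mathbf{k}}$ is absolutely irreducible when $r(\mathbf{k})\neq0$; when $r(\mathbf{k})=0$ it is $\bar{X}^{(k)}(\f^k)$, which by a direct computation with \eqref{e bar X action} is uniserial with socle $\bar\epsilon_-$ (spanned by the vector $(\two^k,-1)$) and head $\bar\epsilon_+$. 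Also $M_{\mathbf{k}}\cong M_{\mathbf{k'}}$ at $\u=1$ whenever $r(\mathbf{k})=r(\mathbf{k'})$. From Steps~1--2, $\Lambda'$ consists of $\bar\epsilon_-$, the $\leq$-largest element of $\Lambda_2$ of each residue occurring in $\Lambda_2$, and $\bar\epsilon_+$ precisely when residue $0$ does not occur in $\Lambda_2$. Matching cell modules to heads: $L_{\bar\epsilon_-}=\bar\epsilon_-$; $L_{\mathbf{k}}=M_{\mathbf{k}}$ (two-dimensional) for $\mathbf{k}$ largest with $r(\mathbf{k})\neq0$; and $\bar\epsilon_+$ is realized as $L_{\mathbf{k}_0}$, $\mathbf{k}_0$ the largest element of $\Lambda_2$ with residue $0$ (if present), otherwise as $L_{\bar\epsilon_+}$. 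In particular each simple $\CC\nsH^{(k)}_W|_{\u=1}$-module is attached to a single residue value, with the trivial module $\bar\epsilon_+$ and the sign module $\bar\epsilon_-$ both of residue $0$.

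\textbf{Step 4 (decomposition numbers) and the main obstacle.} Now $d_{\lambda\mu}=[M_\lambda:L_\mu]$. By Step~3 the modules $M_{\bar\epsilon_\pm}$ and $M_\lambda$ ($\lambda\in\Lambda_2$, $r(\lambda)\neq0$) are simple, while $M_{\mathbf{k}}$ with $r(\mathbf{k})=0$ has composition factors $\bar\epsilon_+,\bar\epsilon_-$ each once. Reading off which $L_\mu$ these factors are --- using that the only residue-$0$ simple occurring as a submodule of a cell module is $\bar\epsilon_-=L_{\bar\epsilon_-}$ and the only one occurring as a quotient is $\bar\epsilon_+$ --- gives $d_{\lambda\mu}=1$ exactly when $L_\mu$ and $M_\lambda$ carry the same residue, with the sole exception that $\bar\epsilon_+$ and $\bar\epsilon_-$ (both residue $0$) never appear as composition factors of one another's cell modules; this is the stated formula. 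The routine part is Step~1; the crux is Step~3: establishing the uniserial structure of the residue-$0$ cell modules and pinning down the full list of simples $L_\mu$ --- in particular the coincidence that $\bar\epsilon_+$ may be either its own cell module or the head of $M_{\mathbf{k}_0}$ --- which is exactly what forces the $\{\bar\epsilon_+,\bar\epsilon_-\}$ exception. The computation of $\phi_{\bar\epsilon_+}$ in Step~2, showing $C^{\bar\epsilon_+}$ is a scalar multiple of an idempotent by means of the nonstandard braid relation, is the other place requiring care.
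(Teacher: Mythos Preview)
Your argument is correct and follows exactly the route the paper's (one-sentence) proof indicates: Proposition~\ref{p bilinear form nsH k} for the Gram matrix on $\Lambda_2$, the specialization \eqref{e a coefficients u is 1}, the cell-module action \eqref{e cellular basis action}, and \eqref{e bar X irreducible}--\eqref{e bar X distinct} for the structure of $\bar X^{(k)}(c)$ at $\u=1$; you simply supply the details the paper omits. Your Step~2 computation of $\phi_{\bar\epsilon_+}=\two^k G_{m,k}(\f^k)$ via the nonstandard braid relation is a nice touch and in fact sharpens the stated rank table (it shows $\phi_{\bar\epsilon_+}\neq 0$ precisely when no $\mathbf{k}'\in\Lambda_2$ has residue $0$, e.g.\ at $k=1$), which is consistent with the paper's phrasing only when residue $0$ actually occurs in $\Lambda_2$.
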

\begin{proof}
This follows from the computation of  $a_\mathbf{k}|_{\u=1}$,  $a^1_\mathbf{k}|_{\u=1}$ in \eqref{e a coefficients u is 1}, Proposition \ref{p bilinear form nsH k}, \eqref{e cellular basis action}, and \eqref{e bar X distinct}.  In the case $\phi_{\mathbf{k}}$ has rank $1$ $(\mathbf{k} \in \Lambda_2)$, a direct computation shows that $M_{\mathbf{k}}$ has $\nsbr{\epsilon}_{-}$ as an irreducible submodule with quotient $L_{\mathbf{k}} = \nsbr{\epsilon}_{+}$.
\end{proof}

\begin{remark}
Proposition \ref{p cellular basis u is 1 m odd} does not apply to the $\u = 1$ specialization if $m$ is even because $\sigma_{\mathbf{k}}|_{\u=1}$ can be $0$.  It is not difficult to modify the cellular basis to obtain a result similar to Proposition \ref{p cellular basis u is 1 m odd}, which we do below.  This will show that the conclusions of cellular algebra theory do indeed not hold in this case, so the assumptions of Proposition \ref{p cellular basis u is 1 m odd} are necessary, not an artifact of our choice of basis.
\end{remark}

\begin{proposition}
\label{p cellular basis u is 1 m even}
Suppose that $m$ is even and otherwise maintain the setup of Proposition \ref{p cellular basis u is 1 m odd}.  Let $\Cbasis'$ be the same as $\Cbasis$ in \eqref{e canonical basis definition even} with the following modifications
\[
\begin{array}{l@{\ := \ \ }l}
{C'}^\mathbf{k}_{1 1} & \sP^{(k)}_1 \prod_{\mathbf{k'} \not \leq \mathbf{k}} (\sP^{(k)}_{21} - \sigma_{\mathbf{k'}}^{2}), \\
{C'}^\mathbf{k}_{2 2} & \sP^{(k)}_2 \prod_{\mathbf{k'} \not \leq \mathbf{k}} (\sP^{(k)}_{12} - \sigma_{\mathbf{k'}}^{2}). \\
\end{array}
\]
Then $\Cbasis'$ is a free $R$-basis of $H = R \nsH^{(k)}_{W}$.
Let $M_j = \nsbr{X}^{(1)}(w_j)$ as in Theorem \ref{t dihedral irreducibles}.
Then the left-representation afforded by $\{C'^{\mathbf{k}}_{11}, C'^{\mathbf{k}}_{21}\}$
\[    \begin{cases}
    \text{is isomorphic to  $M_{r(\mathbf{k})}|_{\u=1}$} & \text{if $r(\mathbf{k}) \in [n]$},\\
    \text{has a submodule  isomorphic to $\nsbr{\epsilon}_{-}$ with quotient $\nsbr{\epsilon}_{+}$} & \text{if $r(\mathbf{k}) =0$}\\
    \text{has a submodule  isomorphic to $\nsbr{\epsilon}_{2}$ with quotient $\nsbr{\epsilon}_{1}$} & \text{if $r(\mathbf{k}) = \frac{m}{2}$.}
    \end{cases}
    \]
While the left-representation afforded by $\{C'^{\mathbf{k}}_{22}, C'^{\mathbf{k}}_{12}\}$
\[
    \begin{cases}
    \text{is isomorphic to  $M_{r(\mathbf{k})}|_{\u=1}$} & \text{if $r(\mathbf{k}) \in [n]$},\\
    \text{has a submodule  isomorphic to $\nsbr{\epsilon}_{-}$ with quotient $\nsbr{\epsilon}_{+}$} & \text{if $r(\mathbf{k}) =0$}, \\
    \text{has a submodule  isomorphic to $\nsbr{\epsilon}_{1}$ with quotient $\nsbr{\epsilon}_{2}$} & \text{if $r(\mathbf{k}) =\frac{m}{2}$.}
    \end{cases}
    \]
\end{proposition}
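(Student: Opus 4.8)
The plan is to follow the proof of Proposition~\ref{p cellular basis nsH k} and the banal Example~\ref{ex banal example}, but to carry every step out by hand: as the preceding remark warns, at $\u=1$ the algebra $H=R\nsH^{(k)}_W$ need not be cellular, so the Graham--Lehrer conclusions about cell modules are unavailable. First I would check that $\Cbasis'$ is a free $R$-basis of $H$. Exactly as in the proof of Proposition~\ref{p cellular basis nsH k}, each element of $\Cbasis'$, written as a polynomial in $\sP^{(k)}_1,\sP^{(k)}_2$, lies in one of the four ``$\bar\epsilon_+\times\bar\epsilon_+$-isotypic'' pieces of $H$ for the $H_{\{s_i\}}\times H^{\text{op}}_{\{s_j\}}$-bimodule structures ($i,j\in\{1,2\}$), each of which is $R$-free on the monomials $\sP^{(k)}_1(\sP^{(k)}_{21})^t$, $(\sP^{(k)}_{21})^t$, $\sP^{(k)}_2(\sP^{(k)}_{12})^t$, $(\sP^{(k)}_{12})^t$ respectively, together with the line $R\cdot 1$. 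The point of dropping the scalar $\sigma_{\mathbf{k}}$ is that ${C'}^{\mathbf{k}}_{11}=\sP^{(k)}_1\prod_{\mathbf{k'}\not\leq\mathbf{k}}(\sP^{(k)}_{21}-\sigma_{\mathbf{k'}}^2)$ now has leading monomial $\sP^{(k)}_1(\sP^{(k)}_{21})^{d_{\mathbf{k}}}$, where $d_{\mathbf{k}}=\#\{\mathbf{k'}\in\Lambda_2:\mathbf{k'}\not\leq\mathbf{k}\}$, with leading coefficient $1$ rather than $\sigma_{\mathbf{k}}$ (which may vanish at $\u=1$); since $\Lambda_2$ is totally ordered the integers $d_{\mathbf{k}}$ are distinct, and the same holds for ${C'}^{\mathbf{k}}_{21},{C'}^{\mathbf{k}}_{22},{C'}^{\mathbf{k}}_{12}$ and the $C^{\bar\epsilon}$'s. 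A triangularity argument as in Example~\ref{ex banal example}, valid over any ring because all leading coefficients are units, shows $\Cbasis'$ is $R$-linearly independent; since $|\Cbasis'|=4|\Lambda_2|+4$ equals $\dim_{\field}\field\nsH^{(k)}_W$ by Theorem~\ref{t nsH W 2d reps}, it is an $R$-basis.

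Next I would read off the two left representations modulo $H_{<\mathbf{k}}$. The quadratic relation $(\sP^{(k)}_s)^2=\two^k\sP^{(k)}_s$ gives at once $\sP^{(k)}_1{C'}^{\mathbf{k}}_{11}=\two^k{C'}^{\mathbf{k}}_{11}$, $\sP^{(k)}_2{C'}^{\mathbf{k}}_{11}={C'}^{\mathbf{k}}_{21}$ and $\sP^{(k)}_2{C'}^{\mathbf{k}}_{21}=\two^k{C'}^{\mathbf{k}}_{21}$, while writing $\sP^{(k)}_{21}=(\sP^{(k)}_{21}-\sigma_{\mathbf{k}}^2)+\sigma_{\mathbf{k}}^2$ as in the proof of Proposition~\ref{p cellular basis nsH k} gives $\sP^{(k)}_1{C'}^{\mathbf{k}}_{21}\equiv\sigma_{\mathbf{k}}^2\,{C'}^{\mathbf{k}}_{11}$ modulo $H_{<\mathbf{k}}$. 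Thus $\{{C'}^{\mathbf{k}}_{11},{C'}^{\mathbf{k}}_{21}\}$ affords the module $\bar{X}^{(k)}(\sigma_{\mathbf{k}}^2)$ of \eqref{e bar X action}, and the mirror computation shows $\{{C'}^{\mathbf{k}}_{22},{C'}^{\mathbf{k}}_{12}\}$ affords the module obtained from $\bar{X}^{(k)}(\sigma_{\mathbf{k}}^2)$ by interchanging $\sP^{(k)}_1$ and $\sP^{(k)}_2$ (which, when irreducible, is again $\cong\bar{X}^{(k)}(\sigma_{\mathbf{k}}^2)$). Specializing at $\u=1$, \eqref{e a coefficients u is 1} gives $\sigma_{\mathbf{k}}|_{\u=1}=2^k\cos(\tfrac{\pi}{m}\sum_j jk_j)$, and the definition of $r(\mathbf{k})$ together with $w_j=4\cos^2(\tfrac{\pi j}{m})$ (here $w_0=4$, $w_{m/2}=0$) yields $\sigma_{\mathbf{k}}^2|_{\u=1}=2^{2k}\cos^2(\tfrac{\pi r(\mathbf{k})}{m})=(\f^{k-1}w_{r(\mathbf{k})})|_{\u=1}$.

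Then I would run the case analysis on $r(\mathbf{k})$. If $r(\mathbf{k})\in[n]$ then $\sigma_{\mathbf{k}}^2|_{\u=1}\notin\{0,\f^k|_{\u=1}\}$, so $\bar{X}^{(k)}(\sigma_{\mathbf{k}}^2)|_{\u=1}$ is absolutely irreducible by \eqref{e bar X irreducible}; a computation with \eqref{e sP^k definition2} as in the proof of Theorem~\ref{t nsH W 2d reps} shows $\epsilon_+^{\tsr k-1}\tsr M_j\cong\bar{X}^{(k)}(\f^{k-1}w_j)$, so by \eqref{e bar X distinct} both modules are isomorphic to $M_{r(\mathbf{k})}|_{\u=1}$ (regarded as an $\nsH^{(k)}_W$-module via $\epsilon_+^{\tsr k-1}\tsr-$). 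If $r(\mathbf{k})=0$ then $\sigma_{\mathbf{k}}^2|_{\u=1}=\f^k|_{\u=1}$, and the discussion around \eqref{e bar X irreducible} shows $\bar{X}^{(k)}(\f^k)$ has $\mat{\two^k\\-1}$ spanning a submodule on which both $\sP^{(k)}_i$ act by $0$, i.e.\ $\cong\bar\epsilon_-$, with quotient $\bar\epsilon_+$; the mirror module behaves the same way. If $r(\mathbf{k})=m/2$ then $\sigma_{\mathbf{k}}^2|_{\u=1}=0$, and $\bar{X}^{(k)}(0)$ has $\mat{0\\1}$ spanning a submodule with $\sP^{(k)}_1\mapsto0$, $\sP^{(k)}_2\mapsto\two^k$, i.e.\ $\cong\bar\epsilon_2$, and quotient $\bar\epsilon_1$ --- whereas the mirror module, with $\sP^{(k)}_1$ and $\sP^{(k)}_2$ interchanged, has submodule $\bar\epsilon_1$ and quotient $\bar\epsilon_2$. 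This last swap is exactly the asymmetry recorded in the statement, and it is the heart of the matter: because cellular-algebra theory does not apply at this specialization, the two ``cell modules'' $\{{C'}^{\mathbf{k}}_{11},{C'}^{\mathbf{k}}_{21}\}$ and $\{{C'}^{\mathbf{k}}_{22},{C'}^{\mathbf{k}}_{12}\}$ --- which a cellular structure would force to be isomorphic --- must be treated separately and are genuinely non-isomorphic when $r(\mathbf{k})=m/2$. No single computation is hard; the obstacle is keeping this bookkeeping straight while respecting that semisimplicity and the usual cell-module machinery fail at $\u=1$.
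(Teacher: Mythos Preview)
Your proof is correct and follows exactly the route the paper indicates: the paper's proof says only that it is ``similar to that of Propositions~\ref{p cellular basis nsH k} and~\ref{p cellular basis u is 1 m odd},'' and you have carefully written out what this similarity entails---the triangularity/leading-monomial argument for the basis (with unit leading coefficients replacing the possibly-vanishing $\sigma_{\mathbf{k}}$), the computation of the action of $\sP^{(k)}_1,\sP^{(k)}_2$ on $\{{C'}^{\mathbf{k}}_{11},{C'}^{\mathbf{k}}_{21}\}$ and its mirror to obtain $\bar X^{(k)}(\sigma_{\mathbf{k}}^2)$, and the case split on $r(\mathbf{k})$ using \eqref{e a coefficients u is 1} and \eqref{e bar X irreducible}. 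One small omission: for $\mathbf{k}\in\compositions{n}{<k}^1$ you should use $a^1_{\mathbf{k}}|_{\u=1}=\sin\bigl(\tfrac{\pi}{m}\sum_j j k_j\bigr)$ rather than the cosine, but the residue is defined so that $\sigma_{\mathbf{k}}^2|_{\u=1}=4^{k-1}w_{r(\mathbf{k})}$ holds uniformly and your conclusion is unaffected.
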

\begin{proof}
The proof is similar to that of Propositions \ref{p cellular basis nsH k} and \ref{p cellular basis u is 1 m odd}.
\end{proof}

\begin{corollary}
 The maximal semisimple quotient $ \nsH^{(k)}_W|_{\u=1} / \rad( \nsH^{(k)}_W |_{\u=1})$ is isomorphic to $\CC W$.
\end{corollary}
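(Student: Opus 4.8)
The plan is to reduce the statement to a count of simple modules. Over an algebraically closed field, the maximal semisimple quotient of a finite-dimensional algebra $A$ is $\prod_{\mu} M_{\dim_{\CC} L_{\mu}}(\CC)$, where $\mu$ runs over a complete set of pairwise non-isomorphic simple $A$-modules $L_{\mu}$, and two such semisimple algebras are isomorphic precisely when they have the same multiset of simple-module dimensions. Since $\CC W$ is already semisimple ($\chr \CC = 0$) and split, it therefore suffices to show that $A := \nsH^{(k)}_W|_{\u=1}$ has the same multiset of simple-module dimensions as $\CC W$: by Theorem \ref{t dihedral irreducibles} at $\u = 1$ (equivalently, since $\nsH^{(1)}_W|_{\u=1} = \H_W|_{\u=1} = \CC W$), this multiset is two $1$'s and $(m-1)/2$ $2$'s when $m$ is odd, and four $1$'s and $(m-2)/2$ $2$'s when $m$ is even.

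For $m$ odd this is essentially already in hand. Proposition \ref{p cellular basis nsH k} makes $A$ a cellular algebra, and Proposition \ref{p cellular basis u is 1 m odd} computes the ranks of the cell forms $\phi_{\lambda}$ at $\u = 1$ together with the decomposition matrix. By the Graham--Lehrer theory recalled above, $\{L_{\mu} : \phi_{\mu} \ne 0\}$ is a complete irredundant list of simple $A$-modules; reading off Proposition \ref{p cellular basis u is 1 m odd}, this list consists of one two-dimensional module $L_{\lambda}$ for each residue $r \in [n]$ (the maximal cell of that residue), together with exactly two one-dimensional modules, $\bar{\epsilon}_{+}$ and $\bar{\epsilon}_{-}$. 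Every residue $r \in [n]$ does occur --- via the unit composition $e_{r}$ (entries all $0$ except the $r$-th, which is $1$), which lies in $\compositions{n}{\leq k}$ for all $k \ge 1$ and has residue $r$ --- and the two-dimensional $L_{\lambda}$ are pairwise non-isomorphic by \eqref{e bar X distinct}. So $A$ has two $1$-dimensional and $n = (m-1)/2$ $2$-dimensional simple modules, matching $\CC W$.

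For $m$ even, Proposition \ref{p cellular basis nsH k} no longer applies at $\u = 1$ since some $\sigma_{\mathbf{k}}$ vanish, so I would work instead from Proposition \ref{p cellular basis u is 1 m even} and its modified basis $\Cbasis'$. That proposition decomposes the regular representation of $A$: the pairs $\{C'^{\mathbf{k}}_{11}, C'^{\mathbf{k}}_{21}\}$ and $\{C'^{\mathbf{k}}_{22}, C'^{\mathbf{k}}_{12}\}$ afford $M_{r(\mathbf{k})}|_{\u=1}$ when $r(\mathbf{k}) \in [n]$, and non-split extensions built from $\bar{\epsilon}_{+}, \bar{\epsilon}_{-}$ (residue $0$) or $\bar{\epsilon}_{1}, \bar{\epsilon}_{2}$ (residue $m/2$) otherwise, while the remaining basis vectors $C^{\bar{\epsilon}_{+}}, C^{\bar{\epsilon}_{-}}, C^{\bar{\epsilon}_{1}}, C^{\bar{\epsilon}_{2}}$ realize those four one-dimensional modules directly. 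Passing to composition factors and using that the $M_{j}|_{\u=1}$, $j \in [n]$, are irreducible and pairwise non-isomorphic (by \eqref{e bar X irreducible} and \eqref{e bar X distinct}), the distinct simple $A$-modules are exactly $\bar{\epsilon}_{+}, \bar{\epsilon}_{-}, \bar{\epsilon}_{1}, \bar{\epsilon}_{2}$ and $M_{j}|_{\u=1}$, $j \in [n]$ --- four $1$-dimensional and $n = (m-2)/2$ $2$-dimensional, matching $\CC W$. In both parities the multisets of simple-module dimensions agree, whence $A/\rad(A) \cong \CC W$.

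The main obstacle is the $m$ even case, where there is no honest cellular structure on $A$ at $\u = 1$. Here one has to argue directly from the basis $\Cbasis'$ of Proposition \ref{p cellular basis u is 1 m even} that the left-representations listed there genuinely exhaust $A$ as a module over itself (so that no simple module is missed), that each non-split extension contributes only its one-dimensional top to $A/\rad(A)$, and that the four tops $\bar{\epsilon}_{+}, \bar{\epsilon}_{-}, \bar{\epsilon}_{1}, \bar{\epsilon}_{2}$ are pairwise distinct; for $m$ odd the analogous bookkeeping is packaged cleanly by the Graham--Lehrer theory, so there the content is already subsumed by Proposition \ref{p cellular basis u is 1 m odd}. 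A minor supporting point throughout is the surjectivity of $\mathbf{k} \mapsto r(\mathbf{k})$ onto the relevant residue values (all of $[n]$, plus $0$ and $m/2$ for $m$ even) independently of $k$, which follows as above from single-entry compositions.
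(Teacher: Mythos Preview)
Your approach is correct and is exactly what the paper intends: the corollary is stated without proof, meant to follow immediately from Propositions \ref{p cellular basis u is 1 m odd} and \ref{p cellular basis u is 1 m even} by reading off the complete list of simple modules and matching their dimensions against those of $\CC W$. One small correction: your claim that the residue map $\mathbf{k} \mapsto r(\mathbf{k})$ surjects onto $\{0, m/2\}$ via single-entry compositions is false for small $k$ (e.g.\ $k=1$, where no $\mathbf{k} \in \Lambda_2$ has residue $0$ or $m/2$), but this surjectivity is not needed---the one-dimensional simples $\bar{\epsilon}_{\pm}$ (and $\bar{\epsilon}_1, \bar{\epsilon}_2$ for $m$ even) are already supplied by $\Lambda_1$ regardless of whether any $\mathbf{k} \in \Lambda_2$ shares their residue.
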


\section*{Acknowledgments}
I am extremely grateful to Anton Geraschenko for discussions that helped clarify many of the ideas in this paper. I thank Sergei Ivanov for his nice proof of Lemma \ref{l analytic independence} and math overflow for coordinating this help.   I am grateful to Monica Vazirani and Ketan Mulmuley for helpful discussions and to James Courtois and John Wood for help typing and typesetting figures.

\bibliographystyle{plain}
\bibliography{mycitations}

\end{document}